\newtheorem{thm}{Theorem}
\newtheorem{thmx}{Theorem}
\newtheorem{prop}[thm]{Proposition}
\newtheorem{cor}[thm]{Corollary}
\newtheorem{lemma}[thm]{Lemma}
\newtheorem{question}[thm]{Question}
\newtheorem{problem}[thm]{Problem}
\theoremstyle{definition}
\newtheorem{defin}[thm]{Definition}
\newtheorem{convention}[thm]{Convention}
\theoremstyle{remark}
\newtheorem{remark}[thm]{Remark}
\newcommand{\en}{\mathbb N}
\newcommand{\qe}{\mathbb Q}
\newcommand{\setsep}{:\;}
\newcommand{\A}{\mathcal A}
\newcommand{\F}{\mathcal F}
\newcommand{\B}{\mathcal B}
\newcommand{\K}{\mathcal K}
\newcommand{\C}{\mathcal C}
\newcommand{\M}{\mathcal M}
\newcommand{\T}{\mathcal T}
\newcommand{\PP}{\mathcal P}
\newcommand{\Id}{\operatorname{Id}}
\newcommand{\Span}{\operatorname{span}}
\newcommand{\closedSpan}{\overline{\operatorname{span}}}
\def \suppt {\operatorname{suppt}}
\def \dens {\operatorname{dens}}
\begin{document}

\title[Characterizations using skeletons and SPRI]{Characterizations of weakly $\mathcal{K}$-analytic and Va\v{s}\'ak spaces using projectional skeletons and separable PRI}

\author[C. Correa]{Claudia Correa}
\address[C. Correa]{Centro de Matem\'atica, Computa\c c\~ ao e Cogni\c c\~ ao, Universidade Federal do ABC, Avenida dos Estados, 5001, Santo Andr\' e, Brasil}
\email{claudiac.mat@gmail.com, claudia.correa@ufabc.edu.br}

\author[M. C\'uth]{Marek C\'uth}
\address[M. C\'uth]{Faculty of Mathematics and Physics, Department of Mathematical Analysis\\
Charles University\\
186 75 Praha 8\\
Czech Republic}
\email{cuth@karlin.mff.cuni.cz}
\address[M. C\'uth]{Institute of Mathematics of the Czech Academy of Sciences, \v{Z}itn\'a 25, 115 67 Prague 1, Czech Republic}

\author[J. Somaglia]{Jacopo Somaglia}
\address[J. Somaglia]{Dipartimento di Matematica ``F. Enriques''\\ Universit\`a degli Studi di Milano\\ Via Cesare Saldini 50\\ 20133 Milano\\ Italy}
\email{jacopo.somaglia@unimi.it}

\keywords{Projectional skeleton, separable projectional resolution of the identity, weakly $\mathcal{K}$-analytic space, Va\v{s}\'ak space, weakly compactly generated space, method of suitable models}
\subjclass[2020]{46B26, 46B20 (primary), and 03C30 (secondary).}

\begin{abstract}We find characterizations of Va\v{s}\'ak spaces and weakly $\mathcal{K}$-analytic spaces using the notions of separable projectional resolution of the identity (SPRI) and of projectional skeleton. This in particular addresses a recent challenge suggested by M. Fabian and V. Montesinos in \cite{FM18}. Our method of proof also gives similar characterizations of WCG spaces and their subspaces (some aspects of which were known, some are new).
Moreover we show that for countably many projectional skeletons $\{\mathfrak{s}_n: n \in \omega\}$ on a Banach space inducing the same set, there exists a projectional skeleton on the space (indexed by ranges of the corresponding projections) which is isomorphic to a subskeleton of each $\mathfrak{s}_n$, $n \in \omega$.

\end{abstract}

\thanks{C. Correa has been partially supported by Funda\c c\~ao de Amparo \`a Pesquisa do Estado de S\~ao Paulo (FAPESP) grants 2018/09797-2 and 2019/08515-6. M.~C\'uth has been supported by Charles University Research program No. UNCE/SCI/023, GA\v{C}R project 19-05271Y and RVO: 67985840.  J.~Somaglia has been supported  by Universit\`a degli Studi di Milano and by Gruppo Nazionale per l'Analisi Matematica, la Probabilit\`a e le loro Applicazioni (GNAMPA) of Istituto Nazionale di Alta Matematica (INdAM), Italy.}

\maketitle

\section{Introduction}

Banach spaces with a projectional skeleton form quite a rich class of Banach spaces (most importantly nonseparable ones), which share some structural properties with separable spaces. Examples of Banach spaces admitting a projectional skeleton come from several areas of functional analysis such as reflexive spaces, WCG spaces, duals of Asplund spaces \cite{kubis09}, preduals of Von Neumann algebras \cite{BHK}, preduals of JBW$^*$ -triples \cite{BHKPP} and several examples of $\C(K)$ spaces \cite{So20}.

One of the key aspects behind the definition of a projectional skeleton is the possibility to build a \emph{projectional resolution of the identity} (PRI) on a Banach space, which is a long sequence of bounded linear projections onto subspaces of smaller densities that enables us to use effectively transfinite induction, see e.g. \cite[Sections 6-7]{FabRedBook} for the definition of PRI and applications. In 1968, D. Amir and J. Lindenstrauss \cite{AL68} proved that a PRI exists in every \emph{weakly compactly generated space} (WCG).
Later there were found several superclasses of WCG spaces, for which a PRI exists as well. Those include the following classes of Banach spaces (where the inclusions are always strict, SWCG denotes the class of subspaces of WCG spaces and WLD the class of weakly Lindel\"of determined spaces)
\[WCG\subset SWCG \subset \text{weakly $\mathcal{K}$-analytic} \subset \text{Va\v{s}\'ak}\subset WLD\subset \text{Plichko}.\]

A space for which the existence of a PRI is known, but is not Plichko is $\C([0,\kappa])$ for regular cardinals $\kappa\geq \omega_2$, see \cite{kal02}. We refer the reader e.g. to monographs \cite{DGZ, FabRedBook, HMVZ}, where more information may be found about the classes of Banach spaces mentioned above.\\ In 2009, W. Kubi\'{s} \cite{kubis09} introduced the notion of projectional skeletons. The class of Banach spaces that admit a projectional skeleton is contained in the class of Banach spaces admitting a PRI and it  contains all Plichko spaces as well as the spaces $\C([0,\kappa])$, for any ordinal $\kappa$. Nowadays, it seems that the class of Banach spaces with a projectional skeleton contains all the important classes of Banach spaces admitting PRI, which enables us to provide a uniform treatment for several known results proved previously for each subclass separately. Quite recently, this feature was somehow precised in \cite[Theorem 1.1]{kal20}, where it was proved that spaces admitting a 1-projectional skeleton form a ``$\PP$-class'' (that is, they admit PRI with a certain property). Consequently, they admit a LUR renorming and a strong Markushevich basis, see  \cite{Ziz84} and \cite[Theorem 5.1]{HMVZ}.

Recently, several authors found characterizations of some classes of Banach spaces in terms of projectional skeletons. Those include characterizations of Plichko spaces, WLD spaces, Asplund spaces, WLD Asplund spaces, WCG spaces and SWCG spaces, see \cite{CF16, CF17, FM18, kubis09}. From the classes mentioned previously, characterizations of weakly $\mathcal{K}$-analytic spaces and of Va\v{s}\'ak spaces in terms of projectional skeleton were not known and actually those two were proposed to tackle as a ``challenge" in \cite{FM18}. We accepted this challenge and solved the problem, which might be considered as the main outcome of our paper.\\
Our characterization of weakly $\mathcal{K}$-analytic Banach spaces is the following (see Sections~\ref{sec:prelim} and \ref{sec:mainResults} for the relevant definitions).

\begin{thmx}\label{thm:main}
Let $X$ be a Banach space and put $\kappa:=\dens(X)$. Then the following conditions are equivalent.
\begin{enumerate}[label = (\roman*)]
    \item\label{it:Xwanalytic} $X$ is a weakly $\mathcal{K}$-analytic space.
    \item\label{it:PSshrinkwanalytic} There exist a projectional skeleton $\mathfrak{s}=(P_s)_{s\in\Gamma}$ on $X$ and a family of non-empty sets $\{A_t\subset B_X\colon t\in\omega^{<\omega}\}$ satisfying the following conditions:
\begin{enumerate}
    \item\label{it:lindenscountsupportwanalytic} the set $\A := \bigcup_{\sigma\in \omega^{\omega}}\bigcap_{i\ge1} A_{\sigma|i}$ is linearly dense in $X$ and countably supports $X^*$; moreover $A_\emptyset=\A$ and $A_t \subset \A$, for every $t \in \omega^{<\omega}$;
    \item for every $\varepsilon >0$, $x^*\in X^*$, and $\sigma\in \omega^{\omega}$, there exists $i\in \omega$ such that  $\mathfrak{s}$ is $(A_{\sigma|i},\varepsilon)$-shrinking in $x^*$.
\end{enumerate}
\item\label{it:SPRIwanalytic} There exist a SPRI $(Q_{\alpha})_{\alpha\leq \kappa}$ in $X$ and a family of non-empty sets $\{A_t \subset B_X\colon t \in\omega^{<\omega}\}$ satisfying the following conditions:
\begin{enumerate}[label=(3\alph*)]
\item the set $\A := \bigcup_{\sigma\in\omega^{\omega}}\bigcap_{i\ge 1} A_{\sigma|i}$ is linearly dense in $X$ and countably supports $X^*$; moreover $A_\emptyset=\A$ and $A_t \subset \A$, for every $t \in \omega^{<\omega}$;
    \item\label{it:nicePRIwAnalytic} for every $x\in\A$ it holds that $\{Q_\alpha(x)\colon \alpha\leq \kappa\}\subset \{0,x\}$;
    \item\label{it: PRIAjshrinkwAnalytic} for every $\varepsilon>0$, $x^*\in X^*$, and $\sigma\in \omega^{\omega}$ there exists $i\in \omega$ such that $(Q_\alpha)_{\alpha\leq\kappa}$ is $(A_{\sigma|i},\varepsilon)$-shrinking in $x^*$.
\end{enumerate}
\end{enumerate}
\end{thmx}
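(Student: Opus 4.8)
The plan is to establish the cycle $(i)\Rightarrow(iii)\Rightarrow(ii)\Rightarrow(i)$, so that the analytic structure on $(X,w)$ is first converted into a SPRI carrying the listed data, the SPRI is then used to manufacture a projectional skeleton with the same Souslin scheme, and finally the skeleton is used to recover weak $\K$-analyticity. Throughout, the family $\{A_t:t\in\omega^{<\omega}\}$ serves as a discrete trace of a Souslin scheme of weakly compact sets: recall that $(X,w)$ is weakly $\K$-analytic precisely when it admits an upper semi-continuous compact-valued (usco) map $\Phi\colon\omega^\omega\to\K(X,w)$ with $X=\bigcup_{\sigma}\Phi(\sigma)$, equivalently a Souslin scheme of weakly compact sets with weakly compact branch intersections.

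For $(i)\Rightarrow(iii)$, which I expect to be the heart of the argument, I would fix such a map $\Phi$ together with a linearly dense selection of its fibers and build the SPRI by the method of suitable models. Concretely, one runs a transfinite induction along a continuous chain of elementary submodels $(M_\alpha)_{\alpha\le\kappa}$ of a large enough structure containing $X$, $X^*$, $\Phi$ and a fixed projectional generator; letting $Q_\alpha$ be the canonical projection associated with $M_\alpha$ yields a SPRI $(Q_\alpha)_{\alpha\le\kappa}$. The sets $A_t$ are then read off from the selection of the fibers $\Phi(\sigma|i)$, so that $\A=\bigcup_\sigma\bigcap_{i\ge1}A_{\sigma|i}$ is linearly dense and countably supports $X^*$, giving (3a). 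Arranging the enumeration so that each element of $\A$ enters the chain at a single successor step forces $Q_\alpha(x)\in\{0,x\}$ on $\A$, i.e.\ (3b); and the usco property of $\Phi$ -- the fibers $\Phi(\sigma|i)$ shrinking to the weakly compact set $\Phi(\sigma)$ -- is exactly what delivers, along each branch $\sigma$, an index $i$ for which the tails $(\Id-Q_\alpha)^*x^*$ are uniformly small on $A_{\sigma|i}$, i.e.\ (3c). The main obstacle is precisely this simultaneous bookkeeping: synchronising the Souslin combinatorics indexed by $\omega^{<\omega}$ with the ordinal induction indexed by $\alpha\le\kappa$ while keeping (3a)--(3c) intact.

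The implication $(iii)\Rightarrow(ii)$ is where condition (3b) pays off. Since each $x\in\A$ satisfies $Q_\alpha(x)\in\{0,x\}$, it has a well-defined coordinate $\alpha_x\le\kappa$ at which it is first fixed; I would then index a skeleton by a suitable cofinal family $\Gamma$ of countable subsets $s\subseteq[0,\kappa]$ and let $P_s$ be the projection onto $\closedSpan\{(Q_{\alpha+1}-Q_\alpha)X:\alpha\in s\}$. Thus $P_s$ acts as a genuine coordinate projection on $\A$ (namely $P_sx=x$ if $\alpha_x\in s$ and $P_sx=0$ otherwise), the skeleton axioms follow from those of the SPRI, and the $(A_{\sigma|i},\varepsilon)$-shrinking of $(Q_\alpha)_{\alpha\le\kappa}$ transfers verbatim to $\mathfrak{s}$ since both are governed by the same tail functionals; this preserves (3a) and reproduces clauses (a) and (b) of (ii). Finally, for $(ii)\Rightarrow(i)$ I would build an usco map $\Phi\colon\omega^\omega\to\K(X,w)$ by letting $\Phi(\sigma)$ be the weak closure of the bounded combinations arising from $\bigcap_{i\ge1}A_{\sigma|i}$: the shrinking clause guarantees that for each $x^*$ and each $\varepsilon>0$ a finite branch already controls $x^*$ on the fiber, so that each $\Phi(\sigma)$ is weakly compact (angelicity together with the uniform control of coordinates), while linear density and the countable support of $X^*$ by $\A$ show that $X=\bigcup_\sigma\Phi(\sigma)$ and that $\Phi$ is upper semi-continuous, yielding weak $\K$-analyticity.
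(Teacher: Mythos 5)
Your cycle $(i)\Rightarrow(iii)\Rightarrow(ii)\Rightarrow(i)$ reverses the paper's $(i)\Rightarrow(ii)\Rightarrow(iii)\Rightarrow(i)$, and the two steps you treat as routine are exactly where the content lies. The most serious gap is your $(iii)\Rightarrow(ii)$. A SPRI, as defined here, carries no uniform bound on $\Vert Q_\alpha\Vert$ and no continuity at limit ordinals, so ``the projection onto $\closedSpan\{(Q_{\alpha+1}-Q_\alpha)[X]\colon \alpha\in s\}$'' for an arbitrary countable $s\subset[0,\kappa]$ is neither canonically defined nor obviously a bounded operator: the natural candidate $\sum_{\alpha\in s}(Q_{\alpha+1}-Q_\alpha)$ need not converge, and a complemented range does not determine a projection. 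Even granting the operators $P_s$, the claim that shrinkingness transfers ``verbatim'' is unjustified: shrinkingness of $\mathfrak{s}$ quantifies over all increasing sequences $s_1\le s_2\le\cdots$ in $\Gamma$, whose associated ordinal sets interleave arbitrarily, whereas hypothesis \ref{it: PRIAjshrinkwAnalytic} only controls increasing sequences of ordinals. The paper never proves $(iii)\Rightarrow(ii)$ directly; it closes the cycle through $(iii)\Rightarrow(i)$, by feeding the sets $B_{n^\smallfrown t}=A_t\cap\{v_n^\alpha\colon\alpha<\kappa\}$ (built from linearly dense sequences in the difference spaces $(Q_{\alpha+1}-Q_\alpha)[X]$) into the combinatorial characterization of Lemma~\ref{l:theorem4-FGMZ}, and then $(i)\Rightarrow(ii)$, where the skeleton comes from Plichkoness via Corollary~\ref{cor:plichko} and shrinkingness is read off the finiteness condition of that lemma.

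Your $(ii)\Rightarrow(i)$ has a gap of the same nature: the weak compactness of $\Phi(\sigma)$ is asserted via ``angelicity together with the uniform control of coordinates'' without an argument, and the real difficulty is to convert the shrinking property of a skeleton indexed by an abstract $\Gamma$ into the finiteness condition $\vert\{x\in A_{\sigma|i}\colon\vert x^*(x)\vert>\varepsilon\}\vert<\omega$. The mechanism the paper uses for this is condition \ref{it:nicePRIwAnalytic}: each generator $v_n^\alpha$ lies in a single difference space, so infinitely many generators with $\vert x^*\vert>\varepsilon$ produce a strictly increasing \emph{ordinal} sequence along which shrinking visibly fails. Condition \ref{it:PSshrinkwanalytic} provides no such coordinate --- the offending elements of $A_{\sigma|i}$ could all sit inside a single separable $P_{s_0}[X]$ and be fixed by every $P_s$ with $s\ge s_0$, leaving no increasing sequence in $\Gamma$ to detect them. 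This is precisely why the paper's hard step is $(ii)\Rightarrow(iii)$ (Theorem~\ref{thm:constructionSPRI}, resting on Proposition~\ref{prop:pri} and a transfinite induction on density), after which $(iii)\Rightarrow(i)$ is purely combinatorial. Your $(i)\Rightarrow(iii)$ is plausible in outline --- it amounts to the paper's $(i)\Rightarrow(ii)$ plus the observation that canonical projections along a continuous chain of models converge at limit stages --- but as written the proposal does not close the cycle.
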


It is worth mentioning that there is a rich theory concerning the class of weakly $\K$-analytic spaces, see e.g. \cite[Section 4.4]{FabRedBook} and references therein. From more recent contributions we should mention the paper \cite{AAM08} where the authors solved a long standing open problem by M. Talagrand by finding a weakly $\mathcal{K}$-analytic space $X$ which is not $F_{\sigma\delta}$ in $(X^{**},w^*)$. This result motivated even some very recent works, see e.g. \cite{KK18}.

Our characterization of Va\v{s}\'ak spaces is the following.

\begin{thmx}\label{thm:Vasak}
Let $X$ be a Banach space and put $\kappa:=\dens(X)$. Then the following conditions are equivalent.
\begin{enumerate}[label = (\roman*)]
    \item\label{it:Xvasak} $X$ is a Va\v{s}\'{a}k space.
    \item\label{it:PSshrinkvasak} There exist a projectional skeleton $\mathfrak{s}=(P_s)_{s\in\Gamma}$ on $X$ and a family $\{A_n \subset B_X\colon n \in \omega\}$ of non-empty sets satisfying the following conditions:
\begin{enumerate}
    \item\label{it:lindenscountsupportvasak}  $\A := \bigcup_{n\in\omega} A_n$ is linearly dense in $X$ and countably supports $X^*$;
    \item\label{it:epsilon-x-x*}for every $\varepsilon >0$ and every $x^*\in X^*$, there exists $N\subset \omega$ such that $\A=\bigcup_{j\in N}A_j$ and $\mathfrak{s}$ is $(A_j,\varepsilon)$-shrinking in $x^*$, for every $j\in N$.
\end{enumerate}
\item\label{it:SPRIvasak} There exist a SPRI $(Q_{\alpha})_{\alpha\leq \kappa}$ in $X$ and a family $\{A_n\subset B_X \colon n \in \omega\}$ of non-empty sets satisfying the following conditions:
\begin{enumerate}[label=(3\alph*)]
\item $\A := \bigcup_{n\in\omega} A_n$ is linearly dense in $X$ and countably supports $X^*$;
\item\label{it:nicePRIvasak} for every $x\in\A$ it holds that $\{Q_\alpha(x)\colon \alpha\leq \kappa\}\subset \{0,x\}$;
\item\label{it: PRIAjshrinkvasak} for every $\varepsilon>0$ and $x^*\in X^*$, there exists $N\subset \omega$ such that $\A=\bigcup_{n\in N}A_{n}$ and $(Q_\alpha)_{\alpha\leq\kappa}$ is $(A_j,\varepsilon)$-shrinking in $x^*$, for every $j\in N$.
\end{enumerate}
\end{enumerate}
\end{thmx}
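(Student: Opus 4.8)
The plan is to prove Theorem~\ref{thm:Vasak} by the same machinery that underlies Theorem~\ref{thm:main}, exploiting that, combinatorially, the Va\v{s}\'ak property is the ``countably determined'' shadow of weak $\mathcal K$-analyticity: the Suslin scheme $\bigcup_{\sigma\in\omega^\omega}\bigcap_{i\ge1}A_{\sigma|i}$ indexed by $\omega^{<\omega}$ degenerates to a bare countable union $\bigcup_{n\in\omega}A_n$, and the quantifier pattern ``for every $\sigma$ there is $i$'' is replaced by ``there is a countable subfamily $\{A_j\}_{j\in N}$ that still covers $\A$ and on which the skeleton shrinks $x^*$''. I would run the cycle \ref{it:Xvasak} $\Rightarrow$ \ref{it:PSshrinkvasak} $\Rightarrow$ \ref{it:SPRIvasak} $\Rightarrow$ \ref{it:Xvasak}, so that the skeleton is built first from the topological hypothesis, the rigid chain (the SPRI) is extracted from it, and the topological property is finally recovered from the chain, where the clean support condition~\ref{it:nicePRIvasak} makes the recovery most transparent.

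For the construction \ref{it:Xvasak} $\Rightarrow$ \ref{it:PSshrinkvasak} I would use the method of suitable models. Fix a linearly dense $D\subset B_X$ and a countable family $\{K_n:n\in\omega\}$ of weak$^*$-compact subsets of $X^{**}$ witnessing the Va\v{s}\'ak property, namely for every $x\in X$ and $u\in X^{**}\setminus X$ some $K_n$ contains $x$ but misses $u$. Building a projectional skeleton indexed by elementary submodels $M$ that absorb $D$ and all the $K_n$, and setting $P_M$ to be the canonical projection attached to $M$, one obtains a skeleton whose induced set $\A$ is linearly dense and countably supports $X^*$, giving~\ref{it:lindenscountsupportvasak}; the sets $A_n$ are then defined from the traces of $K_n$ on the generators lying in the models. (If it is more convenient to adapt the construction to one $K_n$ at a time, one can instead produce countably many skeletons inducing the same $\A$ and merge them into a single skeleton by the amalgamation result recorded in the abstract.) The decisive clause is~\ref{it:epsilon-x-x*}: given $\varepsilon$ and $x^*$, countable determination selects, for each generator, an index $n$ recording the behaviour of $x^*$, and elementarity forces $P_M^*x^*$ to approximate $x^*$ within $\varepsilon$ on exactly the matching pieces $A_j$; collecting these indices yields the set $N$ with $\A=\bigcup_{j\in N}A_j$ and the required shrinking on each $A_j$.

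The implication \ref{it:PSshrinkvasak} $\Rightarrow$ \ref{it:SPRIvasak} is the extraction of a SPRI from the skeleton. I would choose an increasing continuous chain in the directed index set $\Gamma$ that is cofinal enough and, crucially, \emph{support-respecting}, so that each generator of $\A$ (which has countable support with respect to the dual projections) is either already captured or entirely outside each link; the associated projections $(Q_\alpha)_{\alpha\le\kappa}$ then form a SPRI for which~\ref{it:nicePRIvasak} holds by construction, while~\ref{it:lindenscountsupportvasak} is inherited verbatim and~\ref{it:epsilon-x-x*} passes to~\ref{it: PRIAjshrinkvasak} because every $Q_\alpha$ factors through some $P_s$. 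For the closing verification \ref{it:SPRIvasak} $\Rightarrow$ \ref{it:Xvasak} I would reconstruct a weak$^*$-compact separating family in $X^{**}$: clause~\ref{it:lindenscountsupportvasak} already makes $X$ weakly Lindel\"of determined, and for each $n$ and each rational $\varepsilon>0$ one forms the weak$^*$-closure in $X^{**}$ of a set assembled from $A_n$ together with the shrinking data; clause~\ref{it: PRIAjshrinkvasak} guarantees that these countably many sets separate every $x\in X$ from every $u\in X^{**}\setminus X$, which is exactly the definition of a Va\v{s}\'ak space.

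I expect the main obstacle to be clause~\ref{it:epsilon-x-x*} (equivalently~\ref{it: PRIAjshrinkvasak}) in the construction step: the shrinking has to be produced for a \emph{single} fixed skeleton uniformly over all $x^*\in X^*$, even though the admissible cover $\A=\bigcup_{j\in N}A_j$ is permitted to depend on $\varepsilon$ and $x^*$. This is exactly where weak $\mathcal K$-analyticity and the Va\v{s}\'ak property part ways, and the difficulty is to make the countable-determination selection interact correctly with the support structure forced by~\ref{it:nicePRIvasak}, matching the two along the same chain of models rather than along a cofinal net that varies with $x^*$. The remaining implications are either routine transcriptions of the weak $\mathcal K$-analytic argument from Theorem~\ref{thm:main} or standard skeleton-to-SPRI transfer arguments.
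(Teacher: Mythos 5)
Your cycle of implications matches the paper's, but two of the three steps as you describe them have genuine gaps. For \ref{it:Xvasak}$\Rightarrow$\ref{it:PSshrinkvasak} you propose to work directly with the weak$^*$-compact sets $K_n$ witnessing the Va\v{s}\'ak property and to extract the shrinking clause from ``elementarity''; this is not a proof, and the paper does not attempt it. Instead it first passes through the combinatorial characterization of Va\v{s}\'ak spaces from \cite[Theorem~3]{FGMZ} (Lemma~\ref{l:theorem3-FGMZ}): there are $\A\subset B_X$ linearly dense and sets $A_n\subset\A$ such that for all $\varepsilon$, $x^*$ and $x\in\A$ some $A_n$ contains $x$ and meets $\{|x^*|>\varepsilon\}$ only finitely. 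Once one also arranges, via Corollary~\ref{cor:plichko}, that $P_s(x)\in\{0,x\}$ for $x\in\A$, the quantity $|P_{s_k}^*(x^*)(x)-P_s^*(x^*)(x)|$ is either $0$ or $|x^*(x)|$, so shrinking on $A_{n(x)}$ is immediate from that finiteness; the set $N=\{n(x)\colon x\in\A\}$ then covers $\A$. Without this lemma (or a reproof of it) your first and last implications are unsupported; the same lemma is what the paper verifies in \ref{it:SPRIvasak}$\Rightarrow$\ref{it:Xvasak}, using the countable linearly dense sets $\{v_n^\alpha\}\subset (Q_{\alpha+1}-Q_\alpha)[\A]\setminus\{0\}$ and the sets $B_{n,m}=A_m\cap\{v_n^\alpha\colon\alpha<\kappa\}$, rather than by assembling weak$^*$-compact sets in $X^{**}$ by hand.

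The more serious gap is \ref{it:PSshrinkvasak}$\Rightarrow$\ref{it:SPRIvasak}, which you dismiss as a ``standard skeleton-to-SPRI transfer'' along a cofinal chain in $\Gamma$, with shrinking passing ``because every $Q_\alpha$ factors through some $P_s$''. This cannot work: $\Gamma$ is only $\sigma$-complete, so an uncountable increasing chain need not have suprema, and every $P_s$ has separable range, so no continuous chain of such projections can exhaust a nonseparable $X$. The paper's actual construction (Theorem~\ref{thm:constructionSPRI}) is a transfinite induction on $\dens X$: Proposition~\ref{prop:pri} produces, from a chain of suitable models of growing cardinality, limit projections $R_\alpha$ whose difference spaces $(R_{\alpha+1}-R_\alpha)[X]$ are in general nonseparable but carry subskeletons inheriting the hypotheses; the induction hypothesis gives SPRI on these pieces, and Lemma~\ref{lem:spricase} glues them lexicographically. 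The key point \ref{it:AeShrink} --- that $(A,\varepsilon)$-shrinking survives the passage from the $P_s$ to the limit projections $R_\alpha$ --- is proved by a nontrivial interleaving argument (recursively choosing $s_{2j+1}\in M_{\alpha_{k_j}}$ with $P_{s_{2j+1}}x_{k_j}=0$ and $s_{2j+2}$ with $P_{s_{2j+2}}x_{k_j}=x_{k_j}$ to contradict shrinkingness of $\mathfrak s$). This, not clause \ref{it:epsilon-x-x*} of the construction step, is the obstacle the authors single out as the reason the approach of \cite{FM18} fails here, so your assessment of where the difficulty lies is inverted.
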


Note that Va\v{s}\'ak spaces are known also under the name \emph{weakly countably determined spaces}. We refer the interested reader to \cite{KalVasak} for more information and recent contributions to the study of this class of spaces.

Our characterizations were inspired by the papers \cite{FGMZ} and \cite{FM18}. In \cite{FGMZ}, characterizations of these classes were found but the notion of a skeleton is not used, since this notion was discovered only years later and ``shrinkingness'' is replaced with a certain combinatorial property. This probably inspired M. Fabian and V. Montesinos \cite{FM18} to characterize WCG and SWCG spaces using the notion of a projectional skeleton, where the combinatorial conditions from \cite{FGMZ} were replaced by the notion of ``shrinkingness''.
The hard part in the proofs of the characterizations presented in \cite{FM18} was to show that a certain property of a projectional skeleton is inherited by the PRI that comes from this skeleton; then a transfinite induction argument was used. However, when dealing with Va\v{s}\'ak spaces and weakly $\mathcal{K}$-analytic spaces, even after several attempts we were not able to use this approach (due to various technical reasons) and we were forced to work differently. Namely, we prove that a certain property of a skeleton is actually inherited also by the SPRI that comes from this skeleton, which is essential in what is contained in the proof of the implication \ref{it:PSshrinkwanalytic}$\Rightarrow$\ref{it:SPRIwanalytic} in Theorems~\ref{thm:main} and ~\ref{thm:Vasak}. As a consequence, we do not only have characterizations of Va\v s\'ak spaces and weakly $\mathcal K$-analytic spaces using the notion of a projectional skeleton, but also using SPRI, which seem to be of independent interest. The techniques developed enable also to provide similar characterizations of WCG and SWCG spaces using the notion of SPRI, which are new as well.

We would like to stress out that condition \ref{it:PSshrinkwanalytic} from Theorems~\ref{thm:main} and \ref{thm:Vasak} is not that much dependent on a particular projectional skeleton. This is witnessed by the following consequence of Theorem~\ref{thm:subskeletons}, which seems to be of an independent interest and which might be considered as one of the main outcomes of this paper as well.

\begin{thmx}\label{thm:main1}
Let $X$ be a Banach space and $\{\mathfrak{s}_n\colon n \in \omega\}$ be a countable family of projectional skeletons on $X$ inducing the same set $D\subset X^*$. Then there exists a simple projectional skeleton on $X$ which is isomorphic to a subskeleton of $\mathfrak{s}_n$, for every $n\in\omega$.
\end{thmx}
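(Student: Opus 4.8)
The plan is to reduce everything to two facts: that a projection in a skeleton inducing $D$ is determined by its range together with $D$, and that the ranges of a skeleton form a rich family of separable subspaces. First I would record the \emph{uniqueness} statement: if $\mathfrak{s}=(P_s)_{s\in\Gamma}$ and $\mathfrak{t}=(R_t)_{t\in\Lambda}$ are projectional skeletons on $X$ both inducing $D$, and a separable subspace $V\subseteq X$ occurs as a range $P_s(X)=R_t(X)=V$ in both, then $P_s=R_t$. The mechanism is that the canonical projection attached to a range $V$ has its kernel pinned down by $D$: one has $P_V x\in V$ and $\langle d,P_Vx\rangle=\langle d,x\rangle$ for every $d$ in the piece of $D$ supported on $V$, so that $\ker P_V$ depends only on the pair $(V,D)$ and not on the ambient skeleton. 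In particular, for a range $V$ common to all the $\mathfrak{s}_n$ the projections coincide, and I may write $P_V$ for this common projection without ambiguity.

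Second, I would pass to the families of ranges. For each $n$ the collection $\mathcal{R}_n:=\{P_s(X)\colon s\in\Gamma_n\}$ of ranges of $\mathfrak{s}_n$ is a \emph{rich family}: it is cofinal in the separable subspaces of $X$ (the skeleton covers $X$) and closed under countable increasing unions (by $\sigma$-completeness of $\Gamma_n$). Since a countable intersection of rich families is again rich --- a standard lemma in the method of suitable models --- the family $\mathcal{R}:=\bigcap_{n\in\omega}\mathcal{R}_n$ is rich. Every $V\in\mathcal{R}$ is then simultaneously a range of all the $\mathfrak{s}_n$, and hence carries the common projection $P_V$ from the previous step.

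Finally I would assemble the skeleton and identify it. Ordering $\mathcal{R}$ by inclusion, richness delivers exactly the two structural axioms: cofinality makes $\bigcup_{V\in\mathcal{R}}V$ dense in $X$, and closure under increasing unions yields both $\sigma$-completeness of $(\mathcal{R},\subseteq)$ and the required continuity of the projections along increasing chains (the range of the supremum of $V_1\subseteq V_2\subseteq\cdots$ is $\overline{\bigcup_k V_k}$). The compatibility relations $P_U P_V=P_V P_U=P_U$ for $U\subseteq V$ in $\mathcal{R}$ follow from the $D$-description of the kernels ($U\subseteq V$ forces $\ker P_V\subseteq\ker P_U$), so $(P_V)_{V\in\mathcal{R}}$ is a genuine projectional skeleton, and it is \emph{simple} because it is indexed by its own ranges. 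To see that it is isomorphic to a subskeleton of each $\mathfrak{s}_n$, note that $\mathcal{R}$ is a rich subfamily of $\mathcal{R}_n$ and that, by uniqueness, the projection of $\mathfrak{s}_n$ with range $V\in\mathcal{R}$ is precisely $P_V$; thus the range map of $\mathfrak{s}_n$ realizes $\mathcal{R}\hookrightarrow\mathcal{R}_n$ as the order isomorphism onto a cofinal, $\sigma$-closed subskeleton. The hard part will be the compatibility direction $P_U P_V=P_U$ (as opposed to the easy $P_V P_U=P_U$), since a set-theoretic inclusion $U\subseteq V$ of ranges need not reflect the abstract index order inside a single $\mathfrak{s}_n$; this is exactly where the kernel characterization forced by $D$ --- rather than any order information internal to one skeleton --- must be invoked.
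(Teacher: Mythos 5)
Your first step --- that a projection occurring in a skeleton inducing $D$ is determined by its range together with $D$ --- is false, and the rest of the construction rests on it. Take $X=\ell_2(\omega_1)$ and let $\mathfrak{s}_0=(P_A)_{A\in[\omega_1]^{\le\omega}}$ be the standard skeleton of orthogonal projections $P_Ax=x\cdot 1_A$; it induces $D=X^*$. Now enlarge the index set by one element $*$ with $*\le A$ iff $\{0,1\}\subset A$, and set $P_*x:=(x_0+x_1)e_0$, the projection onto $V:=\Span\{e_0\}$ along $\ker(e_0^*+e_1^*)$. All the axioms of a projectional skeleton are easily checked for the enlarged family, and the induced set is still $X^*$. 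Thus $V$ is simultaneously the range of $P_*$ and of the orthogonal projection $P_{\{0\}}$ --- two different projections, both members of skeletons (indeed of one and the same skeleton) inducing the same $D$. There is no well-defined ``piece of $D$ supported on $V$'': the kernel is the preannihilator of a countable set $D_0\subset D$ that must be chosen so that $V$ separates the points of $\overline{D_0}^{\,w^*}$, and this $D_0$ is not recoverable from $V$ alone. This is precisely why the paper works with suitable models: a model $M$ carries the range $X_M=\overline{X\cap M}$ and the kernel $(D\cap M)_\perp$ as a matched pair (Lemma~\ref{l:projection}), and by placing all the $\mathfrak{s}_n$ inside $M$ the canonical projection $P_M$ is shown to coincide with $P^n_{\sup(\Gamma_n\cap M)}$, an actual member of each skeleton (Proposition~\ref{p:characterizationPSkeletonNotCountableModels} and Theorem~\ref{thm:subskeletons}).

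Two later steps would also need repair even if uniqueness held. First, the family of ranges $\mathcal{R}_n$ need not be closed under closures of increasing unions: an inclusion of ranges does not reflect to an inequality of indices, so $\sigma$-completeness of $\Gamma_n$ only yields that $\overline{\bigcup_k V_k}$ is \emph{contained in} some range, not that it \emph{is} one; hence the richness of $\mathcal{R}_n$, and therefore of $\bigcap_n\mathcal{R}_n$, is unproved. (The paper instead proves richness for $\{X_M\colon M\in\M\}$, using Toru\'{n}czyk's theorem to show that $M\mapsto X_M$ is an order isomorphism.) Second, ``isomorphic to a subskeleton of $\mathfrak{s}_n$'' requires exhibiting a $\sigma$-closed set of \emph{indices} in $\Gamma_n$ realizing your projections, not merely an inclusion of range families; since several indices can share a range while carrying different projections, this again demands a coherent selection, which the model-theoretic construction supplies and your argument does not.
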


Note that Theorem~\ref{thm:main1} implies that some properties of projectional skeletons are not dependent on the particular skeleton but rather on the set they induce (the fact that it is even simple is just an additional feature related to the concept studied already in \cite{CK14}). A result of similar purpose was proved already in \cite[Lemma 3.5]{kal20}, but our Theorem~\ref{thm:main1} is stronger and easier for applications. It also generalizes \cite[Theorem 4.1]{CK14}, as it shows that up to passing to a subskeleton, we may assume that any projectional skeleton is \emph{simple}, that is, that it can be indexed by the ranges of the corresponding projections. We refer the reader to Remark~\ref{ourresult-generalizes} for some more comments concerning the novelty of this result.

Now we describe the content of each section. Section~\ref{sec:prelim} contains basic notations and some preliminary results.
In Section~\ref{sec:models} we deal with the canonical construction of projections induced by projectional skeletons. To this aim, we use the method of suitable models. The main outcome here is the proof of Theorem~\ref{thm:main1}. Some technical constructions used in Section~\ref{sec:mainResults} are proved as well; they are concentrated in Subsection~\ref{subsec:usedElsewhere}.
In Section~\ref{sec:mainResults} we prove our main results, that is, characterizations of WCG, SWCG, weakly $\mathcal{K}$-analytic and Va\v{s}\'ak spaces using the notions of projectional skeleton and SPRI. In Section~\ref{sec:questions} we suggest directions for a possible further research.

\section{Notation and Preliminaries}\label{sec:prelim}

Given a set $A$ by $\PP(A)$ we denote the collection of its subsets.

We use standard notation in Banach space theory as can be found in \cite{FHHMZ}. By a projection on a Banach space $X$ we always understand a linear bounded mapping $P\colon X\rightarrow X$ such that $P\circ P=P$. Let us recall the notion of a projectional skeleton on a Banach space. Let $(\Gamma,\leq)$ be an up-directed partially ordered set. We say that a sequence $(s_n)_{n\in\omega}$ of elements of $\Gamma$ is increasing if $s_n\leq s_{n+1}$, for every $n\in\omega$. We say that  $\Gamma$ is \emph{$\sigma$-complete} if for every increasing sequence $(s_n)_{n\in\omega}$ in $\Gamma$ there exists $\sup_{n \in \omega} s_n$ in $\Gamma$. If $\Gamma$ is $\sigma$-complete, then we say that a subset $\Gamma'$ of $\Gamma$ is \emph{$\sigma$-closed} in $\Gamma$ if for every increasing sequence $(s_n)_{n\in\omega}$ in $\Gamma'$, it holds that $\sup_{n \in \omega} s_n \in \Gamma'$; finally, given $A\subset \Gamma$ we denote by $A_\sigma$ the smallest $\sigma$-closed subset of $\Gamma$ containing $A$.

\begin{defin}\label{def:C-ProjectionalSkeleton}
Let $X$ be a Banach space. A \emph{projectional skeleton} on $X$ is a family of bounded linear projections $\mathfrak{s}=(P_s)_{s \in \Gamma}$ on $X$ indexed by a partially ordered, up-directed, and $\sigma$-complete set $\Gamma$ satisfying:
\begin{enumerate}
    \item\label{it:separable}$P_s[X]$ is separable, for every $s \in \Gamma$;
    \item\label{it:image-increasing} if $s\le t$, then $P_s=P_s \circ P_t=P_t \circ P_s$;
    \item\label{it:continuity-skeleton} if $(s_n)_{n \in \omega}$ is an increasing sequence in $\Gamma$ and $s:=\sup_{n \in \omega} s_n$, then $P_s[X]=\overline{\bigcup_{n \in \omega}P_{s_n}[X]}$;
    \item\label{it:covers-X} $X=\bigcup_{s \in \Gamma} P_s[X]$.
\end{enumerate}
We put $D(\mathfrak{s}):=\bigcup_{s \in \Gamma}P_s^*[X^*]$ and we say that $D(\mathfrak{s})$ is the \emph{set induced by $\mathfrak{s}$}.
\end{defin}

We introduce also the following definition, which we use further as well.

\begin{defin}\label{def:subskeleton-isomorphic}
    Let $X$ be a Banach space and let $\mathfrak{s} = (P_s)_{s\in \Gamma}$ be a projectional skeleton on $X$.
    \begin{itemize}
    \item We say that $\mathfrak{s}':=(P_s)_{s \in \Gamma'}$ is a \emph{projectional subskeleton} of $\mathfrak{s}$ if $\Gamma'$ is a $\sigma$-closed subset of $\Gamma$ and $\mathfrak{s}'$ is a projectional skeleton on $X$.
    \item We say that a projectional skeleton $(Q_\lambda)_{\lambda \in \Lambda}$ on $X$ is \emph{isomorphic} to $\mathfrak{s}$ if there exists an order-isomorphism $\phi\colon\Lambda \to \Gamma$ such that $Q_\lambda=P_{\phi(\lambda)}$, for every $\lambda \in \Lambda$.\\
    (Recall that by an order-isomorphism we understand a bijection preserving the ordering, that is, $\phi(s)\leq \phi(t)$ iff $s\leq t$)
\end{itemize}

\end{defin}

Note that, given $\mathfrak{s}'$ a subskeleton of $\mathfrak{s}$, we have $D(\mathfrak{s}')=D(\mathfrak{s})$. Indeed, it is clear that $D(\mathfrak{s}') \subset D(\mathfrak{s})$ and thus it follows from \cite[Corollary~19]{kubis09} that $D(\mathfrak{s}')=D(\mathfrak{s})$.

It follows from \cite[Proposition~9 and Lemma~10]{kubis09} that given a projectional skeleton $\mathfrak{s}=(P_s)_{s \in \Gamma}$ on a Banach space $X$, up to passing to a subskeleton, we can assume that there exists $C \ge 1$ such that $\Vert P_s \Vert \le C$, for every $s \in \Gamma$ and the following stronger version of condition \eqref{it:continuity-skeleton} holds:
\begin{enumerate}
    \item[(3')] If $(s_n)_{n \in \omega}$ is an increasing sequence in $\Gamma$ and $s=\sup_{n \in \omega}s_n$, then $\lim_{n \to \infty}P_{s_n}(x)=P_s(x)$, for every $x \in X$.
\end{enumerate}

In the next lemma we recall one aspect of the strong relationship between projectional skeletons and norming subspaces. Given a Banach space $X$ and a real number $C \ge 1$, a subset $D$ of $X^*$ is said to be \emph{$C$-norming} if for every $x \in X$ it holds that
\[\Vert x \Vert \le C \sup\{ \vert d(x)\vert/\Vert d\Vert\colon d \in D \setminus \{0\}\}.\]

\begin{lemma}[{\cite[Lemma~1.3]{kal20}}]\label{D-norming}
If $\mathfrak{s}$ is a projectional skeleton on a Banach space $X$, then there exists $C \ge 1$ such that $D(\mathfrak{s})$ is a $C$-norming subspace of $X^*$.
\end{lemma}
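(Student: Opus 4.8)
The plan is to reduce to the case of uniformly bounded projections and then to combine the Hahn--Banach theorem with the algebraic identities defining a skeleton. By the remark following Definition~\ref{def:subskeleton-isomorphic}, the induced set $D(\mathfrak{s})$ is unchanged when we pass to a subskeleton; and by \cite[Proposition~9 and Lemma~10]{kubis09} we may pass to a subskeleton for which there is some $C\ge 1$ with $\Vert P_s\Vert\le C$ for every index $s$. It therefore suffices to establish the norming inequality with this constant $C$ for the relabelled skeleton, which I henceforth denote again by $\mathfrak{s}=(P_s)_{s\in\Gamma}$.

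First I would check that $D(\mathfrak{s})$ is a linear subspace. If $s\le t$, then condition~\eqref{it:image-increasing} gives $P_s=P_s\circ P_t$, whence $P_s^*=P_t^*\circ P_s^*$ and consequently $P_s^*[X^*]\subset P_t^*[X^*]$. Since $\Gamma$ is up-directed, $D(\mathfrak{s})=\bigcup_{s\in\Gamma}P_s^*[X^*]$ is an up-directed union of the linear subspaces $P_s^*[X^*]$, and hence is itself a linear subspace of $X^*$.

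For the norming estimate, fix $x\in X\setminus\{0\}$. By condition~\eqref{it:covers-X} there is $s\in\Gamma$ with $x\in P_s[X]$, that is $P_s(x)=x$. Choose, by Hahn--Banach, a functional $x^*\in X^*$ with $\Vert x^*\Vert=1$ and $x^*(x)=\Vert x\Vert$, and set $d:=P_s^*(x^*)\in D(\mathfrak{s})$. Then
\[
d(x)=x^*(P_s(x))=x^*(x)=\Vert x\Vert\qquad\text{and}\qquad \Vert d\Vert\le\Vert P_s^*\Vert\,\Vert x^*\Vert=\Vert P_s\Vert\le C,
\]
so that $d\neq 0$ and $|d(x)|/\Vert d\Vert\ge\Vert x\Vert/C$. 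Taking the supremum over $D(\mathfrak{s})\setminus\{0\}$ yields $\Vert x\Vert\le C\sup\{|d(x)|/\Vert d\Vert\colon d\in D(\mathfrak{s})\setminus\{0\}\}$; for $x=0$ the inequality is trivial. This proves that $D(\mathfrak{s})$ is $C$-norming.

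The only genuinely essential point is the initial reduction to uniformly bounded projections: the Hahn--Banach argument applied directly to the original skeleton would only produce a bound $\Vert P_s\Vert$ depending on the index $s$ capturing the given $x$, and hence a constant depending on $x$. Passing to a subskeleton---which, crucially, does not alter $D(\mathfrak{s})$---is exactly what upgrades this to a single uniform constant, and the remaining steps are routine.
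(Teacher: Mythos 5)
The paper itself gives no proof of this lemma: it is quoted directly from \cite[Lemma~1.3]{kal20}, so there is nothing internal to compare against. Your argument is correct and is essentially the standard one behind that citation: pass to a subskeleton with uniformly bounded projections via \cite[Proposition~9 and Lemma~10]{kubis09}, note that $D(\mathfrak{s})$ is an up-directed union of the subspaces $P_s^*[X^*]$ (using $P_s^*=P_t^*\circ P_s^*$ for $s\le t$), and then, given $x\in P_s[X]$, pull a Hahn--Banach functional back through $P_s^*$ to get $d\in D(\mathfrak{s})$ with $d(x)=\Vert x\Vert$ and $\Vert d\Vert\le C$. One small streamlining is worth noting: for the norming estimate you only need the trivial inclusion $D(\mathfrak{s}')\subset D(\mathfrak{s})$, since a superset of a $C$-norming set is automatically $C$-norming (the supremum can only increase); likewise your linearity argument applies verbatim to the original skeleton. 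This spares you the appeal to the nontrivial equality $D(\mathfrak{s}')=D(\mathfrak{s})$, whose justification in the paper goes through \cite[Corollary~19]{kubis09}, and keeps the proof self-contained. All the individual estimates, including $\Vert P_s^*(x^*)\Vert\le\Vert P_s\Vert\le C$ and the treatment of $x=0$, are in order.
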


In the statement of Theorem~\ref{thm:main1}, following \cite{CK14}, we use the notion of a simple projectional skeleton. Let us recall it.

\begin{defin}[{J.M. Borwein and W. Moors \cite{BM}}]
Let $X$ be a Banach space and $\mathcal{F}$ be a family of closed separable subspaces of $X$. We say that $\mathcal{F}$ is \emph{rich} if
\begin{enumerate}
    \item each closed separable subspace of $X$ is contained in an element of $\mathcal{F}$ and
    \item for every increasing sequence $(F_n)_{n\in \omega}$ in $\mathcal{F}$, we have $\overline{\bigcup_{n\in\omega} F_n}\in\F$.
\end{enumerate}
\end{defin}
\begin{defin}
A \emph{simple projectional skeleton} on a Banach space $X$ is a family of projections $(P_F)_{F\in\mathcal{F}}$ on $X$ indexed by a rich family $\mathcal{F}$ in $X$ ordered by inclusion such that
\begin{enumerate}
    \item for every $F\in\mathcal{F}$ we have $P_F(X) = F$ and
    \item if $E\subset F$ in $\mathcal{F}$, then $P_E = P_E\circ P_F = P_F \circ P_E$.
\end{enumerate}
\end{defin}

Clearly, every simple projectional skeleton is a projectional skeleton. We also refer the interested reader to \cite{FKK}, where several natural variants of the concept of a simple projectional skeleton were studied.

In our main results we use the notions of SPRI and of shrinkingness. The notion of SPRI is nowadays classical, see e.g. \cite[Definition~6.2.6]{FabRedBook}. The notion of shrinkingness as we introduce it below is new. It is inspired by related concepts of shrinkingness introduced in \cite{FM18}.

\begin{defin}
Let $X$ be a nonseparable Banach space. We say that a family of bounded linear projections $(Q_\alpha)_{\alpha\in[0,\dens X]}$ is a \emph{separable projectional resolution of the identity (SPRI)} in $X$, if $Q_0 = 0$, $Q_{\dens X} = \Id$ and for every $\alpha\in [0,\dens X)$ the following holds
\begin{enumerate}
    \item $(Q_{\alpha+1}-Q_\alpha)[X]$ is separable,
    \item $Q_\alpha Q_\beta = Q_\beta Q_\alpha = Q_\beta$ for $\beta\in[0,\alpha]$, and
    \item $x\in \closedSpan\{(Q_{\beta+1} - Q_\beta)x\colon 0\leq \beta < \dens X\}$, for every $x\in X$.
\end{enumerate}
\end{defin}

Let $X$ be a Banach space and $A\subset X$ be a non-empty bounded set. We define $\rho_A\colon X^*\times X^*\to \mathbb{R}$ as
\[
\rho_A(x^*,y^*):=\sup_{x\in A}|x^*(x)-y^*(x)|, \quad (x^*,y^*) \in X^* \times X^*.
\]
It is clear that $\rho_A$ is a pseudo-metric on $X^*$.

\begin{defin}
Let $X$ be a Banach space, $\Gamma$ be a partially ordered set, $\mathfrak{s} = (P_s\colon s \in \Gamma)$ be a family of bounded projections on $X$, and $A$ be a bounded subset of $X$. Given $\varepsilon\geq 0$ and $x^* \in X^*$, we say that $\mathfrak{s}$ is \textit{$(A, \varepsilon)$-shrinking in $x^*$} if for every increasing sequence $(s_n)_{n \in \omega}$ of elements of $\Gamma$ such that $s=\sup_{n \in \omega}s_n$ exists in $\Gamma$, it holds that $\limsup_{n \to \infty} \rho_A(P_{s_n}^*(x^*), P_s^*(x^*)) \le \varepsilon \Vert x^* \Vert$. We also put:
\[\mathcal{T}(\mathfrak{s}, A):=\{(\varepsilon, B, x^*) \colon \varepsilon\geq 0, B \subset A, x^* \in X^* \ \text{and $\mathfrak{s}$ is $(B, \varepsilon)$-shrinking in $x^*$}\}.\]
\end{defin}

Finally, we collect some notions related to the class of Plichko spaces. Given a subset $A$ of a Banach space $X$ and $x^* \in X^*$, we say that $A$ \emph{countably supports $x^*$} if $\suppt_{A}(x^*):=\{x \in A\colon x^*(x) \ne 0\}$ is countable, and we say that $A$ \emph{countably supports} a subset $D$ of $X^*$ if $A$ countably supports every element of $D$. We recall that a Banach space $X$ is \emph{Plichko} if there exist $C\ge1$, a $C$-norming set $D\subset X^*$ and a linearly dense set $A\subset X$ such that $A$ countably supports $D$. It is nowadays well-known that a Banach space admits a commutative projectional skeleton if and only if it is a Plichko space, see \cite[Theorem~27]{kubis09} for a proof using suitable models and \cite[Theorem~3.1]{kal20} for a proof without using suitable models. Recall that a projectional skeleton $(P_s)_{s \in \Gamma}$ is said to be \emph{commutative} if $P_s \circ P_t=P_t \circ P_s$, for every $s, t \in \Gamma$.

\section{Projections associated to suitable models}\label{sec:models}

In this Section we try to convince the reader that the natural approach when studying Banach spaces with a projectional skeleton is to use the method of suitable models. The results which are needed in further Sections are concentrated in the last subsection.

\subsection{Preliminaries concerning suitable models}

Here we settle the notation and give some basic observations concerning suitable models. We refer the interested reader to \cite{C12}, where more details about this method may be found (warning: in \cite{C12} only \textbf{countable} models were considered, while here we consider suitable models which are not necessarily countable).

Let $M$ be a fixed set and $\phi$ be a formula in the language of ZFC. Then the {\em relativization of $\phi$ to $M$} is the formula, denoted by $\phi^M$, which is obtained from $\phi$ by replacing each quantifier of the form ``$\exists x$'' by ``$\exists x\in M$'' and each quantifier of the form ``$\forall x$'' by ``$\forall x\in M$''. If $\phi(x_1,\ldots,x_n)$ is a formula with all free variables shown (i.e. a formula whose free variables are exactly $x_1,\ldots, x_n$), then we say that {\em $\phi$ is absolute for $M$} if
\[
\forall a_1,\ldots,a_n\in M\quad (\phi^M(a_1,\ldots,a_n) \leftrightarrow \phi(a_1,\ldots,a_n)).
\]

\begin{defin}
\rm Let $\Phi$ be a finite list of formulas and let $X$ and $M$ be sets. We say that $M$ is a \emph{suitable model for $\Phi$ and $X$} and we write $M \prec (\Phi; X)$, if $X \subset M$ and every formula of $\Phi$ is absolute for $M$.
\end{defin}

Note that the Reflexion Theorem \cite[Theorem~IV.7.4]{kunenBook} ensures that given a set $X$ and a finite list of formulas $\Phi$, there exists a set $M$ such that $M \prec (\Phi; X)$. Actually, we have the following stronger result that follows from \cite[Theorem IV.7.8]{kunenBook}.

\begin{thm}\label{thm:modelExists}
Let $\Phi$ be a finite list of formulas and $X$ be any set. Then there exists a set $R$ such that $R\prec (\Phi; X)$, $|R|\leq \max(\omega,|X|))$ and moreover, for every countable set $Z\subset R$ there exists a countable set $M\subset R$ such that $M\prec(\Phi;\; Z)$.
\end{thm}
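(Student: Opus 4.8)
The plan is to extract from the proof of Kunen's Theorem~IV.7.8 the finite family of Skolem functions that witness absoluteness, and then reuse \emph{exactly the same} functions to produce both $R$ and the countable models $M$. First I would let $\Phi'$ be the (finite) closure of $\Phi$ under subformulas, and for each member of $\Phi'$ of the form $\exists y\,\psi(y,z_1,\dots,z_k)$ fix a Skolem function $H_\psi$ that, on input $(a_1,\dots,a_k)$, returns some $b$ with $\psi(b,a_1,\dots,a_k)$ whenever such a $b$ exists (and a fixed default value otherwise). Let $\mathcal H$ denote this finite collection of (definable class) functions, adding if convenient the constant with value $\emptyset$ so that no model is empty. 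The Tarski--Vaught criterion, which is the content underlying \cite[Theorem~IV.7.8]{kunenBook}, then yields the key \emph{local} statement: if a set $N$ is closed under every function in $\mathcal H$ (that is, $H(\bar a)\in N$ whenever $H\in\mathcal H$ and $\bar a$ is a tuple from $N$), then every formula of $\Phi$ is absolute for $N$.

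With this in hand I would construct $R$ by closing $X$ under $\mathcal H$: set $R_0:=X$, put $R_{n+1}:=R_n\cup\{H(\bar a)\colon H\in\mathcal H,\ \bar a\in R_n^{<\omega}\}$, and let $R:=\bigcup_{n\in\omega}R_n$. Since $\mathcal H$ is finite, $|R_{n+1}|\le\max(\omega,|R_n|)$, so by induction $|R|\le\max(\omega,|X|)$; moreover $X\subset R$ and $R$ is closed under $\mathcal H$, whence $R\prec(\Phi;X)$ by the local statement above.

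For the ``moreover'' clause I would apply the very same closure process to a countable $Z\subset R$: set $M_0:=Z$, $M_{n+1}:=M_n\cup\{H(\bar a)\colon H\in\mathcal H,\ \bar a\in M_n^{<\omega}\}$, and $M:=\bigcup_{n\in\omega}M_n$. Closedness of $M$ under $\mathcal H$ again gives $M\prec(\Phi;Z)$, and since each $M_n$ is countable, so is $M$. The one point requiring care---and the main thing to verify---is that $M$ stays inside $R$: this follows by induction, because $Z\subset R$ and $R$ is \emph{already} closed under $\mathcal H$, so $H(\bar a)\in R$ for every tuple $\bar a$ from $M_n\subset R$, giving $M_{n+1}\subset R$.

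The step I expect to be the main obstacle is conceptual rather than computational: one must ensure that a \emph{single} finite family of Skolem functions simultaneously works for $R$ and for every countable submodel $M$. This is precisely what the Skolem-function formulation buys us, since absoluteness of $\Phi$ is a purely local consequence of closure under $\mathcal H$, independent of the ambient set; once the functions are fixed globally and $R$ is taken $\mathcal H$-closed, both the size bound and the hereditary ``moreover'' property fall out of the same construction.
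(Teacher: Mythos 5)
Your proposal is correct and is essentially the argument the paper intends: the paper gives no proof of Theorem~\ref{thm:modelExists} beyond citing \cite[Theorem IV.7.8]{kunenBook}, whose proof is exactly the Skolem-function closure you describe, and the same machinery (least-witness functions $H_i$ plus an $\omega$-stage closure) is spelled out explicitly in the paper's Lemma~\ref{l:skolem}. The one point to make explicit is that the Skolem functions must be defined on a \emph{set} (e.g., after first reflecting $\Phi$ to some $R(\beta)\supseteq X$ and well-ordering it), since choosing witnesses ranging over all of $V$ would require global choice; this is handled in Kunen's proof, which you invoke, and in the paper's Lemma~\ref{l:skolem} by working inside a pre-given absolute set $R$.
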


The fact that a certain formula is absolute for $M$ will always be used exclusively in order to satisfy the assumption of the following lemma. Using this lemma we can force the model $M$ to contain all the needed objects created (uniquely) from elements of $M$.

\begin{lemma}\label{l:unique-M}
Let $\phi(y,x_1,\ldots,x_n)$ be a formula with all free variables shown and let $M$ be a set such that $\phi$ and  $\exists y \phi(y,x_1, \ldots, x_n)$ are absolute for $M$. If $a_1, \ldots, a_n\in M$ are such that there exists a set $u$ satisfying $\phi(u, a_1, \ldots, a_n)$, then there exists a set $v \in M$ satisfying $\phi(v, a_1, \ldots, a_n)$. Moreover, if there exists a unique set $u$ such that $\phi(u, a_1, \ldots, a_n)$, then $u \in M$.
\end{lemma}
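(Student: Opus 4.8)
The plan is to unwind the definitions of relativization and absoluteness and then chain the two absoluteness hypotheses together; there is no combinatorial content here, so the argument is really just careful bookkeeping of free variables.

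First I would record explicitly what the two hypotheses say. Since $\phi$ has free variables $y, x_1, \ldots, x_n$, absoluteness of $\phi$ for $M$ means that for all $b, a_1, \ldots, a_n \in M$ one has $\phi^M(b, a_1, \ldots, a_n) \leftrightarrow \phi(b, a_1, \ldots, a_n)$. Likewise, writing $\psi(x_1, \ldots, x_n) := \exists y\, \phi(y, x_1, \ldots, x_n)$, absoluteness of $\psi$ for $M$ means that for all $a_1, \ldots, a_n \in M$ one has $\psi^M(a_1, \ldots, a_n) \leftrightarrow \psi(a_1, \ldots, a_n)$. The key point to observe is that, by the definition of relativization, $\psi^M(a_1, \ldots, a_n)$ is literally the statement $\exists y \in M\, \phi^M(y, a_1, \ldots, a_n)$.

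Next I would prove the existence part. Fix $a_1, \ldots, a_n \in M$ and assume some set $u$ satisfies $\phi(u, a_1, \ldots, a_n)$; then $\psi(a_1, \ldots, a_n)$ holds in the universe. By absoluteness of $\psi$, the relativized statement $\psi^M(a_1, \ldots, a_n)$ holds, i.e. there is some $v \in M$ with $\phi^M(v, a_1, \ldots, a_n)$. Since $v, a_1, \ldots, a_n$ all lie in $M$, absoluteness of $\phi$ upgrades $\phi^M(v, a_1, \ldots, a_n)$ to $\phi(v, a_1, \ldots, a_n)$, which is exactly the desired element $v \in M$ satisfying $\phi$.

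Finally, for the uniqueness clause I would invoke the hypothesis that $u$ is the unique set with $\phi(u, a_1, \ldots, a_n)$: the element $v \in M$ produced above also satisfies $\phi(v, a_1, \ldots, a_n)$, so $v = u$ by uniqueness, whence $u = v \in M$. The only step worth flagging is the relativization bookkeeping, namely keeping the witness variable $y$ distinct from the parameters $a_1, \ldots, a_n$ and reading the relativized existential quantifier as ranging over $M$; once this is written down carefully, the two equivalences combine essentially in a single line, and I do not expect any genuine obstacle.
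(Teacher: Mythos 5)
Your proof is correct and is the standard argument; the paper itself does not spell out a proof but simply cites \cite[Lemma~5]{CCS}, and the argument given there is precisely this chaining of the two absoluteness hypotheses via the observation that $(\exists y\,\phi)^M$ is $\exists y\in M\,\phi^M$. Nothing further is needed.
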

\begin{proof}
See \cite[Lemma~5]{CCS}.
\end{proof}

\begin{convention}
Whenever we say ``\emph{for any suitable model $M$ (the following holds \dots)}''
we mean that  ``\emph{there exists a finite list of formulas $\Phi$ and a countable set $S$ such that for every $M \prec (\Phi;S)$ (the following holds \dots)}''.
\end{convention}

Given a topological space $X$ and a set $M$, we put
\[X_M:=\overline{X\cap M}.\]
If $M$ is a set and $\langle X,\tau\rangle$ is a topological space or $\langle X,d\rangle$ is a metric space or $\langle X,+,\cdot,\|\cdot\|\rangle$ is a normed linear space, then we say that \emph{$M$ contains $X$} if $\langle X,\tau\rangle\in M$, $\langle X,d\rangle\in M$ or $\langle X,+,\cdot,\|\cdot\|\rangle\in M$, respectively.

Given an up-directed and $\sigma$-complete partially ordered set $(\Gamma, \le)$ and a projectional skeleton $\mathfrak{s}=(P_s)_{s \in \Gamma}$ on a Banach space $X$, we say that a set $M$ \emph{contains} $\mathfrak{s}$ if $(\Gamma, \le) \in M$ and $P\in M$, where $P\colon\Gamma \to \mathcal{L}(X,X)$ is the mapping given by $P(s)=P_s$, for every $s \in \Gamma$.

Basic properties of suitable models were presented in \cite[Lemma~7 and Lemma~8]{CCS}. In the next lemma we summarize further properties that will be used in this work.

\begin{lemma}\label{l:basics}
For every suitable model $M$ the following holds:
\begin{enumerate}
\item\label{it:homeo} If $f\colon T\to W$ is a homeomorphism between topological spaces and $f \in M$, then $f[\overline{T\cap M}] = \overline{W\cap M}$.
\item\label{it:models-Banach} Let $X$ be a Banach space such that $M$ contains $X$.
\begin{enumerate}[label = (\roman*)]
    \item\label{it:hull} The operator $\Span$ which assigns to every subset of $X$ its linear hull belongs to $M$.

    \item\label{it:linearlyDenseSubset} If $A\subset X$ is linearly dense and $A\in M$, then $\closedSpan (A\cap M) = X_M$.

    \item\label{it: weakstar-closure-subspace} If $D$ is a linear subspace of $X^*$ and $M$ contains $D$, then $\overline{D\cap M}^{\,w^*}=((D\cap M)_\perp)^\perp$.
\end{enumerate}

\end{enumerate}
\end{lemma}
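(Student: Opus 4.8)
The plan is to invoke the Convention: it suffices to produce a finite list of formulas $\Phi$ and a countable set $S$ so that all four assertions hold for every $M\prec(\Phi;S)$. The whole argument rests on Lemma~\ref{l:unique-M}, so I would put into $\Phi$ (together with the corresponding existential prefixes) the formulas expressing the evaluation relation ``$y=f(x)$'' of a function at a point, the relation ``$g=f^{-1}$'' defining the inverse of a bijection, the formula defining the span operator $\Span_X$ of a normed space, and the formula describing a finite $\qe$-linear combination in a normed space. Uniqueness of the witnessed object then forces it into $M$ via Lemma~\ref{l:unique-M}; in particular $f\in M$ and $x\in M$ give $f(x)\in M$, and $f\in M$ gives $f^{-1}\in M$. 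Throughout I would freely use the basic facts recorded in \cite[Lemmas~7 and~8]{CCS}: that $\qe\subset M$, and that whenever $M$ contains a Banach space $X$ (resp.\ a linear subspace $D\subset X^*$), the set $X\cap M$ (resp.\ $D\cap M$) is a $\qe$-linear subspace, so that $X_M=\overline{X\cap M}$ is a closed linear subspace and $\overline{D\cap M}^{\,w^*}$ is a $w^*$-closed linear subspace.

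For assertion~\ref{it:homeo} I would first check the set equality $f[T\cap M]=W\cap M$. If $x\in T\cap M$, then $f(x)\in M$ by the evaluation formula and $f(x)\in W$, so $f[T\cap M]\subset W\cap M$; conversely, for $w\in W\cap M$ we have $f^{-1}\in M$, hence $f^{-1}(w)\in T\cap M$ and $w=f(f^{-1}(w))\in f[T\cap M]$. Since $f$ is a homeomorphism, $f[\overline{A}]=\overline{f[A]}$ for every $A\subset T$; applying this to $A=T\cap M$ gives $f[\overline{T\cap M}]=\overline{f[T\cap M]}=\overline{W\cap M}$, as desired.

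Assertion~\ref{it:hull} is immediate: $\Span_X$ is the unique object satisfying its defining formula once $X$ is fixed, so Lemma~\ref{l:unique-M} places it in $M$ because $M$ contains $X$. For assertion~\ref{it:linearlyDenseSubset}, the inclusion $\closedSpan(A\cap M)\subset X_M$ is clear because $A\cap M\subset X\cap M\subset X_M$ and $X_M$ is a closed subspace. For the reverse inclusion I would reflect twice. Having $\Span\in M$ and $A\in M$, evaluation gives $\Span(A)\in M$; then for $x\in X\cap M$ and $n\geq1$ the true statement ``$\exists v\in\Span(A)$ with $\|x-v\|<1/n$'' reflects into $M$ (its parameters $x,\Span(A),n$ lie in $M$), producing $v\in\Span(A)\cap M$, whence $x\in\overline{\Span(A)\cap M}$ and so $X_M\subset\overline{\Span(A)\cap M}$. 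Finally, coding finite combinations by functions, the statement ``$v\in\Span(A)$'' reads ``$\exists n\in\omega\,\exists g\colon n\to A\,\exists h\colon n\to\er$ with $v=\sum_{i<n}h(i)g(i)$''; reflecting it for $v\in\Span(A)\cap M$ yields such a $g\in M$, and since $i<n$ implies $i\in M$, evaluation gives $g(i)\in A\cap M$. Hence $\Span(A)\cap M\subset\Span(A\cap M)$, so $\overline{\Span(A)\cap M}\subset\closedSpan(A\cap M)$, and the two inclusions combine to $\closedSpan(A\cap M)=X_M$.

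For assertion~\ref{it: weakstar-closure-subspace}, since $D\cap M$ is a $\qe$-linear subspace its $w^*$-closure is a real $w^*$-closed subspace, so $\overline{D\cap M}^{\,w^*}=\overline{\Span(D\cap M)}^{\,w^*}$; the standard duality formula $\overline{\Span(S)}^{\,w^*}=(S_\perp)^\perp$ (see \cite{FHHMZ}) applied to $S=D\cap M$ then gives $\overline{D\cap M}^{\,w^*}=((D\cap M)_\perp)^\perp$. The step I expect to require the most care is the double reflection in assertion~\ref{it:linearlyDenseSubset}: one must phrase membership in $\Span(A)$ through a function $g\colon n\to A$ rather than through an unbounded block of existential quantifiers, precisely so that the reflected witness lands in $M$ and its finitely many values can be pulled back into $A\cap M$ by function-evaluation absoluteness.
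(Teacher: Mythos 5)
Your proof is correct and follows the paper's strategy: everything is reduced to Lemma~\ref{l:unique-M} applied to suitably chosen formulas, together with the basic facts from \cite[Lemmas~7 and~8]{CCS}. Items \eqref{it:homeo}, \ref{it:hull} and \ref{it: weakstar-closure-subspace} are argued exactly as in the paper (for \eqref{it:homeo} the paper reflects ``$\exists t\in T\colon w=f(t)$'' and then applies the one-sided inclusion to $f^{-1}$, which is the same computation as your set equality $f[T\cap M]=W\cap M$ followed by taking closures). The only genuine divergence is in \ref{it:linearlyDenseSubset}: for the inclusion $X_M\subset\closedSpan(A\cap M)$ the paper reflects the single formula ``$\exists S\subset A\colon S$ is countable and $x\in\closedSpan S$'' and then invokes \cite[Lemma~7(4)]{CCS} to get $S\subset M$, whereas you reflect twice --- first an approximation statement to land in $\Span(A)\cap M$, then the finite-linear-combination representation coded by a function $g\colon n\to A$ to pull the generators into $A\cap M$. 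Both work; the paper's version is shorter and leans on the countable-sets-are-subsets property of the models, while yours uses only function-evaluation absoluteness and $\omega\subset M$, at the cost of an extra reflection step and the care you correctly identify about coding the finite combination as a function rather than an unbounded block of quantifiers.
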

\begin{proof}
Let $S$ be the union of the countable sets from the statements of \cite[Lemma 7 and Lemma 8]{CCS} and let $\Phi$ be the union of the finite lists of formulas from the statements of \cite[Lemma 7 and Lemma 8]{CCS} enriched by the formulas (and their subformulas) marked by $(*)$ in the proof below. Let $M\prec (\Phi; S)$.

Let $f\colon T \to W$ be as in \eqref{it:homeo}. Given any $w \in W \cap M$, using \cite[Lemma 7(2)]{CCS}, Lemma \ref{l:unique-M} and the absoluteness of the following formula (and its subformulas)
\[
\exists t \in T\colon\quad w=f(t),\eqno{(*)}
\]
we conclude that there exists $t \in T \cap M$ such that $w=f(t)$. This implies that $W \cap M \subset f[T \cap M]$ and thus $\overline{W \cap M} \subset f[\overline{T \cap M}]$. Note that \cite[Lemma 8(3)]{CCS} ensures that $f^{-1} \in M$. Thus it follows from the proved above that $\overline{T \cap M} \subset f^{-1}[\overline{W \cap M}]$, which implies that $f[\overline{T \cap M}] \subset \overline{W \cap M}$.

Now, let $X$ and $M$ be as in \eqref{it:models-Banach}. Item \ref{it:hull} follows from Lemma~\ref{l:unique-M} and the absoluteness of the following formula (and its subformulas)
\[\begin{split}
\exists \Span\colon\PP(X)\to\PP(X)\colon\quad & (\forall A\subset X\colon \Span A\text{ is the minimal linear subspace}\\ & \text{ containing the set }A).\qquad (*)
\end{split}\]
In order to prove \ref{it:linearlyDenseSubset}, note that \cite[Lemma 7(7)]{CCS} ensures that $X_M$ is a (closed) linear subspace of $X$ and thus $\closedSpan (A\cap M) \subset X_M$. On the other hand, for every $x\in X\cap M$, using the fact that $A$ is linearly dense in $X$, item \ref{it:hull}, Lemma~\ref{l:unique-M} and the absoluteness of the following formula (and its subformulas)
\[
\exists S\subset A\colon\qquad (S\text{ is countable and }x\in \closedSpan S),\eqno{(*)}
\]
we conclude that there exists a countable set $S\subset A$ such that $S\in M$ and $x\in \closedSpan S$. It follows from \cite[Lemma 7(4)]{CCS} that $S\subset M$ and thus we obtain that $x\in \closedSpan (A\cap M)$. This shows that $X \cap M \subset  \closedSpan (A\cap M)$ and therefore $X_M\subset \closedSpan (A\cap M)$. Now let us prove \ref{it: weakstar-closure-subspace}. Note that since the linear operations of $D$ belong to $M$, using the fact that $\mathbb{Q} \subset M$ and \cite[Lemma 7(2)]{CCS}, we conclude that $D \cap M$ is a $\mathbb{Q}$-linear subspace of $X^*$ and therefore $\overline{D \cap M}^{\,w^*}$ is a linear subspace of $X^*$. Therefore, we have that
\[((D\cap M)_\perp)^\perp =\closedSpan^{\,w^*}(D\cap M)=\overline{D\cap M}^{\,w^*}.\qedhere\]
\end{proof}

In the rest of the paper we will use several times arguments similar to those presented in the proof of Lemma \ref{l:basics}. To simplify the presentation, we adopt the following terminology. Whenever we say that ``it follows from absoluteness that some object $\Omega$ is in $M$'' or that ``it follows from absoluteness that there exists an object $\Omega$ in $M$ with a certain property'', we mean that the existence of such $\Omega$ in $M$ can be established using Lemma \ref{l:unique-M} for an appropriate formula $\phi$ (which either uniquely defines $\Omega$ or states the property that $\Omega$ should satisfy) and that the formulas
$\phi$ and $\exists\Omega\,\phi$ should be added to the finite list of formulas that are absolute for $M$.

Next Lemma describes a very concrete construction of suitable models. The reason for including it here is twofold. First, it shows that there is nothing too abstract when working with suitable models and this terminology just enables us to replace complicated inductive constructions by an abstract notion. Second, it is one of the key tools in the proof of Theorem \ref{thm:subskeletons}. The construction is more-or-less standard. It is described also in \cite[Lemma 2.4]{CK14} (where it is formulated for countable models only); for the convenience of the reader we provide a full proof below.
\begin{lemma}\label{l:skolem}
Let $\Phi$ be a finite subformula closed list of formulas and $R$ be a set such that $R\prec (\Phi; \emptyset)$. Then there is a mapping $\psi\colon\PP(R)\to \PP(R)$ (called the \emph{Skolem function}) satisfying the following
\begin{enumerate}
    \item\label{it:isModel} \[\forall A\subset R\colon \quad \psi(A)\prec (\Phi; A) \text{ and } |\psi(A)| \le \max(\omega, |A|),\]
    \item\label{it:monotone} The mapping $\psi$ is monotone, i.e., $\psi(A)\subset \psi(B)$ whenever $A, B\in\PP(R)$ are such that $A\subset B$.
    \item\label{it:idempotent} The mapping $\psi$ is idempotent, i.e., $\psi\circ \psi = \psi$.
    \item\label{it:union} For every $\F\subset \PP(R)$ such that $\{\psi(F)\colon F\in\F\}$ is up-directed, we have  that $\psi(\bigcup\F) = \bigcup_{F\in\F}\psi(F)$.
    \item\label{it:construction} Let $J\subset R$ be an arbitrary set. Then, for every $A\subset J$ and $B\subset R$, we have that
	\[\psi(A)\cap J\subset \psi(B) \Longleftrightarrow \psi(A)\subset \psi(B).\]
	\item\label{it:skolem-intersection} For every $\F \subset \PP(R)$, we have that \[\psi\Big(\bigcap_{F \in \F}\psi(F)\Big) = \bigcap_{F\in \F}\psi(F).\]
\end{enumerate}
\end{lemma}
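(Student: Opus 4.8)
The plan is to realize $\psi$ as the closure operator attached to a fixed finite family of Skolem functions on $R$, and then to deduce the two more exotic properties \eqref{it:construction} and \eqref{it:skolem-intersection} purely formally from monotonicity and idempotency, so that the only real construction work goes into \eqref{it:isModel}--\eqref{it:union}.

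First I would fix a well-ordering $\preceq$ of $R$. Since $\Phi$ is finite and each of its formulas has finitely many free variables, there are only finitely many pairs $(\phi,y)$ with $\phi\in\Phi$ and $y$ a free variable of $\phi$; enumerating the remaining free variables of such a $\phi$ as $x_1,\dots,x_n$, I define a partial function $h_{\phi,y}\colon R^n\to R$ by letting $h_{\phi,y}(\bar a)$ be the $\preceq$-least $b\in R$ with $R\models\phi(b,\bar a)$ when such a $b$ exists, and leaving it undefined otherwise. (When $y$ is the only free variable of $\phi$ this is a $0$-ary function, i.e.\ a constant, which is what guarantees nonvacuity for $A=\emptyset$.) Then I set $\psi(A)\subset R$ to be the smallest set containing $A$ and closed under all the $h_{\phi,y}$; concretely $\psi(A)=\bigcup_m A_m$ with $A_0=A$ and $A_{m+1}=A_m\cup\{h_{\phi,y}(\bar a)\colon (\phi,y)\text{ as above and }\bar a\text{ a tuple from }A_m\text{ on which }h_{\phi,y}\text{ is defined}\}$.

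For \eqref{it:isModel} the cardinality bound $|\psi(A)|\le\max(\omega,|A|)$ is immediate because only finitely many finitary functions are involved. To get $\psi(A)\prec(\Phi;A)$ I would run the Tarski--Vaught induction on formula complexity: closure of $\psi(A)$ under the $h_{\phi,y}$ ensures that whenever $R\models\exists y\,\phi(y,\bar a)$ with $\bar a\in\psi(A)$ and $\exists y\,\phi\in\Phi$, a witness already lies in $\psi(A)$ (here subformula-closedness of $\Phi$ is used to know $\phi\in\Phi$). This yields $\phi^{\psi(A)}\leftrightarrow\phi^R$ on $\psi(A)$ for every $\phi\in\Phi$, which, combined with the hypothesis $R\prec(\Phi;\emptyset)$ (so $\phi^R\leftrightarrow\phi$ holds on all of $R\supset\psi(A)$), gives absoluteness of $\Phi$ for $\psi(A)$. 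Properties \eqref{it:monotone} and \eqref{it:idempotent} are the standard monotonicity and idempotency of a closure operator built from a \emph{fixed} family of functions: $A_m\subset B_m$ whenever $A\subset B$, and $\psi(A)$ is by construction already closed. For \eqref{it:union}, given $\F$ with $\{\psi(F)\colon F\in\F\}$ up-directed, the inclusion $\bigcup_{F}\psi(F)\subset\psi(\bigcup\F)$ is monotonicity, while the reverse follows once I check that $\bigcup_{F}\psi(F)$ is itself closed under each $h_{\phi,y}$: any finite tuple from it lies in a single $\psi(F)$ by up-directedness, and each $\psi(F)$ is closed.

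What I expect to be the conceptual crux, rather than a genuine obstacle, is that \eqref{it:construction} and \eqref{it:skolem-intersection} require no further features of the construction and follow formally from \eqref{it:isModel}--\eqref{it:idempotent}. For \eqref{it:construction}, from $A\subset J$ and $A\subset\psi(A)$ I get $A\subset\psi(A)\cap J\subset\psi(A)$, so applying $\psi$ together with monotonicity and idempotency gives $\psi(\psi(A)\cap J)=\psi(A)$; the hypothesis $\psi(A)\cap J\subset\psi(B)$ then yields $\psi(A)=\psi(\psi(A)\cap J)\subset\psi(\psi(B))=\psi(B)$, and the converse implication is trivial since $\psi(A)\cap J\subset\psi(A)$. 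For \eqref{it:skolem-intersection}, writing $C=\bigcap_{F}\psi(F)$, I have $C\subset\psi(C)$ by \eqref{it:isModel}, while $C\subset\psi(F)$ for each $F$ gives $\psi(C)\subset\psi(\psi(F))=\psi(F)$, hence $\psi(C)\subset C$. The only genuinely technical point in the whole argument is the routine bookkeeping in the Tarski--Vaught step for \eqref{it:isModel}; everything else is either a standard closure-operator fact or a short formal manipulation.
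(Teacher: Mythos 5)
Your proposal is correct and follows essentially the same construction as the paper: the identical Skolem-function closure operator built from a well-ordering of $R$ and witness functions for the (subformula-closed, finite) list $\Phi$, with the same Tarski--Vaught argument for item (1), the same closure-operator observations for items (2)--(4), and the same formal manipulation for item (5). The only divergence is item (6), which you derive purely formally from monotonicity and idempotency (namely $C\subset\psi(C)$ and $\psi(C)\subset\psi(\psi(F))=\psi(F)$ for each $F$, where $C=\bigcap_{F}\psi(F)$), whereas the paper repeats the level-by-level induction used for item (4); your route is slightly cleaner and equally valid.
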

\begin{proof}
Fix a well-ordering $\triangleleft$ on the set $R$ and let $\varphi_1,\ldots,\varphi_n$ be the formulas from the list $\Phi$.
For every $i\in\{1,\ldots,n\}$, denote by $l_i$ the number of all the free variables of the formula $\varphi_i$ and consider the mapping $H_i\colon R^{l_i}\to R$ defined as follows:
\begin{itemize}
	\item if $l_i = 0$, then $R^{l_i} = \{\emptyset\}$ and $H_i(\emptyset)$ is the $\triangleleft$-least element of $R$.
	\item if $l_i > 0$ and $(r_1,\ldots,r_{l_i})\in R^{l_i}$ is fixed, then:
		\begin{itemize}
			\item[-] if there exists $j \in \{1,\ldots,n\}$ such that $\varphi_i = \exists x\,\,\varphi_j(x,y_1,\ldots,y_{l_i})$ and there exists $r\in R$ such that $\varphi_j(r,r_1,\ldots,r_{l_i})$ holds, then $H_i(r_1,\ldots,r_{l_i})$ is the $\triangleleft$-least of such elements.		
			\item[-] in all the other cases, $H_i(r_1,\ldots,r_{l_i})$ is the $\triangleleft$-least element of $R$.
		\end{itemize}
\end{itemize}
Fixed $A\in \PP(R)$, we define $\psi(A):=\bigcup_{k \in \omega} A_k$, where $(A_k)_{k \in \omega}$ is the increasing sequence of subsets of $R$ built by recursion such that $A_0:=A$ and
$$A_{k+1} := A_k\cup \bigcup\{H_i(a_1,\ldots,a_{l_i})\setsep i = 1,\ldots,n\;,(a_1,\ldots,a_{l_i})\in (A_k)^{l_i}\},$$
for every $k \in \omega$. Note that it follows immediately from the definition of the Skolem function $\psi$ that $|\psi(A)|\le \max(\omega, |A|)$ and that \eqref{it:monotone} and \eqref{it:idempotent} hold. Moreover using \cite[Lemma 2.1]{C12} and the fact that every formula of $\Phi$ is absolute for $R$, we conclude that $\psi(A)\prec (\Phi; A)$, since clearly $A \subset \psi(A)$.

Let $\F$ be as in \eqref{it:union}. Then it follows from \eqref{it:monotone} that $\psi(\bigcup\F) \supset \bigcup_{F\in\F}\psi(F)$. In order to prove the other inclusion, firstly we claim that $\psi(\bigcup_{F\in\F}\psi(F)) = \bigcup_{F\in\F}\psi(F)$. Indeed, let $(A_k)_{k \in \omega}$ be the sequence used in the definition of $\psi(\bigcup_{F\in\F}\psi(F))$ and let us show by induction that $A_k=\bigcup_{F\in\F}\psi(F)$, for every $k \in \omega$. It is clear that $A_0=\bigcup_{F\in\F}\psi(F)$. Now fix $k \in \omega$ and assume that $A_k=\bigcup_{F\in\F}\psi(F)$. Note that fixed $i\in\{1,\ldots,n\}$ and $a_1,\ldots,a_{l_i}\in A_k=\bigcup_{F\in\F}\psi(F)$, there exists $F\in\F$ such that $a_1,\ldots,a_{l_i}\in\psi(F)$, since $\{\psi(F)\colon F\in\F\}$ is up-directed. This implies that $H_i(a_1,\ldots,a_{l_i})\in \psi(F)$ and thus  $A_{k+1}= \bigcup_{F\in\F}\psi(F)$. Since $\psi(\bigcup_{F\in\F}\psi(F))=\bigcup_{k \in \omega} A_k$, we conclude the claim. Finally, since $\bigcup\F\subset \bigcup_{F\in\F}\psi(F)$, using \eqref{it:monotone} we obtain that $\psi(\bigcup\F)\subset \psi(\bigcup_{F\in\F}\psi(F))=\bigcup_{F\in\F}\psi(F)$.

Let $J,A,B$ be as in \eqref{it:construction}. It is obvious that if $\psi(A) \subset \psi(B)$, then $\psi(A) \cap J \subset \psi(B)$. Now suppose that $\psi(A)\cap J\subset \psi(B)$ and note that this implies that $A \subset \psi(B)$. Indeed, using the fact that $A \subset \psi(A)$, we obtain that:
\[A=A \cap J \subset \psi(A) \cap J \subset \psi(B).\]
Therefore, using \eqref{it:monotone} and \eqref{it:idempotent}, we conclude that $\psi(A) \subset \psi(B)$.

Finally, fix $\F \subset \PP(R)$ and let $(A_k)_{k \in \omega}$ be the sequence used in the definition of $\psi\Big(\bigcap_{F \in \F}\psi(F)\Big)$. By an inductive argument similar to the presented in the proof of \eqref{it:union}, we conclude that $A_k=\bigcap_{F \in \F}\psi(F)$, for every $k \in \omega$. This establishes \eqref{it:skolem-intersection}, since $\psi\Big(\bigcap_{F \in \F}\psi(F)\Big)=\bigcup_{k \in \omega} A_k$.
\end{proof}

The following proposition goes essentially back to the proof of \cite[Theorem 15]{C18}. We record it here for further use in Subsection~\ref{subsec:usedElsewhere}. As usual we denote the density of a topological space $X$ by $\dens X$.

\begin{prop}\label{prop:inclusions}There exist a countable set $S$ and a finite list of formulas $\Phi$ such that the following holds: Let $X$ be a topological space which is homeomorphic to a Banach space and put $\kappa := \dens X$. Then there exists a countable set $S'\supset S$ such that
\[
\forall M,N\prec(\Phi;S')\colon\quad M\cap \kappa\subset N\cap \kappa \Leftrightarrow X_M\subset X_N.
\]
\end{prop}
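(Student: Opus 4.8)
The plan is to transport the whole question into the linear setting and to encode the ordinals below $\kappa$ by a sufficiently rigid linearly dense set. First I would fix a homeomorphism $h\colon X\to Y$ onto a Banach space $Y$, so that $\dens Y=\kappa$. Using the classical fact that every Banach space admits a fundamental biorthogonal system of cardinality equal to its density (see \cite[Theorem~1.27]{HMVZ}; the finite-dimensional case being trivial), I would pick $\{(y_\alpha,y_\alpha^*)\colon\alpha<\kappa\}$ with $A:=\{y_\alpha\colon\alpha<\kappa\}$ linearly dense in $Y$ and $y_\alpha^*(y_\beta)=\delta_{\alpha\beta}$, and let $e\colon\kappa\to A$, $e(\alpha):=y_\alpha$, be the associated bijection. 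The fixed pair $(\Phi,S)$ is taken to be the finite list of formulas and the countable set required by the (finitely many) applications of Lemma~\ref{l:basics} and Lemma~\ref{l:unique-M} made below; these do not depend on $X$. Then $S'$ is obtained by adjoining to $S$ the finitely many coding objects $X$, $\langle Y,+,\cdot,\|\cdot\|\rangle$, $h$, $A$, $e$, $e^{-1}$ and $\kappa$, so that $S'\supset S$ is still countable.

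The first step is to pass from $X$ to $Y$. Since $h\in S'\subset M$ for every $M\prec(\Phi;S')$, Lemma~\ref{l:basics}\eqref{it:homeo} gives $h[X_M]=Y_M$, and as $h$ is a bijection we obtain, for all such $M,N$, that $X_M\subset X_N$ if and only if $Y_M\subset Y_N$. So it suffices to establish
\[
M\cap\kappa\subset N\cap\kappa\ \Longleftrightarrow\ Y_M\subset Y_N.
\]
The second step identifies $Y_M$ explicitly. Because $M$ contains $Y$, $A\in M$ and $A$ is linearly dense, Lemma~\ref{l:basics}\ref{it:linearlyDenseSubset} yields $Y_M=\closedSpan(A\cap M)$. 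Since $e,e^{-1}\in M$, applying absoluteness (Lemma~\ref{l:unique-M}) to the formulas $y=e(\alpha)$ and $\alpha=e^{-1}(y)$ shows $A\cap M=e[M\cap\kappa]=\{y_\alpha\colon\alpha\in M\cap\kappa\}$, and hence $Y_M=\closedSpan\{y_\alpha\colon\alpha\in M\cap\kappa\}$; the same holds for $N$.

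With this description the forward implication is immediate: if $M\cap\kappa\subset N\cap\kappa$, then $\{y_\alpha\colon\alpha\in M\cap\kappa\}\subset\{y_\alpha\colon\alpha\in N\cap\kappa\}$, so the corresponding closed spans are nested. For the converse I would assume $Y_M\subset Y_N$ and fix $\alpha\in M\cap\kappa$, so that $y_\alpha\in Y_M\subset Y_N=\closedSpan\{y_\beta\colon\beta\in N\cap\kappa\}$. If $\alpha\notin N\cap\kappa$, then $y_\alpha^*$ vanishes on every $y_\beta$ with $\beta\in N\cap\kappa$, hence on the closed span of these vectors, contradicting $y_\alpha^*(y_\alpha)=1$; therefore $\alpha\in N\cap\kappa$, which gives $M\cap\kappa\subset N\cap\kappa$.

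The hard part, and the only genuinely delicate point, is precisely this converse: taking closures destroys the information of which indices were used, so an arbitrary dense set will not do. This is what forces the use of a \emph{biorthogonal} (minimal) linearly dense system, the functionals $y_\alpha^*$ acting purely as an external separation device that never needs to enter the models $M,N$. The remaining care is bookkeeping, namely verifying that $(\Phi,S)$ can be chosen uniformly in $X$ (all invoked absoluteness statements are schematic and independent of $X$) and that adjoining the finitely many coding objects keeps $S'$ countable.
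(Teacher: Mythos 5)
Your overall architecture (transport to a linear setting, identify $Y_M$ as $\closedSpan\{y_\alpha\colon\alpha\in M\cap\kappa\}$, and use a ``rigid'' linearly dense family so that membership of an index can be read off from the closed span) is exactly the right one, and every step except one matches what is needed. The gap is the ``classical fact'' you lean on: it is \emph{not} a theorem of ZFC that every Banach space admits a fundamental biorthogonal system of cardinality equal to its density. A fundamental biorthogonal system indexed by $I$ is in particular a biorthogonal system with $|I|\ge\dens Y$, and under CH Kunen constructed a nonseparable $\C(K)$ space with no uncountable biorthogonal system whatsoever (this is one of the main themes of Chapter 4 of the very book you cite); the separable results of Markushevich/Ovsepian--Pe\l czy\'nski do not help here because $\kappa$ may be uncountable. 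So for a general Banach space $Y$ homeomorphic to $X$, the object your converse implication hinges on simply may not exist, and the proof does not go through as written.

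The repair is precisely the paper's move, and it is the one substantive idea your proposal is missing: instead of working with the given Banach space $Y$, invoke Toru\'nczyk's theorem that all infinite-dimensional Banach spaces of the same density are homeomorphic, and take the homeomorphism $f\colon\ell_2(\kappa)\to X$. The canonical orthonormal basis $(e_i)_{i<\kappa}$ of $\ell_2(\kappa)$ is the rigid linearly dense family you wanted: $e_i\notin\closedSpan\{e_j\colon j\ne i\}$ plays exactly the role of your functionals $y_\alpha^*$, and the rest of your argument (including the identification $X_M=f[\closedSpan\{e_i\colon i\in\kappa\cap M\}]$ via Lemma~\ref{l:basics}\eqref{it:homeo} and \ref{it:linearlyDenseSubset}) then goes through essentially verbatim. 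You correctly isolated the converse inclusion as the delicate point and correctly saw that minimality is what makes it work; the error is only in assuming such a minimal fundamental system is available in an arbitrary Banach space, which is exactly why the paper (see Remark~\ref{ourresult-generalizes}) replaces biorthogonality arguments by Toru\'nczyk's topological theorem.
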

\begin{proof}
Let $S$ be the union of the countable sets from the statements of Lemma~\ref{l:basics} and \cite[Lemma~7 and Lemma~8]{CCS} and let $\Phi$ be the union of the finite lists of formulas from the statements of Lemma~\ref{l:basics} and \cite[Lemma~7 and Lemma~8]{CCS}.
By the result of H. Toru\'{n}czyk \cite{Tor81}, all infinite-dimensional Banach spaces with the same density are topologically homeomorphic. Thus, there exists a (not necessarily linear) homeomorphism $f\colon\ell_2(\kappa)\to X$. Let $(e_i)_{i\in \kappa}$ be the canonical orthonormal basis of $\ell_2(\kappa)$, let $e\colon\kappa\to\ell_2(\kappa)$ be the map given by $e(i):=e_i$, for every $i<\kappa$ and let $S'$ be a countable set that contains $X$, $\ell_2(\kappa)$ and such that $S \cup \{f,e\} \subset S'$.

Note that $X_M = f\big[\closedSpan \{e_i\colon i\in \kappa\cap M\}\big]$, for every $M\prec (\Phi;S')$. Indeed, it follows from Lemma~\ref{l:basics}\eqref{it:models-Banach}\ref{it:linearlyDenseSubset} and \cite[Lemma~7(2)]{CCS} that $\overline{\ell_2(\kappa) \cap M}=\closedSpan \big(\{e_i\colon i \in \kappa\} \cap M\big)$. Thus using Lemma~\ref{l:basics}\eqref{it:homeo} and \cite[Lemma~8(4)]{CCS}, we conclude that $X_M = f\big[\closedSpan \{e_i\colon i\in \kappa\cap M\}\big]$. Now fix any $M,N\prec(\Phi;S')$. By the above, it is easy to observe that $M\cap \kappa\subset N\cap \kappa$ implies $X_M\subset X_N$. On the other hand, if $X_M\subset X_N$ and $i\in M\cap \kappa$ is given, then $f(e_i)\in X_M \subset X_N = f\big[\closedSpan\{e_j\colon j\in N\cap \kappa\}\big]$ and so we have that $i\in N\cap \kappa$, since $f$ is injective and $e_i\notin \closedSpan\{e_j\colon j\neq i, j<\kappa\}$.
\end{proof}

\subsection{Canonical projections in spaces with a projectional skeleton}

The next lemma is a key tool for the construction of projections in Banach spaces with a projectional skeleton. This is more-or-less known to the experts, see \cite[Lemma 4]{kubis09} and \cite[Lemma 5.1]{BHK17}, where proofs are essentially given for suitable models which are countable. Since this is a crucial tool for us and we need it for uncountable models as well, for the convenience of the reader we include a full proof here.

\begin{lemma}\label{l:projection}
For every suitable model $M$ the following holds: Let $X$ be a Banach space, $C\geq 1$ and $D$ be a $C$-norming subspace of $X^*$ such that $M$ contains $X$ and $D \in M$. Then \begin{enumerate}[label = (\roman*)]
    \item\label{it:intersection-XM} $X_M\cap (D\cap M)_\perp = \{0\}$,
    \item\label{it:projection} $X_M + (D\cap M)_\perp$ is a closed subspace of $X$ and the
    mapping
    \[
    X_M + (D\cap M)_\perp \ni (u+v)\longmapsto u\in X_M
    \]
    is a linear projection of norm at most $C$,
\end{enumerate}
and the following conditions are equivalent:
\begin{enumerate}[label = (\arabic*)]
\item\label{it:rangeIsX} $X_M + (D\cap M)_\perp = X$,
\item\label{it:separation} $X\cap M$ separates the points of $\overline{D\cap M}^{\,w^*}$,
\item\label{it:uniqueProjeciton} there exists a unique projection $P_M:X\to X$ with $P_M[X] = X_M$ and $\ker P_M = (D\cap M)_\perp$,
\item\label{it:uniqueProjeciton2} there exists a unique projection $P_M\colon X\to X$ with $P_M[X] = X_M$ and $d = d\circ P_M$ for every $d\in D\cap M$.
\end{enumerate}
Moreover, for any projection $Q\colon X\to X$ with $Q[X] = X_M$ we have $\ker Q = (D\cap M)_\perp$ if and only if $d = d\circ Q$ for every $d\in D\cap M$.
\end{lemma}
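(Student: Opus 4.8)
The plan is to prove Lemma~\ref{l:projection} by first establishing the two unconditional assertions \ref{it:intersection-XM} and \ref{it:projection}, and then running the cycle of equivalences, with the final ``Moreover'' clause falling out of the argument for \ref{it:uniqueProjeciton}$\Leftrightarrow$\ref{it:uniqueProjeciton2}. The $C$-norming hypothesis on $D$ is what drives everything: it controls the size of vectors via their action on $D$, and since $D\in M$ the linear operations on $D$ belong to $M$, so by Lemma~\ref{l:basics}\eqref{it:models-Banach}\ref{it: weakstar-closure-subspace} we have the crucial identity $\overline{D\cap M}^{\,w^*}=((D\cap M)_\perp)^\perp$, which I expect to use repeatedly to translate between the annihilator $(D\cap M)_\perp$ and the weak$^*$-closed subspace it determines.

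First I would prove \ref{it:intersection-XM}. Take $u\in X_M\cap(D\cap M)_\perp$. By Lemma~\ref{l:basics}\eqref{it:models-Banach}\ref{it:linearlyDenseSubset} (applied to a linearly dense $A\in M$, e.g.\ $A=X$ or a dense set), $X_M=\closedSpan(X\cap M)$, so $u$ is approximated by vectors in $\Span(X\cap M)$; the point is that for every $d\in D\cap M$ one has $d(u)=0$. Since $D$ is $C$-norming, $\|u\|\le C\sup\{|d(u)|/\|d\|\colon d\in D\setminus\{0\}\}$, and I would argue that it suffices to test $u$ against $d\in D\cap M$: because $u\in X_M$ and both $X\cap M$ and $D\cap M$ are ``rich enough'' inside $M$, the supremum over $D$ is attained (up to $C$) already over $D\cap M$. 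Concretely, the norming inequality restricted to elements of $M$ holds by absoluteness—the statement that a given $d\in D$ nearly norms $u$ is a formula whose witness can be pulled into $M$ when $u$ is (approximately) in $M$. Hence $d(u)=0$ for all $d\in D\cap M$ forces $\|u\|=0$. For \ref{it:projection}, the map is well-defined and linear precisely because of the direct-sum decomposition from \ref{it:intersection-XM}; its norm bound $\le C$ again comes from $C$-norming applied to $u=P(u+v)$, testing against $D\cap M$ (which annihilates $v$), giving $\|u\|\le C\sup_{d\in D\cap M}|d(u)|/\|d\|=C\sup_{d\in D\cap M}|d(u+v)|/\|d\|\le C\|u+v\|$; closedness of the sum then follows from the bounded projection having closed range and kernel.

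For the equivalences I would run a cycle. The implication \ref{it:rangeIsX}$\Rightarrow$\ref{it:separation}: if every $x\in X$ splits as $u+v$, then a functional $x^*\in\overline{D\cap M}^{\,w^*}=((D\cap M)_\perp)^\perp$ vanishing on all of $X\cap M$ vanishes on $X_M$ and on $(D\cap M)_\perp$, hence on $X$, hence is $0$—so $X\cap M$ separates $\overline{D\cap M}^{\,w^*}$. For \ref{it:separation}$\Rightarrow$\ref{it:rangeIsX} one shows the closed subspace $X_M+(D\cap M)_\perp$ is all of $X$: if not, Hahn--Banach yields a nonzero $x^*$ annihilating it; annihilating $(D\cap M)_\perp$ puts $x^*\in((D\cap M)_\perp)^\perp=\overline{D\cap M}^{\,w^*}$, while annihilating $X_M\supset X\cap M$ contradicts separation. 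The equivalences with \ref{it:uniqueProjeciton} and \ref{it:uniqueProjeciton2} are then formal: given \ref{it:rangeIsX}, the projection from \ref{it:projection} is defined on all of $X$, has range $X_M$ and kernel $(D\cap M)_\perp$, and uniqueness follows from \ref{it:intersection-XM}; conversely a projection with those properties forces $X=P[X]\oplus\ker P=X_M+(D\cap M)_\perp$. The passage \ref{it:uniqueProjeciton}$\Leftrightarrow$\ref{it:uniqueProjeciton2} and the ``Moreover'' clause rest on the identity $\ker Q=(D\cap M)_\perp\Leftrightarrow(d=d\circ Q\text{ for all }d\in D\cap M)$: the direction $(\Leftarrow)$ gives $\ker Q\subset\bigcap_{d\in D\cap M}\ker d=(D\cap M)_\perp$ and, using $Q[X]=X_M$ together with \ref{it:intersection-XM}, the reverse inclusion; the direction $(\Rightarrow)$ follows since $d\circ Q$ and $d$ agree on $X_M$ (as $d\in D\cap M$ and $Q|_{X_M}=\Id$, because $X_M\subset\overline{\Span}(X\cap M)$ and $d(u)=d(Qu)$ for $u\in X\cap M$ is immediate from $Qu=u$) and on $(D\cap M)_\perp=\ker Q$ (both vanish).

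The main obstacle I anticipate is the norming step in \ref{it:intersection-XM} and the norm bound in \ref{it:projection}: one must be careful that the $C$-norming supremum over all of $D$ can be replaced (up to the same constant $C$) by the supremum over $D\cap M$ when the vector being normed lies in $X_M$. This is exactly where absoluteness enters—the assertion ``there exists $d\in D$ with $\|d\|\le 1$ and $|d(x)|$ close to the norming value'' must be reflected into $M$ for $x\in X\cap M$, and then one passes to the closure $X_M$ by continuity. Making this reflection precise (choosing the right formula so that Lemma~\ref{l:unique-M} applies) is the technical heart; the rest is bookkeeping with annihilators and the weak$^*$-closure identity from Lemma~\ref{l:basics}.
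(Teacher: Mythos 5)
Your proposal is correct and follows essentially the same route as the paper: the heart in both is the absoluteness argument pulling a nearly $C$-norming functional for $x\in X\cap M$ into $D\cap M$, yielding $\|u\|\le C\|u+v\|$ for $u\in X_M$, $v\in(D\cap M)_\perp$ (which gives \ref{it:intersection-XM} and \ref{it:projection}), followed by the identity $\overline{D\cap M}^{\,w^*}=((D\cap M)_\perp)^\perp$ from Lemma~\ref{l:basics} for the equivalences and the same annihilator computation for the ``Moreover'' clause. Your Hahn--Banach phrasing of \ref{it:separation}$\Rightarrow$\ref{it:rangeIsX} is just the contrapositive of the paper's direct computation that $(X_M+(D\cap M)_\perp)^\perp=\{0\}$.
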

\begin{proof}
Let $S$ be the union of the countable sets from the statements of Lemma \ref{l:basics} and \cite[Lemma 7]{CCS} and let $\Phi$ be the union of the finite lists of formulas from the statements of Lemma \ref{l:basics} and \cite[Lemma 7]{CCS}, enriched by the finitely many formulas used in the proof below. Let $M\prec (\Phi; S)$ be such that $D \in M$ and $M$ contains $X$.

We claim that for every $x\in X_M$ and every $y\in (D\cap M)_\perp$, it holds that $\|x\|\leq C\|x+y\|$. Indeed, fix $x\in X\cap M$ and a rational number $q>C$. By Lemma~\ref{l:unique-M} and the absoluteness of the following formula (and its subformulas)
\[
\exists d \in D\colon \qquad \|x\| < q\tfrac{|d(x)|}{\|d\|},
\]
there is $d\in D\cap M$ such that $\|x\| < q\tfrac{|d(x)|}{\|d\|}$. Thus for every $y\in (D\cap M)_\perp$ we have that
\[
\|x\|< q\tfrac{|d(x)|}{\|d\|} = q\tfrac{|d(x+y)|}{\|d\|}\leq q\|x+y\|.
\]
Since $q\in (C,\infty)\cap \qe$ was arbitrary, we obtain that $\|x\|\leq C\|x+y\|$, for every $x\in X \cap M$ and $y\in (D\cap M)_\perp$, which implies the claim. It is easy to see that \ref{it:intersection-XM} and \ref{it:projection} follow from this claim.

\smallskip

\noindent\ref{it:rangeIsX}$\Rightarrow$\ref{it:separation}: If \ref{it:rangeIsX} holds then for every $x^*\in \overline{D\cap M}^{\, w^*}$ with $x^* \ne 0$, there exists $x\in X_M$ and $y\in (D\cap M)_\perp$ such that $x^*(x+y)\neq 0$. Note that Lemma \ref{l:basics}\eqref{it:models-Banach}\ref{it: weakstar-closure-subspace} ensures that $\overline{D\cap M}^{\,w^*}=((D\cap M)_\perp)^\perp$. Therefore, we have that $x^*(x)=x^*(x+y) \ne 0$, which implies that $X_M$ separates the points of $\overline{D\cap M}^{\,w^*}$ and thus \ref{it:separation} follows from the density of $X \cap M$ in $X_M$.

\smallskip

\noindent\ref{it:separation}$\Rightarrow$\ref{it:rangeIsX}: If \ref{it:separation} holds, then, using Lemma~\ref{l:basics} \eqref{it:models-Banach}\ref{it: weakstar-closure-subspace}, we have
\[(X_M + (D\cap M)_\perp)^\perp \subset (X_M)^\perp \cap ((D\cap M)_\perp)^\perp = (X_M)^\perp\cap \overline{D\cap M}^{\,w^*} = \{0\},\]
so we obtain that $X_M + (D\cap M)_\perp \supset \{0\}_\perp = X$ and hence \ref{it:rangeIsX} holds.

\smallskip

The equivalence between \ref{it:rangeIsX} and \ref{it:uniqueProjeciton} is obvious.
\smallskip

Finally, in order to conclude the proof, let us show that if $Q\colon X\to X$ is a projection with $Q[X] = X_M$, then $\ker Q=(D \cap M)_\perp$, if and only if $d=d \circ Q$, for every $d \in D \cap M$. It is easy to see that if $\ker Q=(D \cap M)_\perp$, then $d=d \circ Q$, for every $d \in D \cap M$ and the converse follows from \ref{it:intersection-XM}.
\end{proof}

It was proved in \cite[Theorem 15]{kubis09} that a Banach space $X$ admits a projectional skeleton if and only if there exist $C\geq 1$ and a $C$-norming subspace $D$ of $X^*$ such that  $X = X_M + (D\cap M)_\perp$ holds for every {\bf countable} suitable model $M$. It follows from Proposition \ref{p:characterizationPSkeletonNotCountableModels} below that we do not need to assume countability of the model and moreover Proposition \ref{p:characterizationPSkeletonNotCountableModels} provides us a formula for the canonical projection from Lemma~\ref{l:projection}, when $D$ is a set induced by a projectional skeleton.

\begin{prop}\label{p:characterizationPSkeletonNotCountableModels}
For every suitable model $M$, the following holds: Let $X$ be a Banach space and $\mathfrak{s}=(P_s)_{s \in \Gamma}$ be a projectional skeleton on $X$. If $M$ contains $X$ and $\mathfrak{s}$, then $\Gamma\cap M$ is up-directed and the mapping $P_M\colon X \to X$ given by
\begin{equation}
    P_M(x):=\lim_{s \in \Gamma \cap M}P_s(x), \quad x \in X,
\end{equation}
is a well-defined bounded projection such that $P_M[X]=X_M$ and $\ker P_M=(D(\mathfrak{s})\cap M)_\perp$.
\end{prop}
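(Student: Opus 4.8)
The plan is to verify the three assertions in the order stated, deriving the analytic core (convergence of the defining net) from the stronger continuity property $(3')$ together with the $\sigma$-completeness of $\Gamma$, and then reading off the range and kernel from Lemma~\ref{l:projection}. Throughout I use the reduction recalled after Definition~\ref{def:subskeleton-isomorphic}, so that I may assume there is $C\ge 1$ with $\|P_s\|\le C$ for all $s\in\Gamma$ and that $(3')$ holds. We are given that $M$ contains $X$ and $\mathfrak{s}$; by absoluteness $M$ then also contains $X^{*}$ and the induced set $D:=D(\mathfrak{s})=\bigcup_{s\in\Gamma}P_s^{*}[X^{*}]$ is an element of $M$, and by Lemma~\ref{D-norming} this $D$ is a $C$-norming subspace of $X^{*}$, so Lemma~\ref{l:projection} is available for $X$, $D$ and $M$. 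First I would dispose of the easy facts. The set $\Gamma\cap M$ is nonempty and, using that $\Gamma$ is up-directed together with Lemma~\ref{l:unique-M} applied to the formula $\exists u\in\Gamma\,(s\le u\wedge t\le u)$, it is up-directed. Next I record two stabilisation facts obtained from Lemma~\ref{l:unique-M}: for every $u\in X\cap M$ there is $s\in\Gamma\cap M$ with $P_{s}u=u$ (the covering property of $\mathfrak{s}$), and for every $d\in D\cap M$ there is $s\in\Gamma\cap M$ with $P_{s}^{*}d=d$ (the definition of $D$); in both cases property~(2) of a projectional skeleton upgrades this to $P_{t}u=u$, respectively $P_{t}^{*}d=d$, for all $t\ge s$. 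Finally, for $s\in\Gamma\cap M$ the separable subspace $P_{s}[X]$ lies in $M$, hence $P_{s}[X]\subset X_{M}$; in particular $P_{s}(x)\in X_{M}$ for every $x\in X$.

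The heart of the argument is to show that the net $(P_{s}(x))_{s\in\Gamma\cap M}$ converges for every $x\in X$. Since $X$ is complete it suffices to prove the net is Cauchy, and concretely it is enough to find, for each $\varepsilon>0$, a reference index $s_{0}\in\Gamma\cap M$ with $\|P_{t}(x)-P_{s_{0}}(x)\|\le\varepsilon$ for all $t\ge s_{0}$ in $\Gamma\cap M$. I would argue by contradiction: if this fails for some $\varepsilon>0$, then I can build an increasing sequence $(s_{n})_{n}$ in $\Gamma\cap M$ with $\|P_{s_{n+1}}(x)-P_{s_{n}}(x)\|\ge\varepsilon$ for all $n$, at each step choosing $s_{n+1}\ge s_{n}$ that witnesses the failure with reference point $s_{n}$. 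By $\sigma$-completeness $s_{\infty}:=\sup_{n}s_{n}$ exists in $\Gamma$, and the stronger continuity property $(3')$ yields $P_{s_{n}}(x)\to P_{s_{\infty}}(x)$; in particular $\|P_{s_{n+1}}(x)-P_{s_{n}}(x)\|\to 0$, a contradiction. I expect this to be the main obstacle, and the key subtlety is that $s_{\infty}$ need not belong to $M$: property $(3')$ is invoked for the genuine supremum in $\Gamma$, which is precisely what allows the argument to work for arbitrary, possibly uncountable, models $M$.

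Granting convergence, I would set $P_{M}(x):=\lim_{s\in\Gamma\cap M}P_{s}(x)$. It is linear as a pointwise limit of linear maps, and $\|P_{M}\|\le C$ since $\|P_{s}\|\le C$; as each $P_{s}(x)\in X_{M}$ and $X_{M}$ is closed, $P_{M}[X]\subset X_{M}$. By the first stabilisation fact $P_{M}$ is the identity on $X\cap M$, hence, by continuity and density, on all of $X_{M}$; therefore $P_{M}$ is idempotent and $P_{M}[X]=X_{M}$. By the second stabilisation fact, for every $d\in D\cap M$ and $x\in X$ one has $d(P_{M}(x))=\lim_{s}(P_{s}^{*}d)(x)=d(x)$, that is $d=d\circ P_{M}$. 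Finally, applying the ``moreover'' clause of Lemma~\ref{l:projection} to the projection $P_{M}$ with $P_{M}[X]=X_{M}$ gives $\ker P_{M}=(D\cap M)_{\perp}=(D(\mathfrak{s})\cap M)_{\perp}$, which completes the proof.
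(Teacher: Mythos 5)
Your proof is correct and follows essentially the same route as the paper's: up-directedness of $\Gamma\cap M$ by absoluteness, identification of the range with $X_M$ via countable dense subsets of the separable ranges $P_s[X]$ lying in $M$, and the kernel via showing $d=d\circ P_M$ for every $d\in D(\mathfrak{s})\cap M$ and then invoking Lemma~\ref{l:projection}. The only difference is that you prove convergence of the net $(P_s(x))_{s\in\Gamma\cap M}$ directly from $\sigma$-completeness and the strengthened continuity condition $(3')$, whereas the paper outsources exactly this step to Lemma~11 of Kubi\'s; both rely on the same standing reduction to uniformly bounded skeletons satisfying $(3')$.
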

\begin{proof}
Let $S$ be the union of the countable sets from the statements of \cite[Lemma 7 and Lemma 8]{CCS} and Lemma~\ref{l:projection} and let $\Phi$ be the union of the finite lists of formulas from the statements of \cite[Lemma 7 and Lemma 8]{CCS} and Lemma~\ref{l:projection} enriched by the finitely many formulas needed in the proof below. Let $M\prec (\Phi; S)$ containing $X$ and $\mathfrak{s}$. Note that it follows from \cite[Lemma~8(1)]{CCS} that $\Gamma \cap M$ is up-directed and so \cite[Lemma~11]{kubis09} ensures that $P_M$ is a well-defined bounded projection on $X$ with $P_M[X]=\overline{\bigcup_{s \in \Gamma \cap M}P_s[X]}$.

First, we \emph{claim} that $P_M[X] = X_M$. Indeed, given any $s\in \Gamma\cap M$, since $P_s[X]$ is separable, by the absoluteness of the following formula (and its subformulas)
\[\exists C\subset X\colon\quad (C\text{ is countable and }\overline{P_s[C]} = P_s[X]),\]
we conclude that there exists a countable set $C\in M$ such that $\overline{P_s[C]} = P_s[X]$. Using \cite[Lemma 7 and 8]{CCS} we have that $C\subset M$ and $P_s[C]\subset X\cap M$, which implies that $P_s[X]\subset X_M$. And since $s\in \Gamma\cap M$ was arbitrary, we obtain $P_M[X]\subset X_M$. On the other hand, given $x\in X\cap M$, it follows from absoluteness that there exists $s\in \Gamma\cap M$ such that $x\in P_s[X]\subset P_M[X]$; hence, we also have that $X_M\subset P_M[X]$. This proves the claim.

Note that since $M$ contains $\mathfrak{s}$, it follows from absoluteness that $D(\mathfrak{s}) \in M$ and thus by Lemmas \ref{D-norming} and \ref{l:projection}, in order to conclude that $\ker P_M = (D(\mathfrak{s})\cap M)_\perp$, it suffices to show that $d = d\circ P_M$, for every $d\in D(\mathfrak{s})\cap M$. Given any $d\in D(\mathfrak{s})\cap M$, there is $s\in\Gamma$ with $d = P_s^*(d)$ and so it follows from absoluteness that there exists $s\in \Gamma\cap M$ such that $d = P_s^*(d) = d\circ P_s$, which implies that $d = d\circ P_M$.
\end{proof}

We believe that the projections from Lemma~\ref{l:projection} that were constructed in Proposition~\ref{p:characterizationPSkeletonNotCountableModels} are the key ingredients to handle inductive arguments in spaces with a projectional skeleton. Let us give it a name.

\begin{defin}\label{def:canonicalProjection}
Let $X$ be a Banach space, $C\geq 1$ and $D$ be a $C$-norming subspace of $X^*$. Given a set $M$, we say that $P_M$ is the \emph{canonical projection associated to $M$, $X$ and $D$} if it is the unique projection on $X$ satisfying $P_M[X]=X_M$ and $\ker P_M = (D\cap M)_\perp$. We say that a set $M$ \emph{admits canonical projection associated to $X$ and $D$} if there exists the canonical projection associated to $M$, $X$ and $D$.
\end{defin}

In Plichko spaces we may apply also the following lemma which we record here for further use.

\begin{lemma}\label{lem:countablysupports}
Let $X$ be a Banach space, $\mathfrak{s}$ be a projectional skeleton on $X$ and $A$ be a subset of $X$ that countably supports $D(\mathfrak{s})$. Then there exist a countable set $S$ and a finite list of formulas $\Phi$ such that every $M \prec (\Phi; S \cup \{A\})$, which contains $X$ and $\mathfrak{s}$, admits canonical projection $P_M$ associated to $X$ and $D(\mathfrak{s})$ and it holds that $P_M(x) \in \{0,x\}$, for every $x \in A$.
\end{lemma}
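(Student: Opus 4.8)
The plan is to build the countable set $S$ and the list $\Phi$ by taking those provided by Proposition~\ref{p:characterizationPSkeletonNotCountableModels}, enriched by the finitely many formulas used below, and to exploit the fact that, by the form of the statement, every model $M$ we consider satisfies $A \in M$ (since $A \in S \cup \{A\} \subset M$). The existence of the canonical projection comes for free: since $D(\mathfrak{s})$ is $C$-norming for some $C \ge 1$ by Lemma~\ref{D-norming} and $M$ contains both $X$ and $\mathfrak{s}$, Proposition~\ref{p:characterizationPSkeletonNotCountableModels} yields a bounded projection $P_M$ with $P_M[X] = X_M$ and $\ker P_M = (D(\mathfrak{s}) \cap M)_\perp$; by Definition~\ref{def:canonicalProjection} this is precisely the canonical projection associated to $M$, $X$ and $D(\mathfrak{s})$, so $M$ admits it. (Recall also, as in the proof of Proposition~\ref{p:characterizationPSkeletonNotCountableModels}, that $D(\mathfrak{s}) \in M$ follows from absoluteness.)

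It remains to prove $P_M(x) \in \{0, x\}$ for $x \in A$, which I would establish by a dichotomy according to whether $x \in M$. If $x \in M$, then $x \in X \cap M \subset X_M = P_M[X]$, and since $P_M$ restricts to the identity on its range we get $P_M(x) = x$. If $x \notin M$, the goal is to show $x \in \ker P_M = (D(\mathfrak{s}) \cap M)_\perp$, i.e.\ that $d(x) = 0$ for every $d \in D(\mathfrak{s}) \cap M$. Fix such a $d$. The set $\suppt_{A}(d) = \{y \in A \colon d(y) \ne 0\}$ is uniquely determined by the pair $(A, d) \in M \times M$, so it follows from absoluteness (Lemma~\ref{l:unique-M} applied to the formula defining it, which we add to $\Phi$) that $\suppt_{A}(d) \in M$. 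Crucially, $\suppt_{A}(d)$ is \emph{countable}, because $A$ countably supports $D(\mathfrak{s})$ and $d \in D(\mathfrak{s})$; hence, being a countable set belonging to $M$, it satisfies $\suppt_{A}(d) \subset M$ by \cite[Lemma~7(4)]{CCS}. Since $x \notin M$, we conclude $x \notin \suppt_{A}(d)$, that is $d(x) = 0$. As $d \in D(\mathfrak{s}) \cap M$ was arbitrary, $x \in \ker P_M$ and so $P_M(x) = 0$, completing the dichotomy.

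The main obstacle, and the only genuinely delicate point, is the step combining absoluteness with countability to pass from ``$\suppt_{A}(d) \in M$'' to ``$\suppt_{A}(d) \subset M$''. This is exactly where the hypothesis that $A$ countably supports $D(\mathfrak{s})$ enters: without countability, membership of the support in $M$ would not force its elements into $M$, and the argument that $x \notin M$ drives $x$ out of the support would break down. Everything else is bookkeeping, namely ensuring the formula defining $\suppt_{A}(d)$ (together with its existential closure) is placed in $\Phi$, and recording $D(\mathfrak{s}) \in M$ so that $D(\mathfrak{s}) \cap M$ is meaningful inside the model.
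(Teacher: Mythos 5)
Your proposal is correct and follows essentially the same route as the paper's own proof: the canonical projection is obtained from Proposition~\ref{p:characterizationPSkeletonNotCountableModels}, and the dichotomy $A\cap M\subset P_M[X]$ versus $A\setminus M\subset (D(\mathfrak{s})\cap M)_\perp=\ker P_M$ is established exactly as you do, via absoluteness giving $\suppt_A(d)\in M$ and countability (with \cite[Lemma~7(4)]{CCS}) giving $\suppt_A(d)\subset M$. The only cosmetic difference is that the paper also explicitly folds the countable set and formulas of \cite[Lemma~7]{CCS} into $S$ and $\Phi$, which your ``enriched by the finitely many formulas used below'' implicitly requires.
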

\begin{proof}
Let $S$ be the union of the countable sets from the statements of Proposition~\ref{p:characterizationPSkeletonNotCountableModels} and \cite[Lemma~7]{CCS} and let $\Phi$ be the union of the finite lists of formulas from the statements of Proposition~\ref{p:characterizationPSkeletonNotCountableModels} and \cite[Lemma~7]{CCS} enriched by the finitely many formulas used in the proof below .
Fix any $M \prec (\Phi; S \cup \{A\})$ such that $M$ contains $X$ and $\mathfrak{s}$. Consider $P_M$ from Proposition~\ref{p:characterizationPSkeletonNotCountableModels} (associated to $X$ and $D(\mathfrak{s})$). It is clear that $A \cap M \subset P_M[X]$.
Given any $x^* \in D(\mathfrak{s})\cap M$, by absoluteness we have that $\suppt_{A}(x^*) \in M$ and thus it follows from the countability of $\suppt_{A}(x^*)$ and \cite[Lemma~7(4)]{CCS} that $\suppt_{A}(x^*) \subset M$. This implies that $A \setminus M \subset (D(\mathfrak{s})\cap M)_\perp = \ker P_M$ and concludes the proof.
\end{proof}

\subsection{Proof of Theorem~\ref{thm:main1} and of other results used in further sections}\label{subsec:usedElsewhere}

We concentrate on the outcomes of Section~\ref{sec:models} that will be applied in Section~\ref{sec:mainResults}. The first outcome is Theorem~\ref{thm:main1}. We formulate a slightly more general result from which Theorem~\ref{thm:main1} immediately follows (we recall that the notion of a simple projectional skeleton was defined in Section~\ref{sec:prelim}; however, the fact that the constructed skeleton is even simple is just an additional feature related to the notion from \cite{CK14} but not used in the paper any further).

\begin{thm}\label{thm:subskeletons}
Let $X$ be a Banach space and $\{\mathfrak{s}_n\colon n \in \omega\}$ be a countable family of projectional skeletons on $X$ inducing the same set $D\subset X^*$. Then there exists a simple projectional skeleton on $X$ which is isomorphic to a subskeleton of $\mathfrak{s}_n$, for every $n\in\omega$.

Moreover, for every countable set $S$ and every finite list of formulas $\Phi$, there exists a family $\M$ consisting of countable suitable models for $\Phi$ and $S$ such that every $M\in\M$ admits canonical projection $P_M$ associated to $X$ and $D$, and
if $\M$ is ordered by inclusion, then the following holds:
\begin{itemize}
    \item $\F = \{X_M\colon M\in \M\}$ is a rich family and if we set $P_F:=P_M$, whenever $F = X_M$ with $M\in\M$, then  $(P_F)_{F\in\F}$ is a simple projectional skeleton on $X$ isomorphic to $(P_M)_{M\in\M}$, via the order isomorphism $\M\ni M\longmapsto X_M\in \F$.
    \item $(P_M)_{M\in\M}$ is isomorphic to a subskeleton of $\mathfrak{s}_n$, for every $n\in\omega$.
\end{itemize}
\end{thm}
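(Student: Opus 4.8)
The plan is to realize $\M$ as a collection of countable Skolem hulls and to let the desired simple skeleton be indexed by the subspaces $X_M$. First I would fix, once and for all, a finite list of formulas $\Phi'\supset\Phi$ and a countable set $S'\supset S$ containing $X$, $D$, all the skeletons $\mathfrak{s}_n$ and the objects $f,e,\ell_2(\kappa)$ from the proof of Proposition~\ref{prop:inclusions} (applied to our $X$, with $\kappa=\dens X$), with $\Phi'$ absorbing the formulas from Propositions~\ref{p:characterizationPSkeletonNotCountableModels} and \ref{prop:inclusions} as well as those needed for Lemma~\ref{l:skolem}. Using Theorem~\ref{thm:modelExists} I get a set $R\prec(\Phi';X\cup S'\cup\kappa)$, so that $\kappa\subset R$, and I fix the Skolem function $\psi\colon\PP(R)\to\PP(R)$ of Lemma~\ref{l:skolem}. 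Writing $\tilde\psi(A):=\psi(S'\cup A)$ (which is again monotone and idempotent, and inherits the relevant properties of $\psi$ because $S'\subset\tilde\psi(A)$ always), I set
\[
\M:=\{\tilde\psi(a)\colon a\subset\kappa,\ a\text{ countable}\}.
\]
Each $M\in\M$ is a countable suitable model with $S'\subset M$, hence it contains $X$ and every $\mathfrak{s}_n$; by Proposition~\ref{p:characterizationPSkeletonNotCountableModels} it therefore admits the canonical projection $P_M$ associated to $X$ and $D$. Crucially, this is the \emph{same} projection for every $n$, since it is determined by $P_M[X]=X_M$ and $\ker P_M=(D\cap M)_\perp$, and all the $\mathfrak{s}_n$ induce the same $D$.

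The key step, which I expect to be the main obstacle, is to prove that $M\longmapsto X_M$ is an order isomorphism of $(\M,\subset)$ onto $\F=\{X_M\colon M\in\M\}$. One implication is immediate; for the converse I would chain two facts. Proposition~\ref{prop:inclusions} gives $X_M\subset X_N\Leftrightarrow M\cap\kappa\subset N\cap\kappa$, while the variant of Lemma~\ref{l:skolem}\eqref{it:construction} for $\tilde\psi$ (valid precisely because our seeds lie in $J:=\kappa$) yields $M\cap\kappa\subset N\Leftrightarrow M\subset N$; combining them gives $X_M\subset X_N\Leftrightarrow M\subset N$, and in particular injectivity. This is exactly the point where the ordinal seeds and the combinatorial property \eqref{it:construction} must be used in concert with Proposition~\ref{prop:inclusions}. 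Richness of $\F$ then follows from the description $X_M=f\big[\closedSpan\{e_i\colon i\in\kappa\cap M\}\big]$ used in the proof of Proposition~\ref{prop:inclusions}: every separable subspace is captured by seeding $\M$ with the (countable) union of the $\kappa$-supports of a dense sequence, and closure under increasing unions comes from Lemma~\ref{l:skolem}\eqref{it:union} together with $X_{\bigcup_k M_k}=\overline{\bigcup_k X_{M_k}}$ (taking the increasing seeds $a_k=M_k\cap\kappa$).

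With the order isomorphism in hand, the coherence of $(P_F)_{F\in\F}$, where $P_F:=P_M$ for $F=X_M$, is purely algebraic. For $N\subset M$ one has $X_N\subset X_M$ and $(D\cap M)_\perp\subset(D\cap N)_\perp$; since $P_M$ fixes its range $X_M\supset X_N$ we get $P_M\circ P_N=P_N$, and since $P_Mx-x\in(D\cap M)_\perp\subset(D\cap N)_\perp=\ker P_N$ we get $P_N\circ P_M=P_N$. Together with $P_M[X]=X_M$ and the separability of $X_M$, this shows that $(P_F)_{F\in\F}$ is a simple projectional skeleton, isomorphic to $(P_M)_{M\in\M}$ via $M\mapsto X_M$.

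Finally, to exhibit $(P_M)_{M\in\M}$ as a subskeleton of each $\mathfrak{s}_n=(P^{(n)}_s)_{s\in\Gamma_n}$, I would set $\phi_n(M):=\sup(\Gamma_n\cap M)$, which exists because $\Gamma_n\cap M$ is countable and up-directed (Proposition~\ref{p:characterizationPSkeletonNotCountableModels}) while $\Gamma_n$ is $\sigma$-complete. Passing to the strengthened continuity condition (3') one obtains $P^{(n)}_{\phi_n(M)}=\lim_{s\in\Gamma_n\cap M}P^{(n)}_s=P_M$. The map $\phi_n$ is order preserving, and it is order reflecting because $\phi_n(M)\le\phi_n(N)$ forces $X_M\subset X_N$ and hence $M\subset N$ by the previous paragraph, so $\phi_n$ is an order isomorphism onto $\Gamma_n':=\phi_n[\M]$. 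A short supremum computation shows $\Gamma_n'$ is $\sigma$-closed in $\Gamma_n$, and the skeleton axioms for $(P^{(n)}_s)_{s\in\Gamma_n'}$ are inherited from $\mathfrak{s}_n$ (with covering of $X$ coming from richness of $\F$), so it is a subskeleton of $\mathfrak{s}_n$ isomorphic, via $\phi_n$, to $(P_M)_{M\in\M}$. The first assertion of the theorem then follows by composing the isomorphisms $(P_F)_{F\in\F}\cong(P_M)_{M\in\M}\cong(P^{(n)}_s)_{s\in\Gamma_n'}$.
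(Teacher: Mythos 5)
Your proposal is correct and follows essentially the same route as the paper: the same Skolem-hull family $\M$ seeded by countable subsets of $\kappa$, the same combination of Lemma~\ref{l:skolem}\eqref{it:construction} with Proposition~\ref{prop:inclusions} to get the order isomorphism $M\mapsto X_M$, the same algebraic verification of the skeleton relations, and the same map $M\mapsto\sup(\Gamma_n\cap M)$ to realize $(P_M)_{M\in\M}$ as a subskeleton of each $\mathfrak{s}_n$. The only cosmetic deviation is that you obtain cofinality of $\F$ via supports in $\ell_2(\kappa)$ rather than via a dense map $d\colon\kappa\to X$, which is equally valid.
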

\begin{proof} Let $\kappa=\dens{X}$ and $d\colon \kappa \to X$ be such that $\{d(i)\colon i<\kappa\}$ is dense in $X$. Fix a countable set $S$ and a finite list of formulas $\Phi$. Let $\Phi'$ be a finite list of formulas closed on taking subformulas that contains $\Phi$ and the union of the lists of formulas from the statements of Proposition~\ref{p:characterizationPSkeletonNotCountableModels}, Proposition~\ref{prop:inclusions} and \cite[Lemma~7]{CCS}.
Let $S'$ be a countable set such that $S \subset S'$, $S'$ contains $X$, $d$, $\mathfrak{s}_n$, for every $n \in \omega$ and $S'$ contains the union of the countable sets from the statements of Proposition~\ref{p:characterizationPSkeletonNotCountableModels}, Proposition~\ref{prop:inclusions} and \cite[Lemma~7]{CCS}.
By \cite[Theorem~IV.7.4]{kunenBook}, there exists a set $R$ such that $R\prec(\Phi';S'\cup \kappa)$. Let $\psi\colon \PP(R)\to\PP(R)$ be the Skolem function given by Lemma~\ref{l:skolem}. Consider the following family of countable subsets of $R$
\[
    \M:=\{\psi(A\cup S')\colon A\in[\kappa]^{\leq\omega}\}.
\]
Note that for every $M\in\M$ it holds that $M\prec (\Phi';S')$ and therefore it follows from Proposition~\ref{p:characterizationPSkeletonNotCountableModels} that $M$ admits canonical projection $P_M$ associated to $X$ and $D$. In order to see that the mapping $\M\ni M\mapsto X_M\in \F$ is an order-isomorphism, if $\M$ and $\F$ are ordered by inclusion, note that it follows from Lemma~\ref{l:skolem}\eqref{it:construction} applied to $J=\kappa\cup S'$ that for every $M,N\in\M$, it holds that $M\subset N$ if and only if $M\cap \kappa\subset N\cap \kappa$, which is by Proposition~\ref{prop:inclusions} equivalent to $X_M\subset X_N$.

Now let us show that $\F$ is a rich family. If $Y\subset X$ is a separable subspace, there is $A\in[\kappa]^{\leq \omega}$ such that $Y\subset \overline{\{d(i)\colon i\in A\}}$. By \cite[Lemma~7(2)]{CCS} we have that $\overline{\{d(i)\colon i\in A\}}\subset X_{\psi(A\cup S')}$. Hence $\F$ is cofinal. Now let $(M_n)_{n\in\en}$ be an increasing sequence in $\M$ and set $M_\infty:=\bigcup_{n \in \en} M_n$. It follows from Lemma~\ref{l:skolem}\eqref{it:union} that $M_\infty$ belongs to $\M$ and it is easy to check that $X_{M_\infty} = \overline{\bigcup_{n\in\en} X_{M_n}}$. Thus, $\F$ is a rich family.

Moreover, if $M,N \in \M$ satisfy $X_M\subset X_N$, then $P_M(X)=X_M\subset X_N=P_N(X)$ and $\ker P_M = (D\cap M)_\perp\supset (D\cap N)_\perp = \ker P_N$, which implies that $P_M = P_N\circ P_M = P_M\circ P_N$. Therefore, we conclude that $(P_F)_{F\in \F}$ is a simple projectional skeleton on $X$ that is isomorphic to $(P_M)_{M \in \M}$.

Finally, fix any $n\in\en$ and let us show that $(P_M)_{M \in \M}$ is isomorphic to a subskeleton of $\mathfrak{s}_n$. Consider the mapping $\phi\colon \M\to\Gamma_n$ given by $\phi(M)=\sup(\Gamma_n\cap M)$, where $\mathfrak{s}_n=(P^n_s)_{s \in \Gamma_n}$. Note that $P_M = P^n_{\phi(M)}$, for every $M \in \M$. Indeed, since $\Gamma_n\cap M$ is countable and up-directed, there is an increasing sequence $(s_k)_{k\in\omega}$ from $\Gamma_n\cap M$ with $\sup_k s_k = s = \phi(M)$. Then, using that the sequence $\{s_k\colon k\in\omega\}$ is cofinal in $\Gamma_n\cap M$, we obtain that $P_M = P_{\{s_k\colon k\in\omega\}} = P_{\phi(M)}^n$ as claimed above. To see that $\phi$ is an order-isomorphism onto its image, fix $M,N \in \M$. If $M\subset N$, then it is clear that $\phi(M)\leq \phi(N)$. On the other hand, if $\phi(M)\leq \phi(N)$, then $X_M = P_M[X] = P_{\phi(M)}^n[X]\subset P_{\phi(N)}^n[X] = X_N$ and thus $M\subset N$. It remains to show that $\phi[\M]$ is a $\sigma$-closed subset of $\Gamma_n$. Let $(M_k)_{k \in \en}$ be an increasing sequence in $\M$. As discussed previously we have that $M_\infty:=\bigcup_{n \in \en} M_n \in \M$ and it is easy to see that $\phi(M_\infty)=\sup_{n \in \en} \phi(M_n)$.
\end{proof}

\begin{remark}\label{ourresult-generalizes}
As mentioned above, Theorem ~\ref{thm:subskeletons} is a generalization of \cite[Theorem 4.1]{CK14}. Note however that in the proof of \cite[Theorem 4.1]{CK14} there is a gap which was fixed only later. Namely, the authors used the fact that any Banach space with a projectional skeleton has a Markushevich basis, which was not known at the time and it was proved later in \cite{kal20}. Our proof of Theorem~\ref{thm:subskeletons} is different from the proof of \cite[Theorem 4.1]{CK14} - the difference is that, inspired by the proof of \cite[Theorem 4]{C18}, instead of the existence of Markushevich basis we use the deep result by H. Toru\'{n}czyk that all infinite-dimensional Banach spaces of the same density are topologically homeomorphic.
\end{remark}

The following seems to be the crucial property of projectional skeletons which we need further in our inductive arguments.

\begin{cor}\label{cor:plichko}
Let $X$ be a Banach space, $\mathfrak{s}$ be a projectional skeleton on $X$ and $A$ be a subset of $X$ that countably supports $D(\mathfrak{s})$. Then there exists a projectional skeleton $(P_s)_{s\in \Gamma}$ on $X$ that is isomorphic to a subskeleton of $\mathfrak{s}$ and such that $P_s(x) \in \{0,x\}$, for every $x \in A$ and every $s \in \Gamma$.
\end{cor}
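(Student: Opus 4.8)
The plan is to derive the corollary by combining Lemma~\ref{lem:countablysupports} with the ``moreover'' part of Theorem~\ref{thm:subskeletons}, applied to the constant family. Since $A$ countably supports $D(\mathfrak{s})$, Lemma~\ref{lem:countablysupports} furnishes a countable set $S$ and a finite list of formulas $\Phi$ such that every $M \prec (\Phi; S \cup \{A\})$ which contains $X$ and $\mathfrak{s}$ admits a canonical projection $P_M$ associated to $X$ and $D(\mathfrak{s})$, and this projection satisfies $P_M(x) \in \{0, x\}$ for every $x \in A$. The key idea is to feed precisely this $\Phi$ together with a countable base set extending $S \cup \{A\}$ (and large enough that every suitable model contains $X$ and $\mathfrak{s}$) into Theorem~\ref{thm:subskeletons}.

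Concretely, I would apply Theorem~\ref{thm:subskeletons} to the trivial family $\mathfrak{s}_n := \mathfrak{s}$, $n \in \omega$---which obviously induces the common set $D := D(\mathfrak{s})$---with input $\Phi$ and $S \cup \{A\}$. This produces a family $\M$ of countable suitable models for $\Phi$ and $S \cup \{A\}$, each admitting a canonical projection $P_M$ associated to $X$ and $D$, such that $(P_M)_{M \in \M}$ is a projectional skeleton on $X$ isomorphic to a subskeleton of $\mathfrak{s}$. By construction every $M \in \M$ contains $X$, $\mathfrak{s}$ and, since $A$ lies in the base set, also $A$. Thus each $M \in \M$ meets the hypotheses of Lemma~\ref{lem:countablysupports}, and we conclude that $P_M(x) \in \{0, x\}$ for every $x \in A$ and every $M \in \M$. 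Writing $(P_M)_{M \in \M}$ as $(P_s)_{s \in \Gamma}$ then gives a projectional skeleton isomorphic to a subskeleton of $\mathfrak{s}$ with the required property, which is exactly the statement of the corollary.

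The argument therefore reduces to bookkeeping and requires no further transfinite induction: the two previous results do all the work. I expect the only point needing care to be the alignment of the models delivered by Theorem~\ref{thm:subskeletons} with the precise hypotheses of Lemma~\ref{lem:countablysupports}---namely that they are absolute for the same list $\Phi$ and that they contain $A$, $X$ and $\mathfrak{s}$. This is ensured by absorbing $A$, $X$ and $\mathfrak{s}$ into the countable base set passed to Theorem~\ref{thm:subskeletons}, after which the desired conclusion is immediate.
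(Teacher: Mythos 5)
Your proposal is correct and is precisely the argument the paper intends: the paper's proof consists of the single line ``This is a consequence of Lemma~\ref{lem:countablysupports} and Theorem~\ref{thm:subskeletons}'', and your write-up just makes the bookkeeping explicit (feeding the $\Phi$ and $S\cup\{A\}$ from Lemma~\ref{lem:countablysupports} into the ``moreover'' part of Theorem~\ref{thm:subskeletons} applied to the constant family, and using uniqueness of the canonical projection to identify the two constructions).
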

\begin{proof}
This is a consequence of Lemma~\ref{lem:countablysupports} and Theorem~\ref{thm:subskeletons}.
\end{proof}

\begin{remark}
Note that if there exist a projectional skeleton $(P_s)_{s\in\Gamma}$ on a Banach space $X$ and a linearly dense set $A\subset X$ such that $\{P_sx\colon s\in\Gamma\}\subset\{0,x\}$ for every $x\in A$, then we easily obtain that $P_s P_tx=P_t P_sx$, for every $x\in A$, which implies that the skeleton is commutative. Therefore, it seems that our methods apply to subclasses of Plichko spaces (equivalently, as mentioned in Section~\ref{sec:prelim}, spaces admitting a commutative projectional skeleton).
\end{remark}

In the following technical result, we construct a transfinite sequence of projections using the method of suitable model. An alternative way of constructing this sequence would be to use the approach from \cite[Section 2]{kal20}. One could deduce many other properties of the projections constructed in the proof of Proposition~\ref{prop:pri}, but we formulate only the ones which will be used further (we recall that shrinkingness-related sets $\T(\mathfrak{s}, A)$ were defined in Section~\ref{sec:prelim}).

\begin{prop}\label{prop:pri}
Let $X$ be a Banach space and put $\kappa:=\dens X$. Let $A\subset X$ be a bounded set and let $\mathfrak{s}=(P_s)_{s\in\Gamma}$ be a projectional skeleton on $X$ such that $A$ countably supports $D(\mathfrak{s})$ and $P_s x\in\{0,x\}$, for every $x\in A$ and
every $s \in \Gamma$. Then there exists a long sequence of bounded projections $\mathfrak{r} = (R_\alpha)_{\alpha\leq\kappa}$ satisfying the following properties

\begin{enumerate}[label=(R\arabic*)] 
\newcounter{saveenum} 
    \item\label{it:P0-Pkappa} $R_0=0$ and $R_\kappa=\Id$.
    \item\label{it:comutative} For every $\alpha<\beta$, we have that $R_\alpha\circ R_\beta = R_\beta\circ R_\alpha = R_\alpha$.
     \item\label{it:upDirectedLimit} Let $\alpha\le\kappa$, let $\eta\colon[0,\alpha)\to \kappa$ be an increasing function and let $\xi \le\kappa$ be a limit ordinal with $\sup_{\beta<\alpha} \eta(\beta) = \xi$. Then $\lim_{\beta < \alpha} R_{\eta(\beta)}(x) = R_\xi(x)$, for every $x\in X$.
     \item\label{it:skeletonOnSubset} For every $\alpha\in[0,\kappa)$, we have that $\dens(R_\alpha[X]) < \kappa$.
     \item\label{it:niceCanonicalPRI} $R_{\alpha}x\in\{0,x\}$, for every $\alpha\le\kappa$ and $x\in A$;
      \item \label{it:subskeletonOnDifferences} For every $\alpha < \kappa$ there exists a $\sigma$-closed subset $\Gamma_\alpha\subset \Gamma$ such that the family $\mathfrak{s}_\alpha = (P_s|_{(R_{\alpha+1}-R_\alpha)[X]})_{s\in \Gamma_\alpha}$ is a projectional skeleton on $(R_{\alpha+1}-R_\alpha)[X]$.

      Moreover, $(R_{\alpha+1}-R_\alpha)[A]$ countably supports $D(\mathfrak{s}_\alpha)$.
     \item\label{it:AeShrink} $\T(\mathfrak{s}, A)\subset \T(\mathfrak{r}, A)$.
    \setcounter{saveenum}{\value{enumi}} 
\end{enumerate}
\end{prop}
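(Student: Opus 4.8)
The plan is to realize $\mathfrak{r}$ by a continuous chain of canonical projections associated to suitable models, exactly in the spirit of the proof of Theorem~\ref{thm:subskeletons}. First I would pass to a subskeleton so that $\|P_s\|\le C$ for some $C\ge 1$ and the pointwise continuity property $(3')$ holds; this changes neither $D(\mathfrak{s})$ nor the hypotheses, and it only enlarges $\T(\mathfrak{s},A)$ (every sequence in the subskeleton is one in $\mathfrak{s}$), so proving \ref{it:AeShrink} for the smaller skeleton suffices. Then fix $d\colon\kappa\to X$ with dense range, a finite list $\Phi$ and a countable $S\ni X,\mathfrak{s},d,A$ absorbing the data of Proposition~\ref{p:characterizationPSkeletonNotCountableModels}, Lemma~\ref{lem:countablysupports} and \cite[Lemma~7]{CCS}, take $R\prec(\Phi;S\cup\kappa)$ and the Skolem function $\psi$ from Lemma~\ref{l:skolem}. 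For $1\le\alpha\le\kappa$ put $M_\alpha:=\psi(\alpha\cup S)$ and let $R_\alpha:=P_{M_\alpha}$ be the canonical projection of Proposition~\ref{p:characterizationPSkeletonNotCountableModels} (associated to $X$ and $D(\mathfrak{s})$), and set $R_0:=0$.

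The ``soft'' properties follow from the calculus of $\psi$ and the basic facts about canonical projections. For \ref{it:P0-Pkappa}: $R_0=0$ by definition, while $\kappa\subset M_\kappa$ forces $\{d(i):i<\kappa\}\subset X\cap M_\kappa$, so $X_{M_\kappa}=X$ and $R_\kappa=\Id$. For $1\le\alpha<\beta$, monotonicity of $\psi$ gives $M_\alpha\subset M_\beta$, hence $X_{M_\alpha}\subset X_{M_\beta}$ and $\ker R_\alpha=(D(\mathfrak{s})\cap M_\alpha)_\perp\supset\ker R_\beta$; the elementary fact that a range inclusion together with a reverse kernel inclusion forces $R_\alpha R_\beta=R_\beta R_\alpha=R_\alpha$ yields \ref{it:comutative} (the cases involving $R_0$ are trivial). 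Property \ref{it:skeletonOnSubset} is immediate from $\dens X_{M_\alpha}\le|M_\alpha|\le\max(\omega,|\alpha|)<\kappa$, and \ref{it:niceCanonicalPRI} is exactly the conclusion of Lemma~\ref{lem:countablysupports} (this is why $A$ is put into $S$). For \ref{it:upDirectedLimit}, the union property Lemma~\ref{l:skolem}\eqref{it:union} gives $\bigcup_{\beta<\alpha}M_{\eta(\beta)}=\psi(\xi\cup S)=M_\xi$ (the first term is irrelevant should $\eta(0)=0$), whence $X_{M_\xi}=\overline{\bigcup_{\beta}X_{M_{\eta(\beta)}}}$; since each $R_{\eta(\beta)}=R_{\eta(\beta)}R_\xi$ and $\sup_\beta\|R_{\eta(\beta)}\|\le C$, a density argument on the dense subspace $\bigcup_\beta X_{M_{\eta(\beta)}}$ of $X_{M_\xi}$ gives $R_{\eta(\beta)}x=R_{\eta(\beta)}R_\xi x\to R_\xi x$ for every $x\in X$.

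The transfer of shrinkingness \ref{it:AeShrink} is where the hypothesis $P_s x\in\{0,x\}$ pays off, as it lets me compute $\rho_B$ combinatorially. For $a\in A$ Lemma~\ref{lem:countablysupports} gives $R_\gamma a=a$ if $a\in M_\gamma$ and $R_\gamma a=0$ otherwise; hence for increasing $(\alpha_n)$ with $\alpha=\sup_n\alpha_n$ (so $M_\alpha=\bigcup_n M_{\alpha_n}$) and $B\subset A$,
\[\rho_B(R_{\alpha_n}^*x^*,R_\alpha^*x^*)=\sup\{|x^*(b)|:b\in B\cap(M_\alpha\setminus M_{\alpha_n})\}.\]
Suppose $\mathfrak{s}$ is $(B,\varepsilon)$-shrinking in $x^*$ but this $\limsup$ exceeded $\varepsilon\|x^*\|$. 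Then, along a subsequence with $(m_n)$ strictly increasing, I would pick distinct witnesses $b_n\in B\cap(M_{\alpha_{m_n}}\setminus M_{\alpha_n})$ and $\delta>0$ with $|x^*(b_n)|>(\varepsilon+\delta)\|x^*\|$; choosing $s'_n\in\Gamma\cap M_{\alpha_{m_n}}$ with $P_{s'_n}b_n=b_n$ and building an increasing $(s_k)$ in $\Gamma$ with $s_k\in\Gamma\cap M_{\alpha_{m_k}}$ and $s_k\ge s'_0,\dots,s'_k$, I get, for $s:=\sup_k s_k$, that $b_n\in\rng P_{s_n}\subset\rng P_s$ while $P_{s_k}b_n=0$ whenever $m_k\le n$ (since then $s_k\in\Gamma\cap M_{\alpha_n}$ and $b_n\notin M_{\alpha_n}$). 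Hence $\rho_B(P_{s_k}^*x^*,P_s^*x^*)>(\varepsilon+\delta)\|x^*\|$ for every $k$, contradicting the $(B,\varepsilon)$-shrinkingness of $\mathfrak{s}$ in $x^*$. Thus $(\varepsilon,B,x^*)\in\T(\mathfrak{r},A)$.

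The genuinely hard point is \ref{it:subskeletonOnDifferences}. Writing $E:=R_{\alpha+1}-R_\alpha$, a projection onto $Y_\alpha:=(R_{\alpha+1}-R_\alpha)[X]$ of density $<\kappa$, I want to restrict $\mathfrak{s}$ to $Y_\alpha$. For $s\in\Gamma\cap M_{\alpha+1}$ one has $R_{\alpha+1}P_s=P_sR_{\alpha+1}=P_s$, so $P_s$ preserves $Y_\alpha$ precisely when it preserves $\ker R_\alpha$, i.e.\ when $R_\alpha P_s=R_\alpha P_s R_\alpha$; in that case $P_s|_{Y_\alpha}$ is a projection on $Y_\alpha$ with separable range $P_s[Y_\alpha]\subset P_s[X]$. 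I would therefore take $\Gamma_\alpha$ to be the $\sigma$-closure in $\Gamma$ of $\{s\in\Gamma\cap M_{\alpha+1}:R_\alpha P_s=R_\alpha P_s R_\alpha\}$, which is $\sigma$-closed since $P_{s_n}\to P_s$ strongly along increasing sequences. The crux, and the main obstacle, is to prove that $\Gamma_\alpha$ is up-directed and that $(P_s|_{Y_\alpha})_{s\in\Gamma_\alpha}$ covers $Y_\alpha$, so that it really is a projectional skeleton; the restrictiveness of the condition $R_\alpha P_s=R_\alpha P_s R_\alpha$ for $s\in M_{\alpha+1}\setminus M_\alpha$ is exactly the delicate issue. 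I expect this to be transparent in the commutative case, which is the relevant one here: the hypotheses already force $P_sP_t=P_tP_s$ on $\closedSpan A$ (cf.\ the Remark preceding the statement, $A$ being linearly dense in the applications), and then \emph{every} $s\in\Gamma\cap M_{\alpha+1}$ preserves $Y_\alpha$, $\Gamma_\alpha=(\Gamma\cap M_{\alpha+1})_\sigma$, and covering follows from $\mathfrak{s}$ restricting to a skeleton on $X_{M_{\alpha+1}}$ together with $EP_s=P_sE$. Finally, the ``moreover'' clause is soft: for $a\in A$ the computation above gives $Ea\in\{0,a\}$ with $Ea=a$ exactly when $a\in M_{\alpha+1}\setminus M_\alpha$, so $(R_{\alpha+1}-R_\alpha)[A]\subset A\cup\{0\}$ and the supports of elements of $D(\mathfrak{s}_\alpha)$ along this set are contained in the corresponding (countable) supports along $A$.
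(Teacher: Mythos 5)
Your overall construction --- canonical projections $R_\alpha=P_{M_\alpha}$ attached to an increasing, continuous chain of suitable models of size at most $\max(\omega,|\alpha|)$ --- is exactly the paper's, and your treatments of \ref{it:P0-Pkappa}--\ref{it:niceCanonicalPRI} and of \ref{it:AeShrink} are correct. In particular, your combinatorial identification $\rho_B(R_{\alpha_n}^*x^*,R_\alpha^*x^*)=\sup\{|x^*(b)|\colon b\in B\cap(M_\alpha\setminus M_{\alpha_n})\}$ and the interleaved choice of indices $s_k$ is the same contradiction argument the paper runs (it builds two sequences $(s_j)$, $(k_j)$ with the identical alternation of $P_{s_{2j+1}}x_{k_j}=0$ and $P_{s_{2j+2}}x_{k_j}=x_{k_j}$).

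The gap is \ref{it:subskeletonOnDifferences}, and it is real. Your reduction to ``the commutative case'' uses that $A$ is linearly dense, which is not among the hypotheses of the proposition (nor of Theorem~\ref{thm:constructionSPRI}, where the proposition is applied inductively with $A$ replaced by $(R_{\alpha+1}-R_\alpha)[A]$, again with no density assumption). Without linear density, $P_sx\in\{0,x\}$ on $A$ only yields commutativity on $\closedSpan A$, which gives no control of $R_\alpha P_s$ on the rest of $X$, so your candidate index set $\{s\in\Gamma\cap M_{\alpha+1}\colon R_\alpha P_s=R_\alpha P_sR_\alpha\}$ could a priori fail to be up-directed or to cover $Y_\alpha$ --- precisely the ``delicate issue'' you flag but do not resolve. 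The paper circumvents this entirely: by \cite[Proposition~2.3(iii)]{kal20} the family $(P_s|_{R_{\alpha+1}[X]})_{s\in(\Gamma\cap M_{\alpha+1})_\sigma}$ is a projectional skeleton on $R_{\alpha+1}[X]$ with induced set $D_{\alpha+1}$; since every $d\in D(\mathfrak{s})\cap M_\alpha$ restricts to an element of $D_{\alpha+1}$ and $\ker R_\alpha=(D(\mathfrak{s})\cap M_\alpha)_\perp$, one gets
\[(R_{\alpha+1}-R_\alpha)[X]=\bigcap_{d\in D(\mathfrak{s})\cap M_\alpha}\ker d|_{R_{\alpha+1}[X]},\]
so the difference space is $w(R_{\alpha+1}[X],D_{\alpha+1})$-closed, and \cite[Lemma~2.1]{kal20} then yields a $\sigma$-closed set $\Gamma_\alpha$ of indices whose projections restrict to a skeleton on it. You need this (or an equivalent argument not relying on commutativity) to close \ref{it:subskeletonOnDifferences}; the rest of your proof stands.
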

\begin{proof}
Let $S$ be union of the countable sets from the statements of Proposition~\ref{p:characterizationPSkeletonNotCountableModels} and Lemma \ref{lem:countablysupports} and let $\Phi$ be the union of the finite lists of formulas from the statements of Proposition~\ref{p:characterizationPSkeletonNotCountableModels} and Lemma~\ref{lem:countablysupports} enriched by the finitely many formulas used in the proof below. Let $\{d_\alpha\colon \alpha<\kappa\}$ be a dense set in $X$. Using Theorem~\ref{thm:modelExists}, we construct a transfinite sequence of suitable models $(M_\alpha)_{\alpha\leq \kappa}$ such that for every $\alpha\in[0,\kappa]$ we have
\begin{itemize}
    \item $\vert M_\alpha \vert \le \max(\omega, \vert \alpha \vert)$, $M_\alpha\prec (\Phi;S \cup \{A\})$ and $M_\alpha$ contains $X$ and $\mathfrak{s}$,
    \item $M_{\alpha+1}\supset M_\alpha\cup\{d_\alpha\}$ whenever $\alpha\neq\kappa$,
    \item $M_\alpha = \bigcup_{\beta<\alpha} M_\beta$, if $\alpha$ is a limit ordinal (in order to see that an increasing union of suitable models is a suitable model we use e.g. \cite[Lemma 2.1]{C12}).
\end{itemize}
Set $R_0:=0$ and for $\alpha\in(0,\kappa]$ we let $R_\alpha$ be the canonical projection associated to $M_\alpha$, $X$ and $D(\mathfrak{s})$ given by Proposition~\ref{p:characterizationPSkeletonNotCountableModels}, that is, it holds that $R_{\alpha}[X] = X_{M_\alpha}$, $\ker R_{\alpha} = (D(\mathfrak{s})\cap M_\alpha)_\perp$ and
\begin{equation}\label{eq:formulaUsingLimit}
R_{\alpha}x = \lim_{s\in \Gamma\cap M_\alpha} P_s x,\qquad x\in X,
\end{equation}
where $\Gamma\cap M_\alpha$ is an up-directed set. Since $M_\kappa\supset \{d_\alpha\colon \alpha<\kappa\}$, we have that $R_\kappa[X] = X$ and thus \ref{it:P0-Pkappa} holds. Note that \ref{it:comutative} follows immediately from \cite[Lemma 2.2]{kal20}, since the sequence $(\Gamma\cap M_\alpha)_{\alpha\leq \kappa}$ is increasing. In order to prove \ref{it:upDirectedLimit}, pick $\alpha$, $\eta$ as in \ref{it:upDirectedLimit} and fix $x\in X$ and $\varepsilon>0$. Find $s_0\in \Gamma\cap M_\xi$ such that for every $s\geq s_0$, $s\in M_\xi\cap \Gamma$ we have $\|P_sx - R_\xi x\|\leq \varepsilon$. Since $M_\xi = \bigcup_{\beta<\alpha} M_{\eta(\beta)}$, there is $\beta_0<\alpha$ with $s_0\in M_{\eta(\beta_0)}$. Then for every $\beta \in [\beta_0,\alpha)$ we have
\[
\|R_{\eta(\beta)}(x) - R_{\xi}x\| = \lim_{s\in \Gamma\cap M_{\eta(\beta)},s\geq s_0} \|P_s(x) - R_{\xi}x\| \leq \varepsilon,
\]
which implies that $\lim_{\beta<\alpha} R_{\eta(\beta)}(x) = P_{M_\xi}(x)=R_\xi(x)$ and so \ref{it:upDirectedLimit} holds. Clearly \ref{it:skeletonOnSubset} follows from the fact that $R_\alpha[X] = X_{M_\alpha}$ and $|M_\alpha| < \kappa$ for $\alpha<\kappa$ and \ref{it:niceCanonicalPRI} is a consequence of the formula \eqref{eq:formulaUsingLimit}.

In order to prove \ref{it:subskeletonOnDifferences}, fix $\alpha < \kappa$. By \cite[Proposition 2.3 (iii)]{kal20}, the family $(P_s|_{R_{\alpha+1}[X]})_{s\in (\Gamma\cap M_{\alpha+1})_\sigma}$ is a projectional skeleton on $R_{\alpha+1}[X]$ with induced set given by \[D_{\alpha + 1} := \bigcup_{s\in (\Gamma\cap M_{\alpha+1})_\sigma}\{P_s^*x^*|_{R_{\alpha+1}[X]}\colon x^*\in X^*\}.\]
Observe that it follows from absoluteness that given $d\in D(\mathfrak{s})\cap M_\alpha$ there is $s\in \Gamma\cap M_{\alpha+1}$ such that $d\in P_s^*X^*$ and thus $d|_{R_{\alpha+1}[X]}\in D_{\alpha+1}$. This implies that \[(R_{\alpha+1}-R_\alpha)[X] = R_{\alpha+1}[X]\cap \ker R_\alpha = \bigcap_{d\in D(\mathfrak{s})\cap M_\alpha} \ker d|_{R_{\alpha+1}[X]}\]
is $w(R_{\alpha+1}[X], D_{\alpha+1})$-closed and then the existence of $\Gamma_\alpha$ as in \ref{it:subskeletonOnDifferences} follows from \cite[Lemma 2.1]{kal20}. The moreover part of \ref{it:subskeletonOnDifferences} follows from \ref{it:niceCanonicalPRI}, since $(R_{\alpha+1}-R_\alpha)[A]\subset A\cup\{0\}$.

Finally let us prove \ref{it:AeShrink}. Let $\varepsilon>0$, $B\subset A$ and $x^*\in X^*$ be such that $\mathfrak{s}$ is $(B,\varepsilon)$-shrinking for $x^*$, pick an increasing sequence $(\alpha_k)_{k\in\omega}$ in $[0,\kappa)$ and put $\alpha := \sup_k \alpha_k$. In order to get a contradiction assume that $\limsup_k \rho_{B}(R_{\alpha_k}^*(x^*),R_{\alpha}^*(x^*)) > \varepsilon\|x^*\|$ in which case, up to passing to a subsequence, we may assume that there is $\delta>0$ such that
 \[
 \rho_B(R_{\alpha_k}^*(x^*),R_{\alpha}^*(x^*))> \varepsilon \|x^*\| + \delta, \quad \text{for every }k\in\omega.
 \]
Fix any $k\in\omega$ for a while. 
 We have that
 \[
    \rho_B(R_{\alpha_k}^*(x^*),R_{\alpha}^*(x^*))=\sup_{x\in B}|x^*(R_{\alpha_k}(x))-x^*(R_{\alpha}(x))|,
 \]
 which implies that there exists $x_k\in B$ such that \[|x^*(R_{\alpha_k}(x_k))-x^*(R_{\alpha}(x_k))|\geq\varepsilon \|x^*\| +\delta.\] Putting this together with \ref{it:niceCanonicalPRI} we obtain that $|x^*(x_k)|\geq\varepsilon \|x^*\| +\delta$.
Note that it follows from \ref{it:comutative} that $R_{\alpha}(x_k)=x_k$ and $R_{\alpha_k}(x_k)=0$. Now, using that $P_sx\in\{0,x\}$, for every $s\in\Gamma$ and $x\in B$, we may recursively construct two increasing sequences $(s_j)_{j\in\omega}$ in $M_{\alpha}\cap\Gamma$ and $(k_j)_{j\in\omega}$ in $\mathbb{N}$ 
 having the following properties
 \begin{itemize}
     \item $s_{2j+1}\in M_{\alpha_{k_j}}$ and $P_{s_{2j+1}}x_{k_j}=0$, for every $j\in\omega$;
     \item $s_{2j+2}\in M_{\alpha_{k_{j+1}}}$ and $P_{s_i}x_{k_j}=x_{k_j}$, for every $j\in\omega$ and $i\geq 2j+2$.
 \end{itemize}
 Indeed, in the initial step of the induction we put $k_0 = 1$. If $k_0,\ldots,k_j$ are defined, using \eqref{eq:formulaUsingLimit} we find $s_{2j+1}\in M_{\alpha_{k_j}}$ such that $P_{s_{2j+1}}x_{k_j} = R_{\alpha_{k_j}} x_{k_j} = 0$ and we find $s_{2j+2}\in M_{\alpha}\cap\Gamma(=\bigcup_{k\in\omega}M_{\alpha_k}\cap\Gamma)$ such that $s_{2j+2}\geq s_{2j+1}$ and $P_{s_{2j+2}}x_{k_j}=R_\alpha x_{k_j} = x_{k_j}$ (and therefore $P_{s'}x_{k_j} = x_{k_j}$ for any $s'\geq s_{2j+2}$). Finally, we pick $k_{j+1}>k_j$ such that $s_{2j+2}\in M_{\alpha_{k_{j+1}}}$, which finishes the inductive step.

Putting $s:=\sup_j s_j$, we have that $P_{s_{2j+1}}x_{k_j}=0$ and  $P_sx_{k_j}=x_{k_j}$ for every $j\in\omega$. Therefore, for every $j\in\omega$, we have that
 \[
 \rho_B(P_{s_{2j+1}}^*(x^*),P_s^*(x^*))\geq|x^*(x_{k_j})|\geq \varepsilon\|x^*\| + \delta.
 \]
But this contradicts the fact that $\mathfrak{s}$ is $(B,\varepsilon)$-shrinking in $x^*$. Thus, \ref{it:AeShrink} holds.
\end{proof}

\section{Main results}\label{sec:mainResults}

This section contains the proofs of our main results. The main technical parts are concentrated in Subsection~\ref{subsec:shrinking} and the proofs of our characterizations are then presented in the remainder of the section.

\subsection{Shrinkingness passes from projectional skeletons to SPRI}\label{subsec:shrinking}

Frist we aim here at proving Theorem~\ref{thm:constructionSPRI}, which enables us to pass shrinking-like properties from projectional skeletons to SPRI.

\begin{lemma}\label{lem:spricase}
Let $X$ be a Banach space with a projectional skeleton $\mathfrak{s}$ and put $\kappa:=\dens(X)$. Let $A\subset X$ be a bounded set and let $\mathfrak{p} = (R_{\alpha})_{\alpha\leq \kappa}$ be a long sequence of bounded projections satisfying \ref{it:P0-Pkappa}, \ref{it:comutative}, \ref{it:upDirectedLimit},  \ref{it:niceCanonicalPRI} and \ref{it:AeShrink} from Proposition~\ref{prop:pri}.
Assume that for every $\alpha<\kappa$ the space $(R_{\alpha+1}-R_{\alpha})[X]$ is either separable or it admits a SPRI $\mathfrak{S}_\alpha=(Q_{\beta}^{\alpha})_{\beta\leq \mu_{\alpha}}$,  which satisfies:
\begin{enumerate}[resume]
   \item\label{it:partialNiceSPRI} $Q^{\alpha}_{\beta}x\in\{0,x\}$, for every  $x\in (R_{\alpha+1}-R_{\alpha})[A]$ and every $\beta\in[0,\mu_\alpha]$;
   \item\label{it:SPRIshrinkPartial} for every $(\varepsilon,B,x^*)\in\T(\mathfrak{s},A)$, we have that
   \[(\varepsilon_\alpha, B_\alpha, x^*_\alpha)\in\T(\mathfrak{S}_\alpha, A_\alpha),\]
   where $A_\alpha:=(R_{\alpha+1}-R_{\alpha})[A]$, $B_\alpha:=(R_{\alpha+1}-R_{\alpha})[B]$, $x^*_\alpha:=x^*|_{ (R_{\alpha+1}-R_{\alpha})[X]}$, $\varepsilon_\alpha: = \varepsilon\tfrac{\|x^*\|}{\|x^*_\alpha\|}$, if $x^*_\alpha \ne 0$ and $\varepsilon_\alpha:=0$, if $x^*_\alpha=0$.
\end{enumerate}
Then the whole space $X$ admits a SPRI $\mathfrak{S} = (Q_{\alpha})_{\alpha\leq \kappa}$ such that
\begin{enumerate}[resume]
   \item\label{it:SPRInicetotal} $Q_{\alpha}x\in\{0,x\}$, for every  $x\in A$ and $\alpha\in[0,\kappa]$;
   \item\label{it:SPRIshrink} $\T(\mathfrak{s}, A)\subset \T(\mathfrak{S}, A)$.
\end{enumerate}
\end{lemma}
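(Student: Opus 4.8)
The plan is to amalgamate the coarse resolution $\mathfrak{p}=(R_\alpha)_{\alpha\le\kappa}$ with the fine resolutions $\mathfrak{S}_\alpha$ living on the blocks $W_\alpha:=(R_{\alpha+1}-R_\alpha)[X]$ into a single long chain of projections, and then reindex it by $[0,\kappa]$. For each $\alpha<\kappa$ and each $\beta\le\mu_\alpha$ I would set
\[\tilde Q^\alpha_\beta:=R_\alpha+Q^\alpha_\beta\circ(R_{\alpha+1}-R_\alpha),\]
where, when $W_\alpha$ is separable, I take the trivial one-step resolution $Q^\alpha_0=0$, $Q^\alpha_1=\Id$. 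Using \ref{it:comutative} one checks that $R_\alpha$ annihilates $W_\alpha$ while $(R_{\alpha+1}-R_\alpha)$ acts as the identity on $W_\alpha$, whence each $\tilde Q^\alpha_\beta$ is a bounded projection with $\tilde Q^\alpha_0=R_\alpha$ and $\tilde Q^\alpha_{\mu_\alpha}=R_{\alpha+1}$; thus the $\alpha$-th fine chain runs from $R_\alpha$ up to $R_{\alpha+1}$.

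Next I would concatenate these chains. Writing $\nu_\alpha:=\sum_{\gamma<\alpha}\mu_\gamma$ and $\Lambda:=\sum_{\gamma<\kappa}\mu_\gamma$, every $\lambda<\Lambda$ decomposes uniquely as $\lambda=\nu_\alpha+\beta$ with $\beta<\mu_\alpha$, and I put $\hat Q_\lambda:=\tilde Q^\alpha_\beta$ and $\hat Q_\Lambda:=\Id$. The successor increment at $\nu_\alpha+\beta$ equals $(Q^\alpha_{\beta+1}-Q^\alpha_\beta)(R_{\alpha+1}-R_\alpha)$, whose range sits inside the separable space $(Q^\alpha_{\beta+1}-Q^\alpha_\beta)[W_\alpha]$, so all increments are separable. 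The commutativity relation $\hat Q_\lambda\hat Q_{\lambda'}=\hat Q_{\lambda'}\hat Q_\lambda=\hat Q_{\lambda'}$ for $\lambda'\le\lambda$ splits into a within-block case (inherited from $\mathfrak{S}_\alpha$) and a cross-block case, the latter being a direct computation from \ref{it:comutative} (one uses that $\hat Q_{\lambda'}$ has range inside $R_\alpha[X]$, on which $\hat Q_\lambda$ is the identity, and that $\hat Q_{\lambda'}$ kills $W_\alpha$). The spanning condition follows by combining the transfinite telescoping of $\mathfrak{p}$ — which via \ref{it:P0-Pkappa} and the continuity \ref{it:upDirectedLimit} already yields $x\in\closedSpan\{(R_{\alpha+1}-R_\alpha)x\colon\alpha<\kappa\}$ — with the spanning condition of each $\mathfrak{S}_\alpha$ applied to $(R_{\alpha+1}-R_\alpha)x\in W_\alpha$. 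Finally I reindex $(\hat Q_\lambda)_{\lambda\le\Lambda}$ by $[0,\kappa]$; this is where the cardinal bookkeeping enters, and one invokes the density bound $\dens(R_\alpha[X])<\kappa$ (property \ref{it:skeletonOnSubset}, available for the resolutions produced by Proposition~\ref{prop:pri}), which forces $\mu_\alpha=\dens W_\alpha<\kappa$, hence every partial sum $\nu_\alpha<\kappa$ and $\Lambda=\kappa$, so the concatenated chain is already a SPRI $\mathfrak{S}=(Q_\alpha)_{\alpha\le\kappa}$.

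Property \ref{it:SPRInicetotal} I would verify by a short case analysis: for $x\in A$, \ref{it:niceCanonicalPRI} gives $R_\alpha x,R_{\alpha+1}x\in\{0,x\}$; if $R_\alpha x=x$ then $(R_{\alpha+1}-R_\alpha)x=0$ and $\hat Q_\lambda x=x$, if $R_{\alpha+1}x=0$ then $\hat Q_\lambda x=0$, and in the remaining case $x=(R_{\alpha+1}-R_\alpha)x\in A_\alpha$, so $\hat Q_\lambda x=Q^\alpha_\beta x\in\{0,x\}$ by \ref{it:partialNiceSPRI}.

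The heart of the argument, and the step I expect to be the main obstacle, is the shrinkingness transfer \ref{it:SPRIshrink}. Fix $(\varepsilon,B,x^*)\in\T(\mathfrak{s},A)$; by \ref{it:AeShrink} the coarse chain $\mathfrak{p}$ is $(B,\varepsilon)$-shrinking in $x^*$, and by \ref{it:SPRIshrinkPartial} each $\mathfrak{S}_\alpha$ is $(B_\alpha,\varepsilon_\alpha)$-shrinking in $x^*_\alpha$, the rescaling being chosen precisely so that $\varepsilon_\alpha\|x^*_\alpha\|=\varepsilon\|x^*\|$. Given an increasing sequence $(\lambda_n)$ with supremum $\lambda$, I write $\lambda_n=\nu_{\alpha_n}+\beta_n$; monotonicity forces $(\alpha_n)$ to be non-decreasing, leaving exactly two cases. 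If $(\alpha_n)$ is eventually constant $=\alpha$, the sequence lives in a single block and $\rho_B(\hat Q^*_{\lambda_n}x^*,\hat Q^*_\lambda x^*)=\rho_{B_\alpha}\big((Q^\alpha_{\beta_n})^*x^*_\alpha,(Q^\alpha_\beta)^*x^*_\alpha\big)$ because $x^*$ agrees with $x^*_\alpha$ on $W_\alpha$, so the block hypothesis bounds the $\limsup$ by $\varepsilon_\alpha\|x^*_\alpha\|=\varepsilon\|x^*\|$ (the boundary sub-case $\beta_n\to\mu_\alpha$ is covered since the block SPRI is shrinking for that limit as well). If instead $\alpha_n\nearrow\alpha^*$ with $\alpha^*$ a limit, then $\lambda=\nu_{\alpha^*}$ and $\hat Q_\lambda=R_{\alpha^*}$; using \ref{it:SPRInicetotal} every $\hat Q_{\lambda_n}b$ lies in $\{0,b\}$, and since $\gamma\mapsto R_\gamma b$ jumps once from $0$ to $b$, comparing thresholds shows that the set of $b\in B$ on which $\hat Q_{\lambda_n}b$ and $R_{\alpha^*}b$ disagree is contained in the set where $R_{\alpha_n}b$ and $R_{\alpha^*}b$ disagree, whence
\[\rho_B(\hat Q^*_{\lambda_n}x^*,\hat Q^*_\lambda x^*)\le\rho_B(R^*_{\alpha_n}x^*,R^*_{\alpha^*}x^*),\]
and the $\limsup$ is controlled by the coarse shrinkingness of $\mathfrak{p}$. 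The delicate points are organizing this dichotomy cleanly and matching the two $\varepsilon$-scales, which is exactly what the rescaling built into \ref{it:SPRIshrinkPartial} together with the niceness properties \ref{it:niceCanonicalPRI} and \ref{it:partialNiceSPRI} are designed to make work.
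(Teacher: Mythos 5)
Your proposal is correct and follows essentially the same route as the paper: the same amalgamation $Q^\alpha_\beta(R_{\alpha+1}-R_\alpha)+R_\alpha$ reindexed over $[0,\kappa]$ (the paper uses lexicographically ordered pairs rather than ordinal sums, which is equivalent), the same case analysis for the $\{0,x\}$-property, and the same reduction of shrinkingness to the within-block hypothesis \eqref{it:SPRIshrinkPartial} and the coarse property \ref{it:AeShrink}. Your two-case dichotomy merges the paper's successor-boundary case into the within-block case, and your disagreement-set comparison at limit blocks is a slightly cleaner rendering of the paper's argument via \ref{it:upDirectedLimit} and \ref{it:niceCanonicalPRI}, but the substance is identical.
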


\begin{proof} We argue as in \cite[Proposition 6.2.7]{FabRedBook}. Let $\alpha<\kappa$. If $(R_{\alpha+1}-R_{\alpha})[X]$ is separable, then we put $\mu_{\alpha}=\omega$, $Q_{0}^{\alpha}\equiv 0$, and $Q_{\beta}^{\alpha}=R_{\alpha+1}-R_{\alpha}$, for all $0<\beta<\omega$. If $(R_{\alpha+1}-R_{\alpha})[X]$ is nonseparable, let $(Q_{\beta}^{\alpha})_{\beta\leq\mu_{\alpha}}$ be the SPRI given by the hypothesis. We start by defining $Q_{(\kappa,0)}:=\Id$,
 \[
 Q_{(\alpha,\beta)}:=Q_{\beta}^{\alpha}(R_{\alpha+1}-R_{\alpha})+ R_{\alpha}, \,\, 0\leq\beta<\mu_{\alpha}, \, 0\leq \alpha<\kappa,
 \]
 \[
 \Lambda:=\{(\alpha,\beta)\colon \beta<\mu_{\alpha}, \alpha<\kappa\}\cup\{(\kappa,0)\}.
 \]
We endow $\Lambda$ with the lexicographical order, i.e $(\alpha,\beta)\leq(\alpha',\beta')$ if and only if $\alpha<\alpha'$ or $\alpha=\alpha'$ and $\beta\leq\beta'$. Note that $\Lambda$ is order-isomorphic to $[0,\kappa]$ and that the elements of $\Lambda$ that correspond to limit ordinals are of following forms: $(\alpha,0)$, for any $\alpha>0$ and $(\alpha,\beta)$, for arbitrary $\alpha$ and limit $\beta$. With the same proof as the one presented in \cite[Proposition 6.2.7]{FabRedBook}, we conclude that $\mathfrak{S}:=(Q_{(\alpha,\beta)})_{(\alpha,\beta)\in \Lambda}$ is a SPRI on $X$ (up to reindexation using $\Lambda\cong[0,\kappa]$). Let us show that $\mathfrak{S}$ satisfies the additional properties \eqref{it:SPRInicetotal} and \eqref{it:SPRIshrink}.

 \medskip
 \noindent \eqref{it:SPRInicetotal}: Fix $x\in A$ and $\alpha \in$ $[0,\kappa)$. Note that by \ref{it:comutative} and \ref{it:niceCanonicalPRI} we have that $(R_{\alpha+1}-R_{\alpha})x\in \{0,x\}$. Thus, if $\mu_\alpha = \omega$, then \eqref{it:SPRInicetotal} follows. Otherwise, if $R_{\alpha}x=0$, then since $(R_{\alpha+1}-R_{\alpha})x\in (R_{\alpha+1}-R_{\alpha})[A]$, applying \eqref{it:partialNiceSPRI} we obtain that  $Q_{\beta}^{\alpha}(R_{\alpha+1}-R_{\alpha})(x)\in\{0,(R_{\alpha+1}-R_{\alpha})(x)\} = \{0,x\}$, therefore $Q_{(\alpha,\beta)}x\in\{0,x\}$. On the other hand if $R_{\alpha}x=x$, then $R_{\alpha+1}x=x$ as well and so $Q_{(\alpha,\beta)}x=R_{\alpha}x=x$. This proves \eqref{it:SPRInicetotal}.

 \medskip

\noindent\eqref{it:SPRIshrink}: Fix $(\varepsilon,B,x^*)\in\T(\mathfrak{s},A)$ and let $((\alpha_{k},\beta_{k}))_{k\in\omega}$ be an increasing sequence in $\Lambda$ such that $\sup_{k \in \omega}(\alpha_k,\beta_k)=(\alpha,\beta)$.  Only three cases are possible.

\medskip
\noindent\emph{Case 1: $\beta \neq 0$}.

\medskip
\noindent
 In this case $\alpha_k$ is eventually equal to $\alpha$ and $\sup_{k} \beta_k=\beta<\mu_{\alpha}$. If $\mu_\alpha=\omega$, then the result follows easily. Now assume that $\mu_\alpha>\omega$ and note that for a fixed $x \in B$, we have that

     \begin{equation*}
     \begin{split}       |Q_{(\alpha,\beta_k)}^*(x^*)(x) & - Q_{(\alpha,\beta)}^*(x^*)(x)|\\ &=|(Q^\alpha_{\beta_k})^*(x^*)((R_{\alpha+1}-R_{\alpha})(x))-(Q^\alpha_{\beta})^*(x^*)((R_{\alpha+1}-R_{\alpha})(x))|\\
         &\leq \rho_{B_\alpha}((Q^\alpha_{\beta_k})^*(x^*_\alpha),(Q^\alpha_{\beta})^*(x^*_\alpha)).
     \end{split}
     \end{equation*}
     Thus,
     \[
     \rho_{B}(Q_{(\alpha,\beta_k)}^*(x^*),Q_{(\alpha,\beta)}^*(x^*))
     \leq \rho_{B_\alpha}((Q^\alpha_{\beta_k})^*(x^*_\alpha),(Q^\alpha_{\beta})^*(x^*_\alpha))
     \]
     and by \eqref{it:SPRIshrinkPartial} we get
     \[
     \limsup_k\rho_{B}(Q_{(\alpha,\beta_k)}^*(x^*),Q_{(\alpha,\beta)}^*(x^*))\leq\varepsilon_\alpha \|x^*_\alpha\| \le \varepsilon\|x^*\|.
     \]
     Therefore we conclude that $(\varepsilon,B,x^*)\in \T(\mathfrak{S},A)$.

\medskip
\noindent\emph{Case 2: $\beta = 0$ and $\alpha$ is limit}.

\medskip
\noindent
In this case $\alpha = \sup_k \alpha_k$. Fix $x\in B$ and recall that $Q_{(\alpha,0)} = R_\alpha$. It follows from \ref{it:upDirectedLimit} and \ref{it:niceCanonicalPRI} that $(R_{\alpha_k+1}-R_{\alpha_k})(x)$ is eventually equal to zero for every $x\in B$. Therefore we get for every $x\in B$
     \begin{equation*}
         \begin{split}
         |Q_{(\alpha_k,\beta_k)}^*(x^*)(x) & - Q_{(\alpha,0)}^*(x^*)(x)|\\&=|(Q^{\alpha_k}_{\beta_k})^*(x^*)((R_{\alpha_k+1}-R_{\alpha_k})(x))+x^*(R_{\alpha_k}x)-x^*(R_{\alpha}x)|\\
         &= |x^*(R_{\alpha_k}x)-x^*(R_{\alpha}x)|\leq \rho_{B}(R_{\alpha_k}^*(x^*),R_{\alpha}^*(x^*)),
         \end{split}
     \end{equation*}
     for all $k \in \omega$ big enough. From this we get that
     \[
     \rho_{B}(Q_{(\alpha_k,\beta_k)}^*(x^*),Q_{(\alpha,\beta)}^*(x^*))
     \leq \rho_{B}(R_{\alpha_k}^*(x^*),R_\alpha^*(x^*)).
     \]
     Note that \ref{it:AeShrink} ensures that $(\varepsilon,B,x^*)\in\T(\mathfrak{p},A)$ and thus we obtain that
     \[
     \limsup_k\rho_{B}(Q_{(\alpha_k,\beta_k)}^*(x^*),Q_{(\alpha,\beta)}^*(x^*)) \le\varepsilon \|x^*\|.
     \]
     This shows that $(\varepsilon,B,x^*)\in \T(\mathfrak{S},A)$.

\medskip
\noindent\emph{Case 3: $\beta = 0$ and $\alpha = \gamma+1$, for some $\gamma<\kappa$}.

\medskip
\noindent
In this case $\alpha_k$ is eventually equal to $\gamma$ and $\sup_k\beta_k=\mu_{\gamma}$. So we may assume that $\alpha_k=\gamma$, for every $k\in\omega$. Fix $x\in B$ and recall that $Q_{(\gamma+1,0)} = R_{\gamma+1}$. If $\mu_\gamma=\omega$, then we observe that $Q_{(\gamma,\beta_k)} = R_{\gamma+1}$ whenever $\beta_k\neq 0$, so eventually we have $Q_{(\alpha_k,\beta_k)} = Q_{(\alpha,\beta)}$. Now assume that $\mu_\gamma>\omega$. Using that $Q_{\mu_{\gamma}}^{\gamma}$ is the identity map on $(R_{\gamma+1}-R_{\gamma})[X]$, we obtain
      \begin{equation*}
      \begin{split}
         |Q_{(\gamma,\beta_k)}^* & (x^*)(x) - Q_{(\gamma+1,0)}^*(x^*)(x)|\\ &=|(Q^\gamma_{\beta_k})^*(x^*)((R_{\gamma+1}-R_{\gamma})(x))-x^*((R_{\gamma+1}-R_{\gamma})(x)))|\\
         &=|(Q^\gamma_{\beta_k})^*(x^*)((R_{\gamma+1}-R_{\gamma})(x))-(Q^\gamma_{\mu_{\gamma}})^*(x^*)((R_{\gamma+1}-R_{\gamma})(x))|\\
         &\leq\rho_{B_\gamma}((Q^\gamma_{\beta_k})^*(x^*_\gamma),(Q^\gamma_{\mu_{\gamma}})^*(x^*_\gamma)).
     \end{split}
     \end{equation*}
     Therefore
     \[
     \rho_{B}(Q_{(\alpha_k,\beta_k)}^*(x^*),Q_{(\alpha,0)}^*(x^*))
     \leq \rho_{B_\gamma}((Q^\gamma_{\beta_k})^*(x^*_\gamma),(Q^\gamma_{\mu_{\gamma}})^*(x^*_\gamma))
     \]
     and using \eqref{it:SPRIshrinkPartial}, we obtain that
     $    \limsup_{k \in \omega}\rho_{B}(Q_{(\alpha_k,\beta_k)}^*(x^*),Q_{(\alpha,0)}^*(x^*))\leq \varepsilon \|x^*\|$.
     This shows that $(\varepsilon,B,x^*)\in \T(\mathfrak{S},A)$.
\end{proof}

\begin{thm}\label{thm:constructionSPRI}
Let $X$ be a nonseparable Banach space with a  projectional skeleton $\mathfrak{s} = (P_s)_{s\in\Gamma}$ and put $\kappa:=\dens(X)$. Let $A$ be a bounded subset of $X$ countably supporting $D(\mathfrak{s})$. Then $X$ admits a SPRI $\mathfrak{S} = (Q_\alpha)_{\alpha\leq\kappa}$ satisfying the following conditions
\begin{enumerate}[label=(\alph*)]
    \item\label{it:niceSPRI} For every  $x\in A$ and $\alpha\in[0,\kappa]$, $Q_{\alpha}x\in\{0,x\}$;
    \item\label{it:SPRIsubshrink} $\T(\mathfrak{s},A)\subset \T(\mathfrak{S},A)$.
\end{enumerate}
\end{thm}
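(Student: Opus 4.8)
The plan is to argue by transfinite induction on $\kappa=\dens(X)$, ranging over all nonseparable Banach spaces, since the three ingredients prepared in Subsection~\ref{subsec:usedElsewhere} dovetail precisely: Corollary~\ref{cor:plichko} normalizes the skeleton so that it acts trivially on $A$, Proposition~\ref{prop:pri} decomposes $X$ into difference spaces of strictly smaller density, and Lemma~\ref{lem:spricase} reassembles SPRIs on those pieces into a SPRI on $X$. The base case is $\kappa=\omega_1$, where every difference space has density $<\omega_1$, hence is separable, so no inductive hypothesis is needed; separable pieces are in any case handled directly inside Lemma~\ref{lem:spricase}.

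First I would apply Corollary~\ref{cor:plichko} to replace $\mathfrak{s}$ by a skeleton $\mathfrak{s}'$, isomorphic to a subskeleton of $\mathfrak{s}$, with $P'_s(x)\in\{0,x\}$ for every $x\in A$ and every index $s$. Since a subskeleton induces the same set, $D(\mathfrak{s}')=D(\mathfrak{s})$, so $A$ still countably supports $D(\mathfrak{s}')$; and because passing to (an isomorphic copy of) a $\sigma$-closed subset of the index poset only restricts the admissible increasing sequences while preserving their suprema, one checks at once that $\T(\mathfrak{s},A)\subset\T(\mathfrak{s}',A)$. This reduction puts us in position to apply Proposition~\ref{prop:pri} to $\mathfrak{s}'$ and $A$, producing a long sequence $\mathfrak{r}=(R_\alpha)_{\alpha\le\kappa}$ with properties \ref{it:P0-Pkappa}--\ref{it:AeShrink}.

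Next, for each $\alpha<\kappa$ I would examine the difference space $(R_{\alpha+1}-R_\alpha)[X]$, which is the range of a projection by \ref{it:comutative} and has density $<\kappa$ by \ref{it:skeletonOnSubset} (note that $\alpha+1<\kappa$ always, $\kappa$ being a limit ordinal). If it is separable there is nothing to do. Otherwise \ref{it:subskeletonOnDifferences} furnishes a projectional skeleton $\mathfrak{s}_\alpha$ on it whose induced set is countably supported by $A_\alpha:=(R_{\alpha+1}-R_\alpha)[A]$, so the inductive hypothesis applies and yields a SPRI $\mathfrak{S}_\alpha=(Q^\alpha_\beta)_{\beta\le\mu_\alpha}$ with $Q^\alpha_\beta y\in\{0,y\}$ for $y\in A_\alpha$ and $\T(\mathfrak{s}_\alpha,A_\alpha)\subset\T(\mathfrak{S}_\alpha,A_\alpha)$. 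The first of these is exactly hypothesis \ref{it:partialNiceSPRI} of Lemma~\ref{lem:spricase}, so it only remains to verify hypothesis \ref{it:SPRIshrinkPartial}.

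The heart of the argument, and the step I expect to be the main obstacle, is this verification of \ref{it:SPRIshrinkPartial}: given $(\varepsilon,B,x^*)\in\T(\mathfrak{s}',A)$, I must show $(\varepsilon_\alpha,B_\alpha,x^*_\alpha)\in\T(\mathfrak{S}_\alpha,A_\alpha)$, and by the inductive hypothesis it suffices to show $(\varepsilon_\alpha,B_\alpha,x^*_\alpha)\in\T(\mathfrak{s}_\alpha,A_\alpha)$. The key observations are that, by \ref{it:niceCanonicalPRI} and \ref{it:comutative}, $(R_{\alpha+1}-R_\alpha)x\in\{0,x\}$ for $x\in A$, so every nonzero element of $B_\alpha$ already lies in $B$; and that on the difference space the projections of $\mathfrak{s}_\alpha$ are the restrictions of the $P'_s$ while $x^*_\alpha$ is the restriction of $x^*$. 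Consequently, for any increasing sequence $(s_n)$ in the $\sigma$-closed index set $\Gamma_\alpha$ with supremum $s$ (which coincides with the supremum taken in the full poset), one obtains the pointwise domination $\rho_{B_\alpha}\bigl((P_{s_n}|)^*(x^*_\alpha),(P_s|)^*(x^*_\alpha)\bigr)\le\rho_B\bigl(P_{s_n}^*(x^*),P_s^*(x^*)\bigr)$. Taking $\limsup_n$ and using that $\mathfrak{s}'$ is $(B,\varepsilon)$-shrinking in $x^*$ bounds the left-hand side by $\varepsilon\|x^*\|=\varepsilon_\alpha\|x^*_\alpha\|$ (the last equality being the definition of $\varepsilon_\alpha$ when $x^*_\alpha\ne0$, the case $x^*_\alpha=0$ being trivial), which is precisely $(\varepsilon_\alpha,B_\alpha,x^*_\alpha)\in\T(\mathfrak{s}_\alpha,A_\alpha)$. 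With \ref{it:partialNiceSPRI} and \ref{it:SPRIshrinkPartial} in hand, Lemma~\ref{lem:spricase} delivers a SPRI $\mathfrak{S}$ on $X$ satisfying \ref{it:SPRInicetotal}, which is conclusion \ref{it:niceSPRI}, and \ref{it:SPRIshrink}, that is $\T(\mathfrak{s}',A)\subset\T(\mathfrak{S},A)$; combining this with the inclusion $\T(\mathfrak{s},A)\subset\T(\mathfrak{s}',A)$ from the first step gives conclusion \ref{it:SPRIsubshrink} and closes the induction.
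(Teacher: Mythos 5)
Your proposal is correct and follows essentially the same route as the paper: normalize the skeleton via Corollary~\ref{cor:plichko}, apply Proposition~\ref{prop:pri}, and close the induction with Lemma~\ref{lem:spricase}, verifying hypothesis \eqref{it:SPRIshrinkPartial} by the same $\rho_{B_\alpha}\le\rho_B$ domination. The only (harmless) deviations are that you fold the base case $\kappa=\omega_1$ into the Lemma~\ref{lem:spricase} machinery rather than taking $\mathfrak{S}:=\mathfrak{p}$ directly, and that you explicitly justify $\T(\mathfrak{s},A)\subset\T(\mathfrak{s}',A)$ for the subskeleton, a point the paper leaves implicit.
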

\begin{proof}
  According to Corollary~\ref{cor:plichko} we may and do assume that $P_sx\in \{0,x\}$, for every $x\in A$ and $s\in\Gamma$. We will prove the theorem by induction on $\kappa\geq \omega_1$. Let $\mathfrak{p} = (R_\alpha)_{\alpha\leq\kappa}$ be the long sequence of bounded projections satisfying \ref{it:P0-Pkappa}-\ref{it:AeShrink} given by Proposition~\ref{prop:pri}. If $\kappa=\omega_1$, then using \ref{it:P0-Pkappa}, \ref{it:comutative}, \ref{it:upDirectedLimit} and \ref{it:skeletonOnSubset}, we conclude that the family $\mathfrak{S} := \mathfrak{p}$ is a SPRI on $X$. Moreover, it follows from \ref{it:niceCanonicalPRI} and \ref{it:AeShrink} that $\mathfrak{S}$ satisfies conditions \ref{it:niceSPRI} and \ref{it:SPRIsubshrink}.

  Assume now that $\kappa>\omega_1$ and that the theorem holds for every nonseparable Banach space with density strictly smaller than $\kappa$. Let us verify that $\mathfrak{p}$ satisfies the assumptions of Lemma \ref{lem:spricase}. Fix $\alpha < \kappa$ and suppose that $(R_{\alpha+1}-R_{\alpha})[X]$ is not separable. By \ref{it:subskeletonOnDifferences}, there exists a $\sigma$-closed subset $\Gamma_\alpha$ of $\Gamma$ such that $\mathfrak{s}_\alpha:=(P_s|_{(R_{\alpha+1}-R_\alpha)[X]})_{s\in \Gamma_\alpha}$ is a projectional skeleton on $(R_{\alpha+1}-R_\alpha)[X]$ and $A_\alpha:=(R_{\alpha+1}-R_{\alpha})[A]$ countably supports $D(\mathfrak{s_\alpha})$. Therefore it follows from \ref{it:skeletonOnSubset} and the induction hypothesis that $(R_{\alpha+1}-R_{\alpha})[X]$ admits a SPRI $\mathfrak{S}_\alpha$ satisfying the appropriate versions of \ref{it:niceSPRI} and \ref{it:SPRIsubshrink}. It is clear that $\mathfrak{S}_\alpha$ satisfies conditions \eqref{it:partialNiceSPRI} from Lemma \ref{lem:spricase}. In order to check that $\mathfrak{S}_\alpha$ satisfies condition \eqref{it:SPRIshrinkPartial} from Lemma~\ref{lem:spricase}, fix $(\varepsilon,B,x^*)\in \T(\mathfrak{s},A)$, let $(s_n)_{n \in \omega}$ be an increasing sequence in $\Gamma_\alpha$ and put $s = \sup_n s_n$. Using that \ref{it:niceCanonicalPRI} ensures that $B_\alpha \subset B \cup \{0\}$, we obtain that
  \[\begin{split}
  \limsup_n \; &  \rho_{B_\alpha}\big((P_{s_n}|_{(R_{\alpha+1}-R_{\alpha})[X]})^*(x^*_\alpha), (P_{s}|_{(R_{\alpha+1}-R_{\alpha})[X]})^*(x^*_\alpha)\big)
 \\ & =\limsup_n \sup_{x\in B}\Big|P_{s_n}^*(x^*((R_{\alpha+1}-R_{\alpha})x))-P_{s}^*(x^*(R_{\alpha+1}-R_{\alpha}x))\Big|\\
  & \leq \limsup_n \rho_{B}(P_{s_n}^*(x^*),P_{s}^*(x^*))\leq \varepsilon\|x^*\|,
  \end{split}\]
  which implies that $(\varepsilon_\alpha,B_\alpha,x^*_\alpha)\in \T(\mathfrak{s}_\alpha, A_\alpha)$. Since the induction hypothesis ensures that  $\T(\mathfrak{s}_\alpha, A_\alpha) \subset \T(\mathfrak{S}_\alpha, A_\alpha)$, we conclude that $(\varepsilon_\alpha,B_\alpha,x^*_\alpha)\in \T(\mathfrak{S}_\alpha, A_\alpha)$. Thus $\mathfrak{S}_\alpha$ satisfies condition \eqref{it:SPRIshrinkPartial} from Lemma \ref{lem:spricase}. Now the result follows from this Lemma.
 \end{proof}

\subsection{Weakly $\mathcal{K}$-analytic Banach spaces}
In what follows, we denote by $\omega^{<\omega}$ the collection of functions defined on a natural number and taking values in $\omega$, i.e., $\omega^{<\omega}=\bigcup_{i\in \omega} \omega^i$.

Following \cite{FGMZ}, we say that a Banach space $X$ is \emph{weakly $\mathcal{K}$-analytic} if there exists a family $\{K_t\colon t\in \omega^{<\omega}\}$ of weak$^{*}$ compact subsets of $X^{**}$ such that $X=\bigcup_{\sigma\in\omega^{\omega}}\bigcap_{i\ge 1}K_{\sigma|_i}$. The key ingredients to establish the characterization of weakly $\mathcal{K}$-analytic Banach spaces given in Theorem \ref{thm:main} are Theorem \ref{thm:constructionSPRI} and the characterization of weakly $\mathcal{K}$-analytic spaces presented in \cite[Theorem 4]{FGMZ} that we recall in Lemma \ref{l:theorem4-FGMZ} (more precisely, using the well-known fact that any weakly $\mathcal{K}$-analytic Banach space is WLD, Lemma \ref{l:theorem4-FGMZ} can be easily deduced from \cite[Theorem 4]{FGMZ}, where a slightly different formulation is used).

\begin{lemma}\label{l:theorem4-FGMZ}
For a Banach space $X$, the following conditions are equivalent
\begin{enumerate}[label = (\roman*)]
    \item $X$ is a weakly $\mathcal{K}$-analytic space;
    \item \label{it:condition-ii} there exist a set $\A \subset B_X$ that is linearly dense in $X$ and countably supports $X^*$ and a family $\{A_t\colon t \in \omega^{<\omega}\}$ of subsets of $\A$ such that $A_\emptyset=\A=\bigcup_{\sigma \in \omega^\omega} \bigcap_{i \ge 1} A_{\sigma|_i}$ and for every $\varepsilon>0$, $x^* \in X^*$, $\sigma \in \omega^\omega$, there exists $i \in \omega$ such that
    \[\vert \{x \in A_{\sigma|_i}\colon \vert x^*(x) \vert >\varepsilon\}\vert< \omega.\]
\end{enumerate}
\end{lemma}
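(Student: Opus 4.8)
The plan is to read the equivalence off \cite[Theorem~4]{FGMZ}, which already contains the essential content---the equivalence of weak $\mathcal{K}$-analyticity with the combinatorial covering-and-finiteness scheme---and to spend the remaining effort only on the purely formal adjustments that turn that statement into \ref{it:condition-ii}. I would prove the two implications separately.

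For \ref{it:condition-ii}$\Rightarrow$(i): the data $\A$ and $\{A_t\colon t\in\omega^{<\omega}\}$ satisfy, verbatim, the hypotheses that \cite[Theorem~4]{FGMZ} requires of a linearly dense set carrying such a scheme, namely $\A=\bigcup_{\sigma\in\omega^\omega}\bigcap_{i\ge1}A_{\sigma|_i}$ together with the property that for all $\varepsilon>0$, $x^*\in X^*$ and $\sigma$ there is $i$ with $\{x\in A_{\sigma|_i}\colon|x^*(x)|>\varepsilon\}$ finite. Thus \cite[Theorem~4]{FGMZ} applies at once and gives that $X$ is weakly $\mathcal{K}$-analytic; here the additional clauses of \ref{it:condition-ii} (that $\A\subset B_X$, that $A_\emptyset=\A$, and that $\A$ countably supports $X^*$) play no role.

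For (i)$\Rightarrow$\ref{it:condition-ii}: \cite[Theorem~4]{FGMZ} furnishes a linearly dense subset of $B_X$ and a family indexed by $\omega^{<\omega}$ with the covering and finiteness properties. I would first make the cosmetic adjustments: put $A_\emptyset:=\A$ and replace each $A_t$ by $A_t\cap\A$, which forces $A_t\subset\A$ while leaving $\bigcup_{\sigma}\bigcap_{i\ge1}A_{\sigma|_i}=\A$ unchanged (each $\bigcap_{i\ge1}A_{\sigma|_i}$ is already contained in $\A$) and preserves finiteness since $A_t\cap\A\subset A_t$. The substantive clause is that $\A$ countably supports $X^*$, and this follows directly from the finiteness property: fixing $x^*$ and writing $\suppt_{\A}(x^*)=\bigcup_{n}\{x\in\A\colon|x^*(x)|>1/n\}$, it suffices to see each set on the right is countable. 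For each $\sigma$ choose $i(\sigma)$ witnessing finiteness for the given $\varepsilon$; if $x\in\A$ has $|x^*(x)|>\varepsilon$, then $x\in\bigcap_{i\ge1}A_{\sigma|_i}\subset A_{\sigma|_{i(\sigma)}}$ for some $\sigma$, so $x$ lies in one of the finite sets $\{y\in A_t\colon|x^*(y)|>\varepsilon\}$ with $t\in\{\sigma|_{i(\sigma)}\colon\sigma\in\omega^\omega\}\subset\omega^{<\omega}$. As this is a countable family of finite sets, $\{x\in\A\colon|x^*(x)|>\varepsilon\}$ is countable. (Equivalently, one invokes that weakly $\mathcal{K}$-analytic spaces are WLD, which already supplies a linearly dense set countably supporting $X^*$.)

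The only genuinely nontrivial point---and the reason the deduction is not completely immediate---is the reconciliation flagged by the words ``a slightly different formulation'' in \cite{FGMZ}: one must check that the scheme produced there can be taken to live on a single set $\A\subset B_X$ that is at once linearly dense, carries the family $\{A_t\}$, and (via the argument above, or via the WLD property) countably supports $X^*$. Once this matching of formulations is settled the proof is bookkeeping, and I anticipate no obstacle beyond it.
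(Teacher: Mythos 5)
Your proposal is correct and coincides with the paper's treatment: the paper gives no proof beyond the remark that the lemma ``can be easily deduced from \cite[Theorem 4]{FGMZ}'', and your two implications supply exactly the missing bookkeeping (the cosmetic replacement $A_t\mapsto A_t\cap\A$ and the verification of countable support). One small caution: your parenthetical alternative is imprecise, since WLD-ness only yields the existence of \emph{some} linearly dense set countably supporting $X^*$, not that the particular set $\A$ carrying the scheme does so; your primary argument---writing $\{x\in\A\colon|x^*(x)|>\varepsilon\}$ as a union over countably many $t\in\omega^{<\omega}$ of finite sets---is the correct route and should be the one retained.
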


\noindent
Here, given $n, i \in \omega$ and $t\in \omega^{i}$, we denote by $n^\smallfrown t$ the element of $\omega^{i+1}$ given by $n^\smallfrown t(0):=n$ and if $i\ge 1$, then $n^\smallfrown t(j):=t(j-1)$, for every $1\le j \le i$.

\begin{proof}[Proof of Theorem~\ref{thm:main}]
\ref{it:Xwanalytic}$\Rightarrow$ \ref{it:PSshrinkwanalytic} Let the set $\mathcal{A}$ and the family $\{A_t\subset \A: t\in\omega^{<\omega}\}$ be those found in (ii) of Lemma \ref{l:theorem4-FGMZ}.
Since $\A$ is linearly dense in $X$ and it countably supports $X^*$, we have that $X$ is Plichko. Hence $X$ admits a projectional skeleton $\mathfrak{s}=(P_s)_{s \in \Gamma}$ and thus it follows from Corollary \ref{cor:plichko} that we may assume without loss of generality that $P_s(x) \in \{0, x\}$, for every $x \in \A$ and $s \in \Gamma$.\\
Now fix any $\varepsilon>0$, $x^*\in X^* \setminus \{0\}$ and $\sigma\in\omega^{\omega}$. Let $i \in \omega$ be such that the set $Y:=\{x\in A_{\sigma|_i}\colon |x^*(x)|> \varepsilon\|x^*\|\}$ is finite. We have to show that $\mathfrak{s}$ is $(A_{\sigma|_i},\varepsilon)$-shrinking in $x^*$. Fix an increasing sequence $(s_k)_{k\in\omega}$ in $\Gamma$, put $s\coloneqq\sup_{k\in\omega}s_k$ and note that
\begin{equation*}
    |P_{s_k}^*(x^*)(x)-P_{s}^*(x^*)(x)|=|x^*(P_{s_k}x)-x^*(P_s x)|=\begin{cases}
    0\,\, &{\rm if} \,\,P_{s_k}x=P_sx, \\
    |x^*(x)|\,\, &{\rm if} \,\,P_{s_k}x\neq P_sx,
    \end{cases}
\end{equation*}
for every $k\in\omega$ and $x\in \A$. Thus we have that
\begin{equation}\label{eq:star}
\forall x \in A_{\sigma|_i} \setminus Y\colon\qquad |P_{s_k}^*(x^*)(x)-P_{s}^*(x^*)(x)| \le \varepsilon \Vert x^* \Vert.
\end{equation}
Moreover, since $(P_{s_k}^*(x^*))_{k\in\omega}$ converges to $P_s^*(x^*)$ in the weak$^*$-topology of $X^*$ and $Y$ is finite, there exists $k_0\in \omega$ such that
\begin{equation}\label{eq:starStar}
\forall k \ge k_0, \ \forall x \in Y:\quad |P_{s_k}^*(x^*)(x)-P_{s}^*(x^*)(x)| \le\varepsilon \Vert x^* \Vert.
\end{equation}
\noindent
Now, using \eqref{eq:star} and \eqref{eq:starStar}, we conclude that $\limsup_k \rho_{A_{\sigma|_i}}(P_{s_k}^*x^*,P_s^*x^*)\leq \varepsilon\|x^*\|$. \\
\ref{it:PSshrinkwanalytic} $\Rightarrow$ \ref{it:SPRIwanalytic} If $X$ is separable, then our implicaton is trivial and in the case when $X$ is nonseparable, our implication follows directly from Theorem \ref{thm:constructionSPRI}. \\
\ref{it:SPRIwanalytic} $\Rightarrow$ \ref{it:Xwanalytic} Let $(Q_{\alpha})_{\alpha\leq\kappa}$, $\A$ and $\{A_t\subset B_X\colon t\in\omega^{<\omega}\}$ be given by (iii). Note that we may assume that $Q_{\alpha}\neq Q_{\alpha+1}$, for every $\alpha<\kappa$.
For every $\alpha < \kappa$, set $T_\alpha:=Q_{\alpha+1}-Q_\alpha$ and note that since $T_\alpha[X]$ is separable and $T_\alpha[\A]$ is linearly dense in $T_\alpha[X]$, there exists a countable set $\{v_n^\alpha\colon n \in \omega\} \subset T_\alpha[\A] \setminus \{0\}$ that is linearly dense in $T_\alpha[X]$.

Consider the family $\{B_\sigma\colon \sigma \in \omega^{<\omega}\}$ defined as follows:
\begin{itemize}
    \item $B_{n^\smallfrown t}:=A_{t}\cap\{v_n^{\alpha}\colon\alpha < \kappa\} \subset B_X$, for every $n \in \omega$ and $t \in \omega^{<\omega}$;
    \item $B_{\emptyset}:=\bigcup_{\sigma\in\omega^{\omega}}\bigcap_{i\ge1} B_{\sigma|_i}$.
\end{itemize}
In order to conclude that $X$ is weakly $\mathcal{K}$-analytic, let us show that the subset $\B:=\bigcup_{\sigma\in\omega^{\omega}}\bigcap_{i\ge1} B_{\sigma|_i}$ of $B_X$ and the family $\{B_\sigma\colon \sigma \in \omega^{<\omega}\}$ satisfy condition \ref{it:condition-ii} of Lemma \ref{l:theorem4-FGMZ}. Using that $A_t \subset \A$, for every $t \in \omega^{<\omega}$ and that $A_\emptyset=\A$, it is easy to see that $B_{n^\smallfrown t} \subset \B$, for every $n \in \omega$ and $t \in \omega^{<\omega}$. In order to prove that $\B$ is linearly dense in $X$, we claim that $\{v_n^\alpha\colon \alpha < \kappa, n \in \omega\} \subset \B$. Indeed, fix any $\alpha < \kappa$ and any $n \in \omega$, it follows from \ref{it:nicePRIwAnalytic} that $v_n^\alpha \in \A$ and thus there exists $\sigma\in\omega^{\omega}$ such that $v_{n}^{\alpha}\in\bigcap_{i\ge1}A_{\sigma|_i}$, which implies that $v_{n}^{\alpha}\in\bigcap_{i\ge1}B_{\sigma'|_i}\subset\B$, where $\sigma' \in \omega ^{\omega}$ is defined as $\sigma'(0)=n$ and $\sigma'(i)=\sigma(i-1)$, for every $i\ge 1$. This proves the claim and ensures that $\B$ is linearly dense in $X$, since $\{v_n^\alpha\colon \alpha<\kappa, n \in \omega\}$ is linearly dense in $\bigcup_{\alpha < \kappa} T_\alpha[X]$ and $\bigcup_{\alpha<\kappa}T_\alpha[X]$ is linearly dense in $X$.
Now fix $\varepsilon>0$, $x^*\in X^*\setminus \{0\}$ and $\sigma\in \omega^{\omega}$. Using \ref{it: PRIAjshrinkwAnalytic}, find $i\in\omega$ corresponding to the triple $\varepsilon' :=\tfrac{\varepsilon}{2\|x^*\|}>0$, $x^*\in X^*$ and $\tau\in \omega^{\omega}$, where $\tau(j)=\sigma(j+1)$, for $j\in\omega$. We are going to verify that
\begin{equation}\label{eq:starstarstar}
\vert \{x\in B_{\sigma|_{i+1}}\colon |x^*(x)|>\varepsilon\}\vert<\omega.
\end{equation}

Assume by contradiction that this is not the case. Then using that
$B_{\sigma|_{i+1}}=B_{{\sigma(0)}^\smallfrown {\tau|_i}}=A_{\tau|_i}\cap\{v_{\sigma(0)}^{\alpha}\colon \alpha<\kappa\}$,
we conclude that there exists a strictly increasing sequence $(\alpha_k)_{k \in \omega}$ in $\kappa$ such that $v_{\sigma(0)}^{\alpha_k} \in A_{\tau|_i}$ and $\vert x^*(v_{\sigma(0)}^{\alpha_k}) \vert>\varepsilon$, for every $k \in \omega$. Set $\alpha=\sup_{k}\alpha_{k} \le \kappa$ and note that it follows from the fact that $(Q_\alpha)_{\alpha \le \kappa}$ is $(A_{\tau|_i}, \varepsilon')$-shrinking in $x^*$ that there exists $k \in \omega$ such that $\rho_{A_{\tau|_i}}(Q_{\alpha_k}^*(x^*), Q_\alpha^*(x^*))< 2\varepsilon'\|x^*\|(=\varepsilon)$. In particular, it holds that $\vert Q_{\alpha_k}^*(x^*)(v_{\sigma(0)}^{\alpha_k})-Q_{\alpha}^*(x^*)(v_{\sigma(0)}^{\alpha_k}) \vert <\varepsilon$.
Moreover using that $v_{\sigma(0)}^{\alpha_k} \in (Q_{\alpha_k+1}-Q_{\alpha_k})[X]=Q_{\alpha_k+1}[X] \cap \ker Q_{\alpha_k}$
and $Q_{\alpha_k+1}[X] \subset Q_\alpha[X]$, we conclude that $Q_{\alpha_k}(v_{\sigma(0)}^{\alpha_k})=0$ and $Q_{\alpha}(v_{\sigma(0)}^{\alpha_k})=v_{\sigma(0)}^{\alpha_k}$ and thus we obtain that $\vert x^*(v_{\sigma(0)}^{\alpha_k}) \vert <\varepsilon$, which contradicts the choice of $v_{\sigma(0)}^{\alpha_k}$. We proved  \eqref{eq:starstarstar} and thus verified condition \ref{it:condition-ii} of Lemma \ref{l:theorem4-FGMZ}. Now using this lemma we conclude that $X$ is weakly $\mathcal{K}$-analytic.
\end{proof}

\subsection{Va\v{s}\'ak spaces}

Following \cite{FGMZ}, we say that a Banach space $X$ is \emph{Va\v{s}\'ak} if there exists a family $\{K_n\colon n \in \omega\}$ of weak$^*$-compact subsets of $X^{**}$ such that for every $x\in X$ and $x^{**}\in X^{**}\setminus X$, there exists $n\in\omega$ with $x\in K_n$ and $x^{**}\notin K_n$. The key tools to establish the characterization of Va\v{s}\'ak spaces given in Theorem~\ref{thm:Vasak} are Theorem \ref{thm:constructionSPRI} and the following characterization of Va\v{s}\'ak spaces.

\begin{lemma}[{\cite[Theorem~3]{FGMZ}}]\label{l:theorem3-FGMZ}
For a Banach space $X$, the following conditions are equivalent:
\begin{enumerate}[label = (\roman*)]
    \item $X$ is a Va\v{s}\'ak space;
    \item \label{it:condition-ii} there exist a set $\A \subset B_X$ that is linearly dense in $X$ and a family $\{A_n\colon n \in \omega\}$ of subsets of $\A$ such that for every $\varepsilon>0$, $x^* \in X^*$ and $x \in \A$, there exists $n \in \omega$ such that
    \[x \in A_n \ \text{and} \ \vert \{x' \in A_n\colon \vert x^*(x') \vert >\varepsilon\}\vert< \omega.\]
\end{enumerate}
\end{lemma}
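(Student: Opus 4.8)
The plan is to prove the two implications separately, the reverse one (ii)$\Rightarrow$(i) being the transparent half. For (ii)$\Rightarrow$(i) I would set $K_n:=\overline{A_n}^{\,w^*}\subset X^{**}$; since $A_n\subset B_X\subset B_{X^{**}}$ and the latter is $w^*$-compact, each $K_n$ is $w^*$-compact. The crucial point is that the \emph{finiteness} hypothesis controls these closures. If $A_n$ is $(x^*,\varepsilon)$-small, put $F:=\{x'\in A_n\colon |x^*(x')|>\varepsilon\}$ (finite, hence $w^*$-closed, and contained in $X$) and $S:=\{z^{**}\in X^{**}\colon |z^{**}(x^*)|\le\varepsilon\}$ (which is $w^*$-closed). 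Then $A_n\setminus F\subset S$, so $K_n\subset S\cup F$. Now, given $x\in\A$ and $x^{**}\in X^{**}\setminus X$, choose $x^*\in X^*$ with $x^{**}(x^*)\neq 0$ and set $\varepsilon:=|x^{**}(x^*)|/2$; the hypothesis gives $n$ with $x\in A_n$ and $A_n$ $(x^*,\varepsilon)$-small. Since $|x^{**}(x^*)|=2\varepsilon>\varepsilon$ we have $x^{**}\notin S$, and since $x^{**}\notin X\supset F$ we have $x^{**}\notin F$, whence $x^{**}\notin K_n\ni x$. Thus $\{K_n\}$ separates $\A$ from $X^{**}\setminus X$.

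From here I would finish (ii)$\Rightarrow$(i) topologically. The separation just proved shows that $C(x):=\bigcap\{K_n\colon x\in A_n\}$ is a weakly compact subset of $X$ containing $x$, for every $x\in\A$, and that the countable family of finite intersections of the sets $K_n\cap X$ is a network for the cover $\{C(x)\colon x\in\A\}$ by weakly compact sets (any weakly open $U\supset C(x)$ contains a finite subintersection, by compactness of $C(x)$). Hence $\A$ sits inside a weakly Lindel\"of $\Sigma$ subset of $X$; as $\A$ is linearly dense and the weak Lindel\"of $\Sigma$ (countably $K$-determined) property is stable under passing to closed linear spans, $(X,w)$ is Lindel\"of $\Sigma$, which is the classical equivalent of $X$ being Va\v s\'ak. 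I would invoke this equivalence rather than try to separate all of $X$ directly, precisely because naive Minkowski sums of the $K_n$, which one would use to reach points outside $\A$, destroy the finiteness of the level sets.

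For (i)$\Rightarrow$(ii) I would again use that a Va\v s\'ak space is weakly Lindel\"of $\Sigma$, so $(X,w)$ is covered by the compact values of an usco map from a subset of $\omega^\omega$; being moreover WLD, $X$ carries a Markushevich basis $\{(e_\gamma,e^*_\gamma)\}$ with normalized, linearly dense $\{e_\gamma\}\subset B_X$, which I would take as $\A$. The sets $A_n$ would be obtained by distributing these vectors according to a countable base of the second-countable index space of the usco map, so that each $A_n$ lies in (a neighbourhood of) one weakly compact value. The required finiteness $|\{x'\in A_n\colon |x^*(x')|>\varepsilon\}|<\omega$ would then be read off from weak compactness of the usco values combined with the uniform minimality of the M-basis. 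This extraction is exactly where I expect the main obstacle: covering $X$ by countably many weakly compact sets is routine, but a general weakly compact set carries infinitely many points on which a fixed $x^*$ exceeds $\varepsilon$, so converting the purely topological countable determination of $(X,w)$ into the combinatorial finiteness condition forces one to choose $\A$ uniformly discrete (M-basic) and to diagonalize carefully over the countable determining family before defining the pieces $A_n$.
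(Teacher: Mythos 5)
The paper offers no proof of this lemma at all: it is imported verbatim from \cite[Theorem~3]{FGMZ}, so there is no internal argument to compare against and your attempt has to stand on its own. Your half (ii)$\Rightarrow$(i) does stand. The decomposition $K_n\subset S\cup F$ with $S$ a $w^*$-closed slab and $F$ finite (hence $w^*$-closed and contained in $X$) is exactly the right way to exploit the finiteness hypothesis; the sets $C(x)=\bigcap\{K_n\colon x\in A_n\}$ are then weakly compact subsets of $X$ covering $\A$, with the finite intersections of the $K_n$ as a countable network modulo this cover; and the reduction to the two classical facts you invoke --- that a Banach space containing a linearly dense weakly Lindel\"of $\Sigma$ subset is weakly countably determined, and that weak countable determinacy is equivalent to the $X^{**}$-separation definition of Va\v{s}\'ak used in this paper --- is legitimate. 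Your remark about why naive Minkowski sums of the $K_n$ cannot be used to reach all of $X$ directly is also on point.

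The forward direction, however, is a plan rather than a proof, and the decisive step is precisely the one you flag as ``the main obstacle'' and then do not carry out. Two things are missing. First, the sets $A_n$ are never defined: ``distributing the basis vectors so that each $A_n$ lies in (a neighbourhood of) one weakly compact value'' is not a construction, and a neighbourhood of a weakly compact set gives no finiteness at all. The workable choice is $A_t:=\A\cap K_t$ for $t\in\omega^{<\omega}$, where $X=\bigcup_{\sigma\in\Sigma}\bigcap_i K_{\sigma|i}$ is Talagrand's representation of a countably determined space, with $\Sigma\subset\omega^\omega$, the $K_t$ being $w^*$-compact in $X^{**}$ and nested along branches, and $\bigcap_i K_{\sigma|i}\subset X$ for $\sigma\in\Sigma$; every $x\in\A$ then lies in $A_{\sigma|i}$ for all $i$ along some branch. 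Second, the finiteness itself needs an argument you only allude to: (a) a sequence of distinct normalized M-basis vectors all of whose $w^*$-cluster points lie in $X$ is weakly null, because Eberlein--\v{S}mulyan plus the biorthogonality and totality of the coordinate functionals force every weak cluster point to be $0$; and (b) if for some $x\in\bigcap_i K_{\sigma|i}$, $x^*$ and $\varepsilon$ the set $\{x'\in A_{\sigma|i}\colon |x^*(x')|>\varepsilon\}$ were infinite for every $i$, one could choose distinct $y_i\in\A\cap K_{\sigma|i}$ with $|x^*(y_i)|>\varepsilon$; by the nesting, every $w^*$-cluster point of $(y_i)$ lies in $\bigcap_i K_{\sigma|i}\subset X$, so $(y_i)$ is relatively weakly compact and (a) gives $x^*(y_i)\to 0$, a contradiction. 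Without (a) and (b) --- and without a precise definition of the $A_n$ --- the implication (i)$\Rightarrow$(ii) is not established, so the proposal is incomplete even though the intended route is salvageable.
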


\begin{proof}[Proof of Theorem~\ref{thm:Vasak}]
\ref{it:Xwanalytic}$\Rightarrow$ \ref{it:PSshrinkwanalytic} Let $\A \subset B_X$ and $\{A_n \subset \A\colon n \in \omega\}$ be given by Lemma \ref{l:theorem3-FGMZ} and note that $\A$ countably supports $X^*$. Since $X$ is Plichko, we have that $X$ admits a projectional skeleton $\mathfrak{s}=(P_s)_{s \in \Gamma}$ and thus it follows from Corollary \ref{cor:plichko} that we may assume without loss of generality that $P_s(x) \in \{0, x\}$, for every $x \in \A$ and $s \in \Gamma$.

Clearly condition (a) is satisfied. Now let us show that condition (b) also holds. Fix $\varepsilon>0$, $x^*\in X^* \setminus \{0\}$ and for each $x \in \A$, pick $n(x) \in \omega$ such that
\[x \in A_{n(x)} \ \text{and} \ \vert \{x' \in A_{n(x)}\colon \vert x^*(x') \vert > \varepsilon \Vert x^* \Vert\} \vert<\omega.\]
Set $N:=\{n(x)\colon x \in \A\}$. It is clear that $\A=\bigcup_{j \in N} A_j$ and fixed $j \in N$ the fact that $\mathfrak{s}$ is $(A_j, \varepsilon)$-shrinking in $x^*$ is established with an argument identical to the one presented in the proof of \ref{it:Xwanalytic}$\Rightarrow$ \ref{it:PSshrinkwanalytic} in Theorem~\ref{thm:main}. \\
\ref{it:PSshrinkvasak} $\Rightarrow$ \ref{it:SPRIvasak} If $X$ is separable, then our implication is trivial and in the case when $X$ is nonseparable, our implication follows directly from Theorem \ref{thm:constructionSPRI}. \\
\ref{it:SPRIvasak} $\Rightarrow$ \ref{it:Xvasak} Let $(Q_{\alpha})_{\alpha\leq\kappa}$, $\A$ and $\{A_n\subset B_X\colon n \in \omega\}$ be given by (iii). Note that we may assume that $Q_{\alpha}\neq Q_{\alpha+1}$, for every $\alpha<\kappa$.
For every $\alpha < \kappa$, set $T_\alpha:=Q_{\alpha+1}-Q_\alpha$ and note that since $T_\alpha[X]$ is separable and $T_\alpha[\A]$ is linearly dense in $T_\alpha[X]$, there exists a countable set $\{v_n^\alpha\colon n \in \omega\} \subset T_\alpha[\A] \setminus \{0\}$ that is linearly dense in $T_\alpha[X]$. For each $n,m\in \omega$ define $B_{n,m}:=A_{m}\cap\{v_n^{\alpha}\colon\alpha < \kappa\}$ and set $\B:=\bigcup_{n,m\in\omega} B_{n,m}$.

In order to conclude that $X$ is a Va\v{s}\'{a}k space, let us show that the set $\B$ and the family $\{B_{n,m}\colon n,m \in \omega\}$ satisfy condition \ref{it:condition-ii} of Lemma \ref{l:theorem3-FGMZ}. Using that \ref{it:nicePRIvasak} ensures that $T_\alpha[\A] \setminus \{0\} \subset \A$, for every $\alpha < \kappa$, it is easy to see that $\{v_n^\alpha\colon n \in \omega, \alpha < \kappa\} \subset \B$, which implies that $\B$ is linearly dense in $X$. Now fix $\varepsilon>0$, $x^*\in X^* \setminus \{0\}$ and $x\in \B$. Let $N \subset \omega$ be the set given by \ref{it: PRIAjshrinkvasak} for $\varepsilon':=\frac{\varepsilon}{2 \Vert x^* \Vert}>0$ and $x^*$. Clearly, there exist $\alpha < \kappa$ and $n \in \omega$ such that $x=v_n^\alpha$. Moreover, since $\B \subset \A=\bigcup_{j \in N}A_j$, there exists $j \in N$ such that $x \in A_j$ and thus we have that $x \in B_{n, j}$. In order to conclude the proof, it remains to show that the set $\{x'\in B_{n, j}\colon |x^*(x')|>\varepsilon\}$ is finite. Assume by contradiction that this set is infinite. In this case, there exists a strictly increasing sequence $(\alpha_k)_{k \in \omega}$ in $\kappa$ such that $v_n^{\alpha_k} \in A_j$ and $\vert x^*(v_n^{\alpha_k}) \vert >\varepsilon$, for every $k \in \omega$. Since $(Q_\alpha)_{\alpha\le \kappa}$ is $(A_j, \varepsilon')$-shrinking in $x^*$, arguing as in the proof of \ref{it:SPRIwanalytic} $\Rightarrow$ \ref{it:Xwanalytic} in Theorem~\ref{thm:main}, we achieve a contradiction and establish our implication.
\end{proof}

\subsection{WCG spaces and their subspaces}

Recall that a Banach space is said to be \emph{weakly compactly generated} (WCG) if it contains a linearly dense and weakly compact subset. The key tools to establish the characterization of WCG spaces given in Theorem~\ref{thm:WCG} are Theorem \ref{thm:constructionSPRI} and the following characterization of WCG spaces.

\begin{lemma}[{\cite[Theorem~1]{FGMZ}}]\label{l:theorem1-FGMZ}
For a Banach space $X$, the following conditions are equivalent
\begin{enumerate}[label = (\roman*)]
    \item $X$ is a WCG space;
    \item \label{it:condition-ii-Thm1-Inner} there exists a set $\A \subset B_X$ that is linearly dense in $X$ and such that for every $\varepsilon>0$ and for every $x^* \in X^*$ it holds that
    \[\vert \{x \in \A\colon \vert x^*(x) \vert>\varepsilon\}\vert< \omega.\]
\end{enumerate}
\end{lemma}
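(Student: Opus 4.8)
The plan is to prove the two implications separately: the implication \ref{it:condition-ii-Thm1-Inner}$\Rightarrow$(i) is the softer, operator-theoretic one, while the converse will require some structure theory of WCG spaces. Throughout, the index set is $\A$ itself.

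For \ref{it:condition-ii-Thm1-Inner}$\Rightarrow$(i) I would encode $\A$ as the image of the unit vectors of an $\ell_1$-space. Define $S\colon \ell_1(\A)\to X$ by $S((\lambda_a)_{a\in\A}):=\sum_{a\in\A}\lambda_a a$; since $\A\subset B_X$, the series converges absolutely and $\|S\|\le 1$. Its adjoint is $S^*\colon X^*\to\ell_\infty(\A)$, $S^*x^*=(x^*(a))_{a\in\A}$, and the hypothesis \ref{it:condition-ii-Thm1-Inner} says exactly that $S^*x^*\in c_0(\A)$ for every $x^*$, i.e. $S^*$ maps $X^*$ into $c_0(\A)$. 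Being an adjoint, $S^*$ is $w^*$-to-$w^*$ continuous into $\ell_\infty(\A)=\ell_1(\A)^*$; since its range lies in $c_0(\A)$, on which the restriction of $\sigma(\ell_\infty(\A),\ell_1(\A))$ coincides with the weak topology of $c_0(\A)$, the map $S^*\colon (X^*,w^*)\to (c_0(\A),w)$ is continuous. Hence $S^*(B_{X^*})$ is weakly compact, so $S^*$ — and therefore, by Gantmacher's theorem, $S$ — is weakly compact. Then $K:=\overline{S(B_{\ell_1(\A)})}^{\,w}$ is weakly compact, and as $a=S(e_a)\in K$ for each $a\in\A$ it contains the linearly dense set $\A$; thus $X$ is WCG.

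For (i)$\Rightarrow$\ref{it:condition-ii-Thm1-Inner} I would start from a linearly dense weakly compact set and pass to a Markushevich basis. Invoking the classical Amir--Lindenstrauss theory, a WCG space admits an M-basis $\{(e_\gamma,f_\gamma)\colon\gamma\in\Gamma\}$ such that $\{e_\gamma\colon\gamma\in\Gamma\}\cup\{0\}$ is weakly compact (alternatively, one first factors the generating weakly compact set through a reflexive space via the Davis--Figiel--Johnson--Pe\l czy\'nski construction and builds the M-basis there, transferring it back through the factoring operator). Put $\A:=\{e_\gamma/\|e_\gamma\|\colon\gamma\in\Gamma\}\subset B_X$, which is linearly dense. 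To verify the finiteness condition, fix $x^*\in X^*$ and $\varepsilon>0$ and suppose, for contradiction, that infinitely many $\gamma$ satisfy $|x^*(e_\gamma/\|e_\gamma\|)|>\varepsilon$. By Eberlein--\v{S}mulian applied to the weakly compact set $\{e_\gamma\}\cup\{0\}$, some sequence of distinct such vectors converges weakly to a point $z$; evaluating the biorthogonal functionals gives $f_\delta(e_{\gamma_n}/\|e_{\gamma_n}\|)\to 0$ for each fixed $\delta$, so $f_\delta(z)=0$ for all $\delta$, and totality of $\{f_\delta\}$ forces $z=0$. But then $|x^*(z)|\ge\varepsilon>0$, a contradiction, so the set is finite.

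The main obstacle is the direction (i)$\Rightarrow$\ref{it:condition-ii-Thm1-Inner}: the soft combinatorial core — that a weak limit of distinct normalized basis vectors is annihilated by every biorthogonal functional, hence is $0$ — is elementary, but it rests on the nontrivial structural input that every WCG space carries a \emph{weakly compact} Markushevich basis (equivalently, that a weakly compact generating set factors through a reflexive space). I would treat that input as the black box to cite; everything else is a routine verification.
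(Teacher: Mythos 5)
Your proof of \ref{it:condition-ii-Thm1-Inner}$\Rightarrow$(i) is correct: the factorization through $\ell_1(\A)$, the observation that the hypothesis says precisely that $S^*$ maps $X^*$ into $c_0(\A)$, the identification of the restricted $w^*$-topology of $\ell_\infty(\A)$ with the weak topology of $c_0(\A)$, and Gantmacher's theorem together produce a weakly compact linearly dense set. (The paper itself offers no proof of this lemma --- it is quoted from \cite[Theorem~1]{FGMZ} --- so there is no internal argument to compare against; your first half is the standard one.)

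The direction (i)$\Rightarrow$\ref{it:condition-ii-Thm1-Inner} has a genuine gap, located exactly at the normalization $e_\gamma\mapsto e_\gamma/\Vert e_\gamma\Vert$. Eberlein--\v{S}mulian applies to the weakly compact set $\{e_\gamma\}\cup\{0\}$, whose elements are the $e_\gamma$ themselves, not the normalized vectors; the set $\{e_\gamma/\Vert e_\gamma\Vert\colon\gamma\in\Gamma\}$ need not be relatively weakly compact, and the finiteness condition can genuinely fail for it. Concretely, in $X=\ell_1$ take the Markushevich basis $e_n:=2^{-n}u_n$, $f_n:=2^n u_n^*$, where $(u_n)$ is the unit vector basis: then $\{e_n\}\cup\{0\}$ is norm compact, hence weakly compact, but $e_n/\Vert e_n\Vert=u_n$, and the functional $x^*=(1,1,1,\dots)\in\ell_\infty=\ell_1^*$ satisfies $\vert x^*(u_n)\vert=1$ for every $n$, so $\{x\in\A\colon\vert x^*(x)\vert>1/2\}$ is infinite for your choice of $\A$. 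What your weak-null argument actually proves is the statement for the \emph{un-normalized} family: if infinitely many distinct $\gamma_n$ satisfied $\vert x^*(e_{\gamma_n})\vert>\varepsilon$, a weakly convergent subsequence of $(e_{\gamma_n})$ would have a limit $z$ annihilated by every $f_\delta$, hence $z=0$, contradicting $\vert x^*(z)\vert\ge\varepsilon$; note that this reasoning gives no contradiction with $\vert x^*(e_{\gamma_n})\vert>\varepsilon\Vert e_{\gamma_n}\Vert$ when $\Vert e_{\gamma_n}\Vert\to 0$. The repair is immediate: since $\{e_\gamma\}\cup\{0\}$ is weakly compact it is bounded, say by $M$; take $\A:=\{e_\gamma/M\colon\gamma\in\Gamma\}\subset B_X$ (a single uniform rescaling rather than a pointwise one), and the finiteness of $\{\gamma\colon\vert x^*(e_\gamma)\vert>M\varepsilon\}$ gives condition \ref{it:condition-ii-Thm1-Inner}. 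With that one change the argument is complete.
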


Note that the equivalence between \ref{it:XWCG} and \ref{it:PSshrinkWCG} in Theorem \ref{thm:WCG} can be deduced also from the methods of proof from \cite[Theorem 21]{FM18}; however, our argument is shorter (using the results proved above) and condition \ref{it:SPRIWCG} is new.

\begin{thm}\label{thm:WCG}
Let $X$ be a Banach space and put $\kappa:=\dens(X)$. Then the following conditions are equivalent.
\begin{enumerate}[label = (\roman*)]
    \item\label{it:XWCG} $X$ is a WCG space.
    \item\label{it:PSshrinkWCG} There exist a projectional skeleton $\mathfrak{s}=(P_s)_{s\in\Gamma}$ on $X$ and a non-empty set $\A\subset B_X$, satisfying the following conditions:
\begin{enumerate}
    \item\label{it:lindenscountsupportWCG}  $\A$ is linearly dense in $X$ and countably supports all of $X^*$;
    \item $\mathfrak{s}$ is $(\A,0)$-shrinking in every element of $X^*$.
\end{enumerate}
\item\label{it:SPRIWCG} There exist a SPRI $(Q_{\alpha})_{\alpha\leq \kappa}$ in $X$ and a non-empty set $\A\subset B_X$, satisfying the following conditions:
\begin{enumerate}[label=(3\alph*)]
\item $\A $ is linearly dense in $X$ and countably supports $X^*$;
\item\label{it:nicePRIWCG}  for every $x\in\A$ it holds that $\{Q_\alpha(x)\colon \alpha\leq \kappa\}\subset \{0,x\}$;
\item\label{it: PRIAjshrinkWCG} $(Q_\alpha)_{\alpha\leq \kappa}$ is $(\A,0)$-shrinking in every element of $X^*$.
\end{enumerate}
\end{enumerate}
\end{thm}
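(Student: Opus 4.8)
The plan is to establish the cycle \ref{it:XWCG}$\Rightarrow$\ref{it:PSshrinkWCG}$\Rightarrow$\ref{it:SPRIWCG}$\Rightarrow$\ref{it:XWCG}, reusing the machinery behind Theorems~\ref{thm:main} and \ref{thm:Vasak} but now feeding it the single-set characterization of WCG spaces from Lemma~\ref{l:theorem1-FGMZ}. For \ref{it:XWCG}$\Rightarrow$\ref{it:PSshrinkWCG}, I would take the linearly dense $\A\subset B_X$ from Lemma~\ref{l:theorem1-FGMZ}; since $\{x\in\A\colon x^*(x)\neq0\}=\bigcup_{k}\{x\in\A\colon|x^*(x)|>1/k\}$ is a countable union of finite sets, $\A$ countably supports $X^*$, whence $X$ is Plichko and carries a projectional skeleton $\mathfrak{s}$. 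By Corollary~\ref{cor:plichko} I may assume $P_s(x)\in\{0,x\}$ for all $x\in\A$ and $s\in\Gamma$, so that for $x\in\A$ the quantity $|P_{s_k}^*(x^*)(x)-P_s^*(x^*)(x)|$ equals $0$ or $|x^*(x)|$. Given $\varepsilon>0$ the set $Y=\{x\in\A\colon|x^*(x)|>\varepsilon\|x^*\|\}$ is finite, so this quantity is $\le\varepsilon\|x^*\|$ off $Y$ and, by the weak$^*$ convergence $P_{s_k}^*(x^*)\to P_s^*(x^*)$, eventually $\le\varepsilon\|x^*\|$ on the finite set $Y$; letting $\varepsilon\downarrow0$ yields $(\A,0)$-shrinkingness, exactly as in Theorem~\ref{thm:main}.

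The implication \ref{it:PSshrinkWCG}$\Rightarrow$\ref{it:SPRIWCG} is then immediate: the separable case is trivial, and when $X$ is nonseparable $\A$ countably supports $D(\mathfrak{s})\subset X^*$, so Theorem~\ref{thm:constructionSPRI} produces a SPRI $(Q_\alpha)_{\alpha\le\kappa}$ in which \ref{it:niceSPRI} gives \ref{it:nicePRIWCG}, while \ref{it:SPRIsubshrink} transports the fact that $(0,\A,x^*)\in\T(\mathfrak{s},\A)$ for every $x^*$ into \ref{it: PRIAjshrinkWCG}.

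The substantive step, and the one I expect to be the main obstacle, is \ref{it:SPRIWCG}$\Rightarrow$\ref{it:XWCG}. Writing $T_\alpha:=Q_{\alpha+1}-Q_\alpha$, each $T_\alpha[X]$ is separable, and the SPRI commutation relations give $Q_\beta x=0$ for $\beta\le\alpha$ and $Q_\beta x=x$ for $\beta\ge\alpha+1$ whenever $x\in T_\alpha[X]$; together with \ref{it:nicePRIWCG} this shows $T_\alpha[\A]\setminus\{0\}\subset\A\cap T_\alpha[X]$ is linearly dense in $T_\alpha[X]$ and contained in $\A$. The naive candidate $\bigcup_\alpha(\A\cap T_\alpha[X])$ fails the finiteness clause of Lemma~\ref{l:theorem1-FGMZ}, because $(\A,0)$-shrinkingness controls only the cross-layer behaviour of $x^*$ and says nothing about how many vectors of one fixed separable layer can have large $x^*$-value. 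The remedy is to damp the intra-layer multiplicity by scaling: for each $\alpha$ fix a countable linearly dense family $\{u^\alpha_n\colon n\in\omega\}\subset\A\cap T_\alpha[X]$, set $v^\alpha_n:=\tfrac{1}{n+1}u^\alpha_n\in B_X$, and put $\B:=\{v^\alpha_n\colon\alpha<\kappa,\ n\in\omega\}$. Linear density of $\B$ follows from $\closedSpan\bigcup_{\alpha<\kappa}T_\alpha[X]=X$, exactly as in Theorems~\ref{thm:main} and \ref{thm:Vasak}.

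It remains to check that $\{v\in\B\colon|x^*(v)|>\varepsilon\}$ is finite for each $\varepsilon>0$ and $x^*\in X^*\setminus\{0\}$. Since $|x^*(v^\alpha_n)|>\varepsilon$ forces $(n+1)\varepsilon<|x^*(u^\alpha_n)|\le\|x^*\|$, only finitely many $n$ can occur; and for each fixed such $n$, if $\{\alpha\colon|x^*(u^\alpha_n)|>(n+1)\varepsilon\}$ were infinite I would select from it a strictly increasing sequence $(\alpha_k)$ and set $\alpha=\sup_k\alpha_k$, obtaining $u^{\alpha_k}_n\in\A$ with $Q_{\alpha_k}u^{\alpha_k}_n=0$ and $Q_\alpha u^{\alpha_k}_n=u^{\alpha_k}_n$, so that $\rho_\A(Q_{\alpha_k}^*x^*,Q_\alpha^*x^*)\ge|x^*(u^{\alpha_k}_n)|>(n+1)\varepsilon$, contradicting \ref{it: PRIAjshrinkWCG}. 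Hence the set is finite and $X$ is WCG by Lemma~\ref{l:theorem1-FGMZ}. The decisive point is this division of labour: the factor $1/(n+1)$ annihilates the uncontrolled intra-layer count, while the unscaled vectors $u^\alpha_n$ remain inside $\A$ and so stay available to the shrinkingness estimate that governs the cross-layer count.
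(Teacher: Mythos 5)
Your proposal is correct and follows essentially the same route as the paper: Lemma~\ref{l:theorem1-FGMZ} plus Corollary~\ref{cor:plichko} for \ref{it:XWCG}$\Rightarrow$\ref{it:PSshrinkWCG}, Theorem~\ref{thm:constructionSPRI} for \ref{it:PSshrinkWCG}$\Rightarrow$\ref{it:SPRIWCG}, and for \ref{it:SPRIWCG}$\Rightarrow$\ref{it:XWCG} the same construction of countable linearly dense subsets of $\A$ in each layer $(Q_{\alpha+1}-Q_\alpha)[X]$, scaled by $1/(n+1)$ to control the intra-layer count, with shrinkingness ruling out infinitely many layers. The only difference is cosmetic bookkeeping in the final finiteness argument (you first bound the admissible indices $n$, the paper splits into finitely versus infinitely many layers), which does not change the substance.
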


\begin{proof}
\ref{it:XWCG}$\Rightarrow$\ref{it:PSshrinkWCG} Let $\A$ be the set given by Lemma \ref{l:theorem1-FGMZ} and note that $\A$ countably supports $X^*$. Since $X$ is (trivially) Plichko, we have that $X$ admits a projectional skeleton $\mathfrak{s}=(P_s)_{s \in \Gamma}$ and thus it follows from Corollary \ref{cor:plichko} that we may assume without loss of generality that $P_s(x) \in \{0, x\}$, for every $x \in \A$ and $s \in \Gamma$. Finally, given any $x^* \in X^*$, the fact that $\mathfrak{s}$ is $(\A,0)$-shrinking in $x^*$ can be easily checked using that for every $\varepsilon>0$, the set $\{x \in \A\colon \vert x^*(x) \vert>\varepsilon\}$ is finite, similarly as argued in the proof of \ref{it:Xwanalytic}$\Rightarrow$ \ref{it:PSshrinkwanalytic} in Theorem~\ref{thm:main}. \\
\ref{it:PSshrinkWCG}$\Rightarrow$ \ref{it:SPRIWCG} If $X$ is separable, then \ref{it:SPRIWCG} trivially holds and in the case when $X$ is nonseparable, the implication follows directly from Theorem \ref{thm:constructionSPRI}. \\
\ref{it:SPRIWCG}$\Rightarrow$\ref{it:XWCG} Let $(Q_{\alpha})_{\alpha\leq\kappa}$ and $\A$ be given by (iii). Note that we may assume that $Q_{\alpha}\neq Q_{\alpha+1}$, for every $\alpha<\kappa$. For every $\alpha < \kappa$, set $T_\alpha:=Q_{\alpha+1}-Q_\alpha$. Since $T_\alpha[X]$ is separable and $T_\alpha[\A]$ is linearly dense in $T_\alpha[X]$, there exists a countable set $\{v_n^\alpha\colon n \ge 1\} \subset T_\alpha[\A] \setminus \{0\}$ that is linearly dense in $T_\alpha[X]$. In order to conclude that $X$ is WCG, we will show that the set $\B:=\bigcup_{\alpha<\kappa}\{\tfrac{v_n^{\alpha}}{n}\colon n\ge 1\}$ satisfies condition \ref{it:condition-ii-Thm1-Inner} of Lemma \ref{l:theorem1-FGMZ}. Note that $\B \subset B_X$, since it follows from condition \ref{it:nicePRIWCG} that $T_\alpha[\A] \setminus \{0\} \subset \A$. Moreover the linear density of $\B$ in $X$ follows from the fact that $\B$ is linearly dense in $\bigcup_{\alpha < \kappa} T_\alpha[X]$ and $\bigcup_{\alpha < \kappa} T_\alpha[X]$ is linearly dense in $X$. Now fix any $\varepsilon>0$, and any $x^*\in X^*$. We will show that the set $\{x\in \B\colon |x^*(x)|>\varepsilon \}$ is finite. Assume by contradiction that this is not the case. If there are only finitely many $\alpha < \kappa$ such that $\{x\in \B\colon |x^*(x)|>\varepsilon \} \cap T_\alpha[X] \ne \emptyset$, we easily arrive to a contradiction, since $\lim_{n \to \infty} \tfrac{v_n^\alpha}{n}=0$, for every $\alpha < \kappa$.
Otherwise, there exist a strictly increasing sequence $(\alpha_k)_{k \in \omega}$ in $\kappa$ and a sequence of nonzero natural numbers $(n_k)_{k \in \omega}$ such that
\[\forall k \in \omega\colon\quad \Big\vert x^*\big(\tfrac{v_{n_k}^{\alpha_k}}{n_k}\big) \Big\vert> \varepsilon.\]
Since $(Q_\alpha)_{\alpha\le \kappa}$ is $(\A,0)$-shrinking in $x^*$, arguing as in the proof of \ref{it:SPRIwanalytic} $\Rightarrow$ \ref{it:Xwanalytic} in Theorem~\ref{thm:main}, we obtain that there exists $k \in \omega$ such that $\vert x^*(v_{n_k}^{\alpha_k}) \vert < \varepsilon$, since $v_{n_k}^{\alpha_k} \in \A$. But this contradicts the choice of $v_{n_k}^{\alpha_k}$ and thus establishes the implication.
\end{proof}

It is worth mentioning that a subspace of a WCG space is not necessarily WCG (see \cite{Ros74}). The key tools to establish the characterization of subspaces of WCG spaces given in Theorem~\ref{thm:SWCG} are Theorem \ref{thm:constructionSPRI} and the following characterization of subspaces of WCG spaces.

\begin{lemma}[{\cite[Theorem~2]{FGMZ}}]\label{l:theorem2-FGMZ}
For a Banach space $X$, the following conditions are equivalent
\begin{enumerate}[label = (\roman*)]
    \item $X$ is a subspace of a WCG space;
    \item \label{it:condition-ii-Thm2-Inner} there exists a set $\A \subset B_X$ that is linearly dense in $X$ and such that for every $\varepsilon>0$ there exists a decomposition $\A=\bigcup_{n \in \omega}A_n^\varepsilon$ satisfying the following condition
    \[\forall n \in \omega, \ \forall x^* \in X^* \colon \; \vert \{x \in A_n^\varepsilon\colon \vert x^*(x) \vert>\varepsilon\}\vert< \omega.\]
\end{enumerate}
\end{lemma}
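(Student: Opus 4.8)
The plan is to route both implications through the classical fact that $X$ is a subspace of a WCG space if and only if its dual ball $(B_{X^*},w^*)$ is an \emph{Eberlein compact}, i.e.\ it admits a homeomorphic embedding into $c_0(\Gamma)$ with the topology of pointwise convergence, for some index set $\Gamma$. Condition \ref{it:condition-ii-Thm2-Inner} should be read as furnishing exactly the coordinate functions of such an embedding: each $x\in\A$ yields the $w^*$-continuous evaluation $e_x\colon x^*\mapsto x^*(x)$ on $(B_{X^*},w^*)$, the linear density of $\A$ guarantees that these evaluations separate the points of $B_{X^*}$, and the finiteness clauses are designed to force the coordinate vector $(e_x(x^*))_x$ to lie in (a weighted copy of) $c_0$.

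For \ref{it:condition-ii-Thm2-Inner}$\Rightarrow$(i) I would first fix, for each $m\ge 1$, the decomposition $\A=\bigcup_n A^{1/m}_n$ obtained from $\varepsilon=1/m$. For a fixed block the map $\Theta_{m,n}\colon X^*\to\ell_\infty(A^{1/m}_n)$, $x^*\mapsto(x^*(x))_{x\in A^{1/m}_n}$, satisfies $\Theta_{m,n}(B_{X^*})\subset c_0(A^{1/m}_n)+\tfrac1m B_{\ell_\infty}$, since for each $x^*$ only finitely many coordinates exceed $1/m$. Assembling these diagonally into $T\colon X^*\to Y:=\big(\sum_{(m,n)}\ell_\infty(A^{1/m}_n)\big)_{c_0}$, with weights $w_{m,n}\to 0$ folded into the coordinates, gives a bounded, injective (again by the linear density of $\A$), $w^*$-to-weak continuous operator; the inclusions $\Theta_{m,n}(B_{X^*})\subset c_0+\tfrac1m B_{\ell_\infty}$ are then used to show that $T(B_{X^*})$ is relatively weakly compact in $Y$, as for every $\delta>0$ the image is approximated to within $\delta$ by a set lying in a $c_0$-sum (a Grothendieck-type criterion). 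Hence $(B_{X^*},w^*)$ is Eberlein, and an application of the Davis–Figiel–Johnson–Pełczyński factorization (equivalently, the Amir–Lindenstrauss construction, using that $X$ embeds isometrically into the WCG space $C(B_{X^*},w^*)$) produces a WCG space into which $X$ embeds isomorphically.

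For (i)$\Rightarrow$\ref{it:condition-ii-Thm2-Inner}, suppose $X$ sits isomorphically in a WCG space $W$. The restriction operator $W^*\to X^*$ is $w^*$-to-$w^*$ continuous and maps $B_{W^*}$ onto a multiple of $B_{X^*}$, so $(B_{X^*},w^*)$ is a continuous image of the Eberlein compact $(B_{W^*},w^*)$ and is therefore itself Eberlein (Benyamini–Rudin–Wage). Embedding $(B_{X^*},w^*)$ into $c_0(\Gamma)$ pointwise and arranging, as in the Amir–Lindenstrauss scheme, that the coordinate functionals are realised by evaluations at suitably normalised points $x_\gamma\in B_X$, I would set $\A:=\{x_\gamma\colon\gamma\in\Gamma\}$, whose linear density follows from the separation property of the coordinates. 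For a given $\varepsilon>0$ the decomposition $\A=\bigcup_n A^{\varepsilon}_n$ is then read off by partitioning $\Gamma$ according to the position of $\gamma$ inside a fixed enumeration of the $\varepsilon$-level sets $\{\gamma:|h(\gamma)|>\varepsilon\}$ of the embedding $h$, the pointwise-$c_0$ property ensuring that each block meets $\{x:|x^*(x)|>\varepsilon\}$ in a finite set for every $x^*$.

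The crux, and the step I expect to be most delicate, is the quantitative assembly in \ref{it:condition-ii-Thm2-Inner}$\Rightarrow$(i): a single decomposition controls only the one threshold $\varepsilon$, so naive geometric weights across the scales $m$ do \emph{not} by themselves yield a genuinely $c_0$ (relatively weakly compact) image — one really needs a DFJP-style interpolation that converts the countable family of one-sided estimates $\Theta_{m,n}(B_{X^*})\subset c_0+\tfrac1m B_{\ell_\infty}$ into relative weak compactness of $T(B_{X^*})$. Verifying this relative weak compactness, together with the injectivity and $w^*$-to-weak continuity of $T$, is where the bulk of the work lies; the converse direction is comparatively soft once the Eberlein machinery and the continuous-image theorem are invoked.
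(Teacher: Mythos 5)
First, a point of context: the paper does not prove this lemma at all --- it is imported verbatim as \cite[Theorem~2]{FGMZ} --- so there is no internal argument to compare yours with, and I can only judge the proposal on its own terms. Your overall architecture (routing both directions through the Eberlein compactness of $(B_{X^*},w^*)$) is the classically correct one, but each direction has a genuine gap exactly at the step that carries the content of the theorem. In \ref{it:condition-ii-Thm2-Inner}$\Rightarrow$(i), Grothendieck's criterion requires approximating $T(B_{X^*})$ by \emph{relatively weakly compact} sets, not by sets merely contained in a $c_0$-sum. The inclusion $\Theta_{m,n}(B_{X^*})\subset c_0(A_n^{1/m})+\tfrac1m B_{\ell_\infty}$ splits each $\Theta_{m,n}(x^*)$ into a finitely supported part $u_{x^*}$ (supported on $\{x\colon |x^*(x)|>1/m\}$) plus a uniformly small part, but the family $\{u_{x^*}\colon x^*\in B_{X^*}\}$ is only \emph{pointwise} finitely supported, not uniformly so, and a bounded set of finitely supported vectors in $c_0(A)$ (for instance all indicators of finite subsets of $A$) need not be relatively weakly compact. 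So the one-sided estimates do not feed into the Grothendieck criterion; the ``DFJP-style interpolation'' you invoke to repair this is precisely the missing content, and it is not a routine application of DFJP. (A related symptom: condition \ref{it:condition-ii-Thm2-Inner} controls the level sets of $A_n^{1/m}$ only at the single threshold $1/m$, so for small $\delta$ the sets $\{x\in A_n^{1/m}\colon w_{m,n}|x^*(x)|>\delta\}$ are uncontrolled, and no choice of fixed weights makes the assembled image pointwise $c_0$.) What actually closes this direction is either Rosenthal's theorem that a compact space with a $\sigma$-point-finite $T_0$-separating family of open $F_\sigma$ sets is Eberlein, applied to the families $\{x^*\colon \pm x^*(x)>1/m\}$, $x\in A_n^{1/m}$, or the observation that each $A_n^{\varepsilon}$ has $w^*$-closure in $X^{**}$ contained in $X+\varepsilon B_{X^{**}}$ followed by an Amir--Lindenstrauss-type construction; neither is supplied.

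In (i)$\Rightarrow$\ref{it:condition-ii-Thm2-Inner} the gap is the phrase ``arranging \dots\ that the coordinate functionals are realised by evaluations at suitably normalised points $x_\gamma\in B_X$''. A homeomorphic embedding of $(B_{X^*},w^*)$ into $(c_0(\Gamma),\tau_p)$ only provides $w^*$-continuous real functions on the dual ball as coordinates; these are in general not restrictions of elements of $X$, and producing a linearly dense set \emph{inside} $B_X$ with the required decompositions is exactly the hard point of the theorem --- recall that $X$ itself need not be WCG (as the paper notes, citing \cite{Ros74}), so one cannot simply restrict a generating set of the superspace to $X$. This step requires the M-basis/PRI machinery (or a DFJP factorisation through a reflexive space combined with a transfinite construction) and cannot be read off from the Eberlein embedding alone. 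As it stands, both implications reduce the lemma to statements that are essentially equivalent to it, with the reductions left unproved.
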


Note that the equivalence between \ref{it:XSWCG} and \ref{it:PSshrinkSWCG} in Theorem \ref{thm:SWCG} can be deduced also from the methods of proof from \cite[Theorem 24]{FM18}; however, our argument is shorter (using the results proved above) and  condition \ref{it:SPRISWCG} is new.

\begin{thm}\label{thm:SWCG}
Let $X$ be a Banach space and put $\kappa:=\dens(X)$. Then the following conditions are equivalent.
\begin{enumerate}[label = (\roman*)]
    \item\label{it:XSWCG} $X$ is isometric to a subspace of a WCG space.
    \item\label{it:PSshrinkSWCG} There exist a projectional skeleton $\mathfrak{s}=(P_s)_{s\in\Gamma}$ on $X$ and a non-empty set $\A\subset B_X$, satisfying the following conditions:
\begin{enumerate}
    \item\label{it:lindenscountsupportSWCG}   $\A $ is linearly dense in $X$ and countably supports $X^*$;
    \item for every $\varepsilon>0$, there exists a decomposition $\A=\bigcup_{n\in\omega}A_n^{\varepsilon}$ such that $\mathfrak{s}$ is $(A_n^{\varepsilon},\tfrac{\varepsilon}{\Vert x^* \Vert})$-shrinking in $x^*$, for every $n\in\omega$ and every $x^*\in X^* \setminus \{0\}$.
\end{enumerate}
\item\label{it:SPRISWCG} There exist a SPRI $(Q_{\alpha})_{\alpha\leq \kappa}$ in $X$ and a non-empty set $\A\subset B_X$, satisfying the following conditions:
\begin{enumerate}[label=(3\alph*)]
\item $\A $ is linearly dense in $X$ and countably supports $X^*$;
\item\label{it:nicePRISWCG}  for every $x\in\A$ it holds that $\{Q_\alpha(x)\colon \alpha\leq \kappa\}\subset \{0,x\}$;
\item\label{it: PRIAjshrinkSWCG} for every $\varepsilon>0$, there exists a decomposition $\A=\bigcup_{n\in\omega}A_n^{\varepsilon}$ such that $(Q_\alpha)_{\alpha\leq\kappa}$ is $(A_n^{\varepsilon},\tfrac{\varepsilon}{\Vert x^* \Vert})$-shrinking in $x^*$, for every $n\in\omega$ and every $x^*\in X^* \setminus \{0\}$.
\end{enumerate}
\end{enumerate}
\end{thm}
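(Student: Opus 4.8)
The plan is to establish the cycle \ref{it:XSWCG}$\Rightarrow$\ref{it:PSshrinkSWCG}$\Rightarrow$\ref{it:SPRISWCG}$\Rightarrow$\ref{it:XSWCG}, following closely the scheme already carried out for Theorems~\ref{thm:main} and~\ref{thm:Vasak}, but now with Lemma~\ref{l:theorem2-FGMZ} playing the role of the inner characterization. For \ref{it:XSWCG}$\Rightarrow$\ref{it:PSshrinkSWCG} I would take the set $\A\subset B_X$ produced by Lemma~\ref{l:theorem2-FGMZ}. It is linearly dense, and for each $\varepsilon>0$ the set $\{x\in\A\colon |x^*(x)|>\varepsilon\}=\bigcup_n\{x\in A_n^\varepsilon\colon |x^*(x)|>\varepsilon\}$ is a countable union of finite sets, hence countable; letting $\varepsilon\to 0$ shows that $\A$ countably supports $X^*$, so $X$ is Plichko and admits a projectional skeleton $\mathfrak{s}$. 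By Corollary~\ref{cor:plichko} I may pass to a subskeleton with $P_s(x)\in\{0,x\}$ for every $x\in\A$ and $s\in\Gamma$. For each $\varepsilon>0$ I reuse the very decomposition $\A=\bigcup_n A_n^\varepsilon$ from the lemma; the verification that $\mathfrak{s}$ is $(A_n^\varepsilon,\varepsilon/\|x^*\|)$-shrinking in each $x^*$ is then literally the computation from \ref{it:Xwanalytic}$\Rightarrow$\ref{it:PSshrinkwanalytic} of Theorem~\ref{thm:main}: each difference $P_{s_k}^*(x^*)(x)-P_s^*(x^*)(x)$ is $0$ or $x^*(x)$, the finitely many $x\in A_n^\varepsilon$ with $|x^*(x)|>\varepsilon$ are absorbed by weak$^*$ convergence, and the rest contribute at most $\varepsilon$.

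The implication \ref{it:PSshrinkSWCG}$\Rightarrow$\ref{it:SPRISWCG} is trivial when $X$ is separable and is otherwise handed to me by Theorem~\ref{thm:constructionSPRI} applied to $\mathfrak{s}$ and the bounded set $\A$ (which countably supports $D(\mathfrak{s})\subset X^*$). The resulting SPRI $\mathfrak{S}=(Q_\alpha)_{\alpha\le\kappa}$ satisfies \ref{it:nicePRISWCG} by part~\ref{it:niceSPRI} of that theorem, and the decisive point is that the inclusion $\T(\mathfrak{s},\A)\subset\T(\mathfrak{S},\A)$ transports \emph{the same} decompositions: for fixed $\varepsilon$ and each $n$, $x^*$, the triple $(\varepsilon/\|x^*\|,A_n^\varepsilon,x^*)$ lies in $\T(\mathfrak{s},\A)$ and hence in $\T(\mathfrak{S},\A)$, which is exactly condition~\ref{it: PRIAjshrinkSWCG}.

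The substance of the argument is \ref{it:SPRISWCG}$\Rightarrow$\ref{it:XSWCG}. Assuming $Q_\alpha\ne Q_{\alpha+1}$ and writing $T_\alpha=Q_{\alpha+1}-Q_\alpha$, condition~\ref{it:nicePRISWCG} forces $T_\alpha x\in\{0,x\}$ for $x\in\A$, so the linearly dense countable sets $\{v_n^\alpha\colon n\ge1\}\subset T_\alpha[\A]\setminus\{0\}$ chosen in each separable $T_\alpha[X]$ actually lie in $\A\subset B_X$. I then set $\B:=\{v_n^\alpha\colon n\ge1,\ \alpha<\kappa\}$, which is linearly dense in $X$ because $\bigcup_{\alpha<\kappa}T_\alpha[X]$ is, and I verify Lemma~\ref{l:theorem2-FGMZ}\ref{it:condition-ii-Thm2-Inner}. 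Given a target $\delta>0$, I apply \ref{it: PRIAjshrinkSWCG} with $\varepsilon:=\delta/2$ to obtain $\A=\bigcup_m A_m^{\delta/2}$, put $B_{n,m}^\delta:=\{v_n^\alpha\colon\alpha<\kappa\}\cap A_m^{\delta/2}$, and reindex $\{(n,m)\}$ by $\omega$. If some $\{x\in B_{n,m}^\delta\colon |x^*(x)|>\delta\}$ were infinite I would extract a strictly increasing $(\alpha_k)$ with $v_n^{\alpha_k}\in A_m^{\delta/2}$ and $|x^*(v_n^{\alpha_k})|>\delta$; setting $\alpha=\sup_k\alpha_k$ and using that $(Q_\alpha)$ is $(A_m^{\delta/2},(\delta/2)/\|x^*\|)$-shrinking in $x^*$ yields $\rho_{A_m^{\delta/2}}(Q_{\alpha_k}^*x^*,Q_\alpha^*x^*)<\delta$ for large $k$. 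Since $v_n^{\alpha_k}\in Q_{\alpha_k+1}[X]\cap\ker Q_{\alpha_k}$ and $\alpha_k+1\le\alpha$, I get $Q_{\alpha_k}(v_n^{\alpha_k})=0$ and $Q_\alpha(v_n^{\alpha_k})=v_n^{\alpha_k}$, so that this $\rho$ equals $|x^*(v_n^{\alpha_k})|<\delta$, contradicting the choice of $v_n^{\alpha_k}$.

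The implications built on the abstract machinery are essentially routine, so I expect the only real obstacle to be the bookkeeping in the last implication: threading the $\varepsilon$-indexed decompositions of $\A$ into a single countable decomposition of $\B$, and, crucially, getting the normalization right — the absolute threshold $\delta$ demanded by Lemma~\ref{l:theorem2-FGMZ} against the relative shrinking constant $\varepsilon/\|x^*\|$ supplied by \ref{it: PRIAjshrinkSWCG}, which is precisely what the factor-of-two choice $\varepsilon=\delta/2$ reconciles.
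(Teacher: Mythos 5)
Your proposal is correct and follows essentially the same route as the paper's proof: the same transfer of the decomposition from Lemma~\ref{l:theorem2-FGMZ} through Corollary~\ref{cor:plichko} and Theorem~\ref{thm:constructionSPRI}, the same sets $B_{n,m}$ built from the $v_n^\alpha$, and the same $\varepsilon/2$ normalization in the final contradiction argument. No gaps.
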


\begin{proof}
\ref{it:XWCG}$\Rightarrow$\ref{it:PSshrinkSWCG} Let $\A$ be the set given by Lemma \ref{l:theorem2-FGMZ} and note that $\A$ countably supports $X^*$. Since $X$ is Plichko, we have that $X$ admits a projectional skeleton $\mathfrak{s}=(P_s)_{s \in \Gamma}$ and thus it follows from Corollary \ref{cor:plichko} that we may assume without loss of generality that $P_s(x) \in \{0, x\}$, for every $x \in \A$ and $s \in \Gamma$. Now fix $\varepsilon>0$, and let $\A=\bigcup_{n \in \omega} A_n^\varepsilon$ be the decomposition given by Lemma \ref{l:theorem2-FGMZ}. The fact that $\mathfrak{s}$ is $(A_n^\varepsilon,\tfrac{\varepsilon}{\Vert x^* \Vert})$-shrinking in $x^*$, for every $n \in \omega$ and $x^* \in X^* \setminus \{0\}$ can be easily checked using that the set $\{x \in A_n^\varepsilon\colon \vert x^*(x) \vert>\varepsilon\}$ is finite, similarly as argued in the proof of \ref{it:Xwanalytic}$\Rightarrow$ \ref{it:PSshrinkwanalytic} in Theorem~\ref{thm:main}. \\
\ref{it:PSshrinkSWCG}$\Rightarrow$ \ref{it:SPRISWCG} If $X$ is separable, then \ref{it:SPRISWCG} trivially holds and in the case when $X$ is nonseparable, the implication follows directly from Theorem \ref{thm:constructionSPRI}. \\
\ref{it:SPRISWCG}$\Rightarrow$\ref{it:XSWCG} Let $(Q_{\alpha})_{\alpha\leq\kappa}$ and $\A$ be given by (iii). Note that we may assume that $Q_{\alpha}\neq Q_{\alpha+1}$, for every $\alpha<\kappa$.
For every $\alpha < \kappa$, set $T_\alpha:=Q_{\alpha+1}-Q_\alpha$ and note that since $T_\alpha[X]$ is separable and $T_\alpha[\A]$ is linearly dense in $T_\alpha[X]$, there exists a countable set $\{v_n^\alpha\colon n \ge 1\} \subset T_\alpha[\A] \setminus \{0\}$ that is linearly dense in $T_\alpha[X]$. In order to conclude that $X$ is a subspace of a WCG space, we will show that the set $\B:=\bigcup_{\alpha<\kappa}\{v_n^{\alpha}\colon n\in\omega\}$ satisfies condition \ref{it:condition-ii-Thm2-Inner} of Lemma \ref{l:theorem2-FGMZ}. Note that $\B \subset B_X$, since it follows from condition \ref{it:nicePRIWCG} that $T_\alpha[\A] \setminus \{0\} \subset \A$. Moreover the linear density of $\B$ in $X$ follows from the fact that $\B$ is linearly dense in $\bigcup_{\alpha < \kappa} T_\alpha[X]$ and $\bigcup_{\alpha < \kappa} T_\alpha[X]$ is linearly dense in $X$.

Now fix $\varepsilon>0$, set $\varepsilon':=\varepsilon/2$ and let $\A=\bigcup_{n \in \omega} A_n^{\varepsilon'}$ be the decomposition given by \ref{it: PRIAjshrinkSWCG}. For each $n,m \in \omega$, define $B_{n,m}^{\varepsilon}:=A_{m}^{\varepsilon'}\cap \{v_n^{\alpha}: \alpha <\kappa\}$ and note that $\B=\bigcup_{n,m\in\omega}B_{n,m}^{\varepsilon}$, since $\B \subset \A$. In order to conclude the proof, it remains to show that fixed $n, m \in \omega$ and $x^* \in X^* \setminus \{0\}$, the set $\{x\in B_{n,m}^{\varepsilon}\colon |x^*(x)|>\varepsilon\}$ is finite. Assume by contradiction that this set is infinite. In this case, there exits a strictly increasing sequence $(\alpha_k)_{k \in \omega}$ in $\kappa$ such that $v_n^{\alpha_k} \in A_m^{\varepsilon'}$ and $\vert x^*(v_n^{\alpha_k}) \vert >\varepsilon$, for every $k \in \omega$. Since $(Q_\alpha)_{\alpha\le \kappa}$ is $(A_m^{\varepsilon'}, \tfrac{\varepsilon'}{\Vert x^* \Vert})$-shrinking in $x^*$, arguing as in the proof of \ref{it:SPRIwanalytic} $\Rightarrow$ \ref{it:Xwanalytic} in Theorem~\ref{thm:main}, we achieve a contradiction and establish our implication.
\end{proof}

\section{Open problems and remarks}\label{sec:questions}

As mentioned previously, we found the relationship between SPRI and projectional skeletons quite interesting. Observe that combining an inductive argument with Proposition \ref{prop:pri} and a result similar to \cite[Proposition 6.2.7]{FabRedBook} (just replacing the assumption that the sequence of projections is a PRI by the assumption that it satisfies conditions \ref{it:P0-Pkappa}, \ref{it:comutative}, \ref{it:upDirectedLimit},  \ref{it:skeletonOnSubset} and \ref{it:subskeletonOnDifferences} of Proposition \ref{prop:pri}), one can easily construct a SPRI in any Banach space with a projectional skeleton.
We would like to understand better the properties of the SPRI built from a given projectional skeleton. In particular, this could be useful when dealing with questions proposed in \cite[Section 6]{kal20}, where the author already obtained many deep results concerning the connection between projectional skeletons and PRI. As an example of a problem, we suggest the following.

\begin{problem}
Find a property $(P)$ of SPRI such that a Banach space $X$ has a projectional skeleton if and only if it admits a SPRI satisfying $(P)$.
\end{problem}

It is important to note that the methods used in the present paper depend heavily on Corollary~\ref{cor:plichko}, that is, on the fact that we work with subclasses of Plichko spaces. We observe that given a Plichko space $X$, it follows from \cite[Theorem~3.1]{kal20} that $X$ admits a projectional skeleton $\mathfrak{s}=(P_s)_{s \in \Gamma}$ and a a Markushevich basis $(x_i,x_i^*)_{i\in I}$ such that $\{x_i\colon i \in I\}$ countably supports $D(\mathfrak{s})$ and therefore Theorem \ref{thm:constructionSPRI} ensures that $X$ admits a SPRI $(Q_\alpha)_{\alpha \le \dens X}$ such that $Q_\alpha[\{x_i\colon i \in I\}] \subset \{x_i\colon i \in I\} \cup \{0\}$, for every $\alpha \le \dens X$. In this context, it is natural to propose Question \ref{q:Mbasis-invariant-skeleton}, that seems to be related to \cite[Question 6.3]{kal20}.

\begin{question}\label{q:Mbasis-invariant-skeleton}
Let $X$ be a Banach space with a projectional skeleton. Does $X$ admit a Markushevich basis $(x_i,x_i^*)_{i\in I}$ and a SPRI $(Q_\alpha)_{\alpha\leq \dens X}$ such that $Q_\alpha[\{x_i\colon i\in I\}]\subset \{x_i\colon i\in I\} \cup \{0\}$, for every $\alpha\leq \dens X$?
\end{question}

A final suggestion for further research is to characterize other subclasses of Plichko spaces that were considered in \cite{FGMZ}. For instance, we suggest the following.

\begin{problem}
To characterize the class of Hilbert generated spaces using projectional skeletons and SPRI.
\end{problem}

\noindent\textbf{Acknowledgments.}\enspace The authors would like to thank Marián Fabian for carefully reading this manuscript and making valuable suggestions.

\bibliography{ref}{}
\bibliographystyle{siam}

\end{document}